\documentclass{amsart}
\usepackage{mathrsfs}
\usepackage{amsfonts,amssymb,amsmath,amsthm}
\usepackage{url}
\usepackage{enumerate}
\usepackage{float}
\usepackage{subfig, epsfig}
\usepackage{graphicx}

\urlstyle{sf}
\newtheorem{thm}{Theorem}[section]
\newtheorem{lem}[thm]{Lemma}
\newtheorem{prop}[thm]{Proposition}

\newtheorem{remark}[thm]{Remark}
\numberwithin{equation}{section}

 %%%% support fuction of E
 %%%%  curvature function of E
  %%%% curvature function of K
\newcommand{\R}{\mathbb{R}}

\allowdisplaybreaks

\author{Ningkui Sun}
\address{School of Mathematics and Statistics\\
Shandong Normal University\\
 Jinan 250014, China}
 \email{sunnk1987@163.com}

\author{Jian Fang}
\address{
Institute for Advanced Studies in Mathematics and Department of Mathematics\\
 Harbin Institute of Technology\\
 Harbin 150001, China}
 \email{jfang@hit.edu.cn}

\keywords{reaction-diffusion equation, free boundary, time delay, spreading phenomena}
\subjclass[2010]{35K57, 35R35, 35B40, 92D25}

\thanks{The research leading to these results was supported by the NSF of China (No.11771108),
 the NSF of Heilongjiang province of China (No. LC2017002) and the NSF of Shandong Province of China (No. ZR201702140024).}

%\thanks{The corresponding author: Yunlong Yang}

\begin{document}

\title[Fisher-KPP equation with time delay]
{Propagation dynamics of Fisher-KPP equation with time delay and free boundaries}

\begin{abstract}
Incorporating free boundary into time-delayed reaction-diffusion equations yields a compatible condition that guarantees  the well-posedness of the initial value problem. With the KPP type nonlinearity we then establish a vanishing-spreading dichotomy result. Further, when the spreading happens, we show that the spreading speed and spreading profile are nonlinearly determined by a delay-induced nonlocal semi-wave problem. It turns out that time delay slows down the spreading speed.
\end{abstract}

\maketitle

\section{Introduction}\label{sec:intr}
In the pioneer work of Fisher \cite{Fisher}, and  Kolmogorov, Petrovski and Piskunov \cite{KPP}, it was shown that
\begin{equation}\label{eq:KPP}
u_t =u_{xx}+f(u), \quad x\in\R
\end{equation}
with
\begin{equation}\label{eq:KPP-cond}
f\in C^1(\R,\R),\quad f(0)=0=f(1),\quad f(s)\leqslant f'(0)s,\ s\geqslant 0,
\end{equation}
admits traveling waves solutions of the form $u(t,x)=\phi(x-ct)$ satisfying $\phi(-\infty)=1$ and $\phi(+\infty)=0$ if and only if $c\geqslant c_0:=2\sqrt{f'(0)}$. In 1970s', Aronson and Weinberger \cite{AW2} proved that the minimal wave speed $c_0$ is also the asymptotic speed of
spread (spreading speed for short) in the sense that
\begin{equation}
\lim_{t\to\infty} \sup_{|x|\geqslant (c_0+\epsilon)t}u(t,x)=0,\quad \lim_{t\to\infty} \inf_{|x|\leqslant (c_0-\epsilon)t}u(t,x)=1
\end{equation}
for any small $\epsilon>0$ provided that the initial function $u(0,x)$ is compactly supported. These works have stimulated volumes of studies
for the propagation dynamics of various types of evolution systems. Among others, of particular interest to the Fisher-KPP equation
\eqref{eq:KPP}-\eqref{eq:KPP-cond} with time delay or free boundary are two typical ones.

Schaaf \cite{Sc} studied the following delayed reaction-diffusion equation
\begin{equation}\label{eq:Schaaf}
u_t(t,x)=u_{xx}(t,x)+f(u(t,x),u(t-\tau,x)),\quad x\in\R,\ t>0,
\end{equation}
where $\tau>0$ is the time delay. With the Fisher-KPP condition on $\tilde{f}(s):=f(s,s)$ and  the quasi-monotone condition $\partial_2 f\geqslant 0$,
it was shown that the minimal wave speed $c_0=c_0(\tau)$ exists and it is determined by the system of two transcendental equations
\begin{equation}\label{eq:speed}
F(c,\lambda)=0, \quad \frac{\partial F}{\lambda}(c,\lambda)=0,
\end{equation}
where
\begin{equation}
F(c,\lambda)=\lambda^2+c\lambda+\partial_1 f(0,0)+\partial_2 f(0,0)e^{-\lambda\tau}.
\end{equation}
The delay-induced spatial non-locality was brought to attention by So, Wu and Zou \cite{SWZ}, where they derived the following time-delayed reaction-diffusion model equation with nonlocal response for the study of age-structured population
\begin{equation}\label{eq:SWZ}
u_t=u_{xx}-d u+\gamma \int_\R b(u(t-\tau,x-y))k(y)dy,\quad x\in\R,\ t>0,
\end{equation}
where $u$ represents the density of mature population, $\tau>0$ is the maturation age, $d$ is the death rate, $b$ is the birth rate function, $\gamma$ is the survival rate from newborn to being mature, and $k$ is the redistribution kernel during the maturation period. As such,  introducing time delay into diffusive equation usually gives rises to spatial non-locality due to the interaction of time lag (for maturation) and diffusion of immature population. In the extreme case where the immature population does not diffuse, the kernel $k$ becomes the Dirac measure, and hence \eqref{eq:SWZ} reduces to \eqref{eq:Schaaf}. We refer to the survey article \cite{GW} for the delay-induced nonlocal reaction-diffusion problems. In \cite{SWZ}, the authors obtained the minimal wave speed $c_0(\tau)$ that is determined by a similar system to \eqref{eq:speed} provided that $b$ is nondecreasing and $f(s):=-ds+b(s)$ is of Fisher-KPP type. Wang, Li and Ruan \cite{WLR} proved that $c_0(\tau)$ is decreasing in $\tau$. Liang and Zhao \cite{LZ} showed that $c_0(\tau)$ is also the spreading speed for the solutions satisfying the following initial condition
\begin{equation}
\text{$u(\theta,x)$ is continuous and compactly supported in $\theta\in [-\tau,0]$ and $x\in\R$.}
\end{equation}
Similar to the classical Fisher-KPP equation, the spreading speed $c_0(\tau)$ for delayed reaction-diffusion equation is still linearly determined for both local and nonlocal problems thanks to the Fisher-KPP type condition.

We refer to \cite{MS} for more properties that are induced by time delay in reaction-diffusion equations, including the well-posedness of initial value problems as well as the role of the quasi-monotone condition on the comparison principle, and \cite{FangZhao14, FangZhao15} for the delay-induced weak compactness of time-$t$ solution maps when $t\in(0,\tau]$ as well as its role in the study of wave propagation.

Recently, Du and Lin \cite{DuLin} proposed a Stefan type free boundary to the Fisher-KPP equation
\begin{equation}\label{freeb}
\left\{
\begin{array}{ll}
 u_t =u_{xx}+u(1-u), &  g(t)< x<h(t),\; t>0,\\
 u(t,g(t))=0,\ \ g'(t)=-\mu u_x(t, g(t)), & t>0,\\
 u(t,h(t))=0,\ \ h'(t)=-\mu  u_x  (t, h(t)) , & t>0,
\end{array}
\right.
\end{equation}
where the free boundaries $x=g(t)$ and $x=h(t)$
represent the spreading fronts, which are determined jointly  by the gradient at the fronts and the coefficient $\mu$ in the Stefan condition. For more background of proposing such free boundary conditions, we refer to \cite{DuLin, BDK}. It was proved in \cite{DuLin} that
the unique global solution $(u,g,h)$ has a spreading-vanishing dichotomy property as $t\to\infty$: either $(g(t),h(t))\to\R$ and $u\to1$ (spreading case), or $g(t)\to g_\infty$, $h(t)\to h_\infty$ with $h_\infty-g_\infty\leqslant \pi$, and $u\to 0$ (vanishing case). Moreover,
it was also proved that when spreading happens, there is a constant $k_0>0$ such that $-g(t)$ and $h(t)$ behave like a straight line $k_0t$ for large time, where $k_0$ is called the asymptotic speed of spread (spreading speed for short). Different from the classical Fisher-KPP speed, $k_0$ is the unique value of $c$ such that the following nonlinear semi-line problem is solvable:
\begin{equation}\label{k0}
\left\{
 \begin{array}{ll}
 q'' - cq'+q(1-q)=0, & z>0,\\
 q(\infty)=1, \ \ \mu q_+'(0)=c,\ \ q(z)>0, & z\leqslant 0,\\
 q(z)=0,  & z\leqslant 0,
 \end{array}
 \right.
\end{equation}
where $q_+'(0)$ is the right derivative of $q(z)$ at $0$.
In particular, as $\mu$ increases to infinity, $k_0$ increases to the classical Fisher-KPP speed $2\sqrt{f'(0)}$.
Later on, Du and Lou \cite{DuLou} obtained a rather complete characterization on the asymptotic behavior of solutions for \eqref{freeb} with
some general nonlinear terms. For further related work on free boundary problems,
we refer to \cite{DuGuo, DGP, DMZ} and the references therein.

In this paper, we aim to explore how to incorporate time delay and free boundary into the Fisher-KPP equation \eqref{eq:KPP}-\eqref{eq:KPP-cond} so that the problem is well-posed, and then study their joint influence on the propagation dynamics.

Keeping a smooth flow for the organizations of the paper, we write down here the problem of interest while leaving in the next section  the derivation details, including the emergence of the compatible condition \eqref{CC} for the well-posedness of the initial value problem.
\begin{equation}\label{p}
\left\{
\begin{array}{ll}
 u_t(t,x) =u_{xx}(t,x)- d u(t,x) +f(u(t-\tau,x)), &  x\in(g(t),h(t)),\; t>0,\\
 u(t,g(t))=0,\ \ g'(t)=-\mu u_x(t, g(t)), & t>0,\\
 u(t,h(t))=0,\ \ h'(t)=-\mu  u_x  (t, h(t)) , & t>0,\\
 u(\theta,x) =\phi (\theta,x),& g(\theta) \leqslant x \leqslant h(\theta),\; \theta\in[-\tau,0],
\end{array}
\right.
\tag{$P$}
\end{equation}
where $d$ and $\tau$ are two positive constants, the nonlinear function $f$ satisfies
\[
\bf{(H)}\  \hskip 16mm
\left\{
 \begin{array}{l}
  f(s)\in C^{1+\tilde{\nu}}([0,\infty))\ \mbox{ for some } \tilde{\nu}\in(0,1),\ \ f(0)=0,\ \ f'(0)-d>0;\\
 f(s)-d s=0 \mbox{ has a unique positive constant root } u^*;\\
   f(s) \mbox{ is monotonically increasing in } s \in[0,u^*];\\
   \frac{f(s)}{s} \mbox{ is monotonically decreasing in } s \in[0,u^*]
  \end{array}
 \right. \ \hskip 15mm \hfill
\]
and the initial data $(\phi(\theta,x), g(\theta), h(\theta))$ satisfies
\begin{equation}\label{def:X}
\left\{
\begin{array}{ll}
\phi(\theta,x) \in C^{1,2} ([-\tau,0]\times[g(\theta),h(\theta)]),\\
0<\phi(\theta,x)\leqslant u^*\ \mbox{ for } (\theta,x)\in[-\tau,0]\times(g(\theta), h(\theta)),\\
\phi(\theta,x) \equiv 0\ \ \mbox{ for } \theta\in[-\tau,0],\; x\not\in(g(\theta),h(\theta))
\end{array}
\right.
\end{equation}
as well as the compatible condition
\begin{equation}\label{CC}
[g(\theta), h(\theta)]\subset [g(0), h(0)]\ \ \ \mbox{ for }\ \theta\in[-\tau,0].
\end{equation}

Assumption {\bf (H)} ensures the Fisher-KPP structure as well as the comparison principle. Due to the nature of delay differential equations,
the initial value, including the initial domain, has to be imposed over the history period $[-\tau,0]$, as in \eqref{def:X}. The interaction of time delay and free boundary gives rise to the compatible condition \eqref{CC} that is essential for the well-posedness of the problem. If $\tau=0$, then the compatible condition (1.12) becomes trivial and problem (P) reduces to \eqref{freeb}.

\begin{thm}\label{wellposedness}
{\rm(Well-posedness)} For an initial data $(\phi(\theta,x), g(\theta), h(\theta))$ satisfying \eqref{def:X} and \eqref{CC},  there exists a
unique triple $(u, g, h)$ solving \eqref{p} with $u\in C^{1,2}((0,\infty) \times[g(t),h(t)])$ and $g,\, h\in C^1([0,\infty))$.
\end{thm}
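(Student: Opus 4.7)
The plan is to use the classical method of steps, solving \eqref{p} successively on the intervals $I_k := [(k-1)\tau, k\tau]$, $k\geqslant 1$. On each $I_k$, the delay nonlinearity $f(u(t-\tau,x))$ is already determined by the solution on $I_{k-1}$ (or, for $k=1$, by $\phi$), so \eqref{p} reduces on $I_k$ to a Stefan free-boundary problem with a \emph{given} source term, and the delay is absent.

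For the base case $k=1$, I would extend $\phi(\theta,\cdot)$ by $0$ outside $[g(\theta),h(\theta)]$ and set $F(t,x) := f(\phi(t-\tau,x))$ on $[0,\tau]\times\R$. Since $f(0)=0$ and $\phi$ is continuous up to the lateral boundary, $F$ is continuous, and vanishes for $x\notin[g(t-\tau),h(t-\tau)]$. By the compatibility condition \eqref{CC} combined with the monotonicity $g'\leqslant 0\leqslant h'$ forced by the Stefan condition via the Hopf boundary point lemma, one has $[g(t-\tau),h(t-\tau)]\subset[g(0),h(0)]\subset[g(t),h(t)]$, so $F$ makes sense as a source on the unknown domain $(g(t),h(t))$. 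I would then solve the non-delayed problem
\[
u_t = u_{xx} - d u + F(t,x), \quad g(t)<x<h(t),\ 0<t\leqslant\tau,
\]
with the Stefan conditions and initial data $\phi(0,\cdot)$ by adapting the strategy of Du--Lin \cite{DuLin}: straighten the boundaries via $y=(2x-g(t)-h(t))/(h(t)-g(t))$, converting the PDE into a quasilinear parabolic equation on the fixed domain $[-1,1]$ with coefficients depending on $(g,h,g',h')$, and apply the contraction mapping principle in a ball of parabolic H\"older triples $(u,g,h)\in C^{(1+\nu)/2,1+\nu}\times C^{1+\nu/2}\times C^{1+\nu/2}$ on a short time interval $[0,T_0]$, yielding a local $C^{1,2}$ solution.

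To extend to the full interval $[0,\tau]$, I would use comparison with the constant supersolution $u\equiv u^*$ (valid because by \textbf{(H)} we have $-du^*+F\leqslant -du^*+f(u^*)=0$ since $\phi\leqslant u^*$ and $f$ is increasing on $[0,u^*]$) to obtain $0\leqslant u\leqslant u^*$, from which a uniform bound on the Stefan derivatives $|g'|,|h'|$ follows; standard parabolic estimates then preclude blow-up and allow continuation to $[0,\tau]$. The inductive step $I_k\to I_{k+1}$ is identical: the solution on $I_k$ is known, the delay source $f(u(t-\tau,x))$ on $I_{k+1}$ is again a continuous source on $(g(t),h(t))$ (monotonicity of $(g,h)$ ensuring $[g(t-\tau),h(t-\tau)]\subset[g(t),h(t)]$), and the initial data $u(k\tau,\cdot)\in C^2([g(k\tau),h(k\tau)])$ vanishes at the boundary, so the argument of the base case applies. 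Uniqueness on each $I_k$ is built into the contraction, and global uniqueness follows by concatenation.

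The main obstacle is the regularity of the source $F$ in the base case: because $\phi_x(\theta,g(\theta))$ and $\phi_x(\theta,h(\theta))$ need not vanish, $F$ has a jump in its first spatial derivative across the internal curves $x=g(t-\tau)$ and $x=h(t-\tau)$, so one cannot hope for $F\in C^{1+\nu}$ in $x$. However, since $F$ is continuous and vanishes on those curves, it is in fact H\"older of every exponent $\nu<\tilde\nu$ globally, which is precisely what the parabolic Schauder estimates used in the fixed-point argument require; combined with $\phi(0,g(0))=\phi(0,h(0))=0$ from \eqref{def:X}, no compatibility obstruction arises at the initial corners. The condition \eqref{CC} is indispensable here: it is what prevents $\phi(t-\tau,\cdot)$ from having positive mass outside the current domain $[g(t),h(t)]$ for some $t\in[0,\tau]$, a configuration that would be inconsistent with the expanding free-boundary geometry and would make the fixed-point setup ill-posed.
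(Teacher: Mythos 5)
Your proposal is essentially correct, and it rests on the same central observation as the paper: on any time window of length at most $\tau$, the delay term $f(u(t-\tau,x))$ is a \emph{prescribed} source (equal to $f(\phi(t-\tau,x))$ on the first window, extended by zero), so \eqref{p} reduces there to a non-delayed Stefan problem, which is then solved by straightening the free boundary and running a fixed-point argument in parabolic H\"older triples. The differences from the paper are largely organizational and interchangeable: you iterate explicitly over the delay steps $I_k=[(k-1)\tau,k\tau]$, whereas the paper proves a local existence theorem (Theorem~\ref{thm:local}) only for $T\leqslant\tau$ and then passes to $T_{\max}=\infty$ by a contradiction argument (Lemma~\ref{lem:global}); you use the linear straightening $y=(2x-g(t)-h(t))/(h(t)-g(t))$, the paper uses the Chen--Friedman cutoff straightening; you contract directly, the paper applies the Schauder fixed point theorem for existence and then proves uniqueness by a separate contraction-type estimate in its Step~3 (which is where $T\leqslant\tau$ kills the delayed difference $f(w_1(t-\tau))-f(w_2(t-\tau))$); you work in H\"older spaces throughout, the paper first uses $W^{1,2}_p$ estimates and Sobolev embedding (which tolerates the merely bounded/Lipschitz source without fuss) and only then upgrades to $C^{1+\alpha/2,2+\alpha}$ away from $t=0$.

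Two small cautions. First, the statement ``from which a uniform bound on the Stefan derivatives $|g'|,|h'|$ follows'' from $0\leqslant u\leqslant u^*$ is not automatic; it is the one non-perturbative estimate and requires a localized barrier at the free boundary (the quadratic comparison function $\overline u=u^*\bigl[2M(h(t)-x)-M^2(h(t)-x)^2\bigr]$ as in Du--Lin, used verbatim in the paper's Lemma~\ref{lem:global}), so this should be spelled out. Second, your last sentence overstates the analytic role of \eqref{CC}: once $\phi$ is extended by zero, the fixed-point problem you set up is perfectly well-posed with or without \eqref{CC}; what \eqref{CC} guarantees is that $[g(t-\tau),h(t-\tau)]\subset[g(t),h(t)]$ on $[0,\tau]$, and hence that the single-equation form \eqref{p} actually coincides with the two-regime system \eqref{p-nonsim} derived from the age-structured model. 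It is a model-compatibility condition ensuring that no delayed birth is ``lost'' outside the current habitat, not a condition needed for your contraction to close.
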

With the compatible condition  \eqref{CC} we can cast the problem into a fixed boundary problem and then apply the Schauder fixed point
theorem to establish the local existence of solutions. The extension to all positive time is based on some a priori estimates\footnote{We
sincerely thank Professor Avner Friedman for his valuable comments and suggestions on the proof of the well-posedness.}.

From the maximum principle and {\bf (H)}, it follows that when $t>0$ the solution $u>0$ as $x\in (g(t),h(t))$, $u_x(t,g(t))>0$ and $u_x(t,h(t))<0$, and hence, $g'(t)<0<h'(t)$ for all $t>0$. Therefore, we can denote
$$
g_{\infty}:=\lim_{t\to\infty}g(t)\ \ \ \mbox{and }\quad h_{\infty}:= \lim_{t\to\infty}h(t).
$$

\begin{thm}{\rm (Spreading-vanishing dichotomy)}\label{thm:asy be}
Let $(u,g,h)$ be the solution of \eqref{p}. Then the following alternative holds:

Either

{\rm (i) Spreading:} $(g_\infty, h_\infty)=\R$ and
\[
\lim_{t\to\infty}u(t,x)=u^* \mbox{ locally uniformly in $\R$},
\]

or

{\rm (ii) Vanishing:} $(g_\infty, h_\infty)$ is a finite interval
with length no bigger than $\frac{\pi}{\sqrt{f'(0)-d}}$ and
\[
\lim_{t\to\infty}\max_{g(t)\leqslant x\leqslant h(t)} u(t,x)=0.
\]
\end{thm}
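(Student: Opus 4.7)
The plan is to adapt the Du--Lin dichotomy to the delayed free boundary problem, using the quasi-monotonicity of $f$ on $[0,u^*]$ guaranteed by (H) and the compatibility \eqref{CC}. Comparison with the constant solution $u\equiv u^*$ of the spatially homogeneous delay ODE gives $u\leqslant u^*$ globally, and standard Schauder estimates up to the moving boundary (as in the proof of Theorem~\ref{wellposedness}) yield uniform $C^{1+\alpha/2,2+\alpha}$-bounds for $u$ on compact time-slabs. In particular, translates $u_n(t,x):=u(t+t_n,x)$ are precompact on compact subsets of the asymptotic domain, and the dichotomy reduces to analysing the length $\ell:=h_\infty-g_\infty$.

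\emph{Case $\ell=\infty$.} I first rule out the asymmetric possibility $h_\infty<\infty$, $g_\infty=-\infty$ (the reverse being analogous). Since $h$ is monotone and bounded, $h'\in L^1(0,\infty)$; the regularity bound makes $h'$ uniformly continuous, so $h'\to 0$ and $u_x(t,h(t))\to 0$. Any subsequential limit $U$ of $u(t_n+\cdot,\cdot)$ on $\R\times(-\infty,h_\infty)$ satisfies the delayed equation together with $U(t,h_\infty)=U_x(t,h_\infty)=0$; the strong maximum principle for delay equations (valid under the quasi-monotonicity from (H)) together with Hopf's boundary point lemma forces $U\equiv 0$, so $u(t,x)\to 0$ for every fixed $x\in(-\infty,h_\infty)$. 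But since $g_\infty=-\infty$, one may choose $a$ very negative and $b<h_\infty$ with $b-a>\pi/\sqrt{f'(0)-d}$ so that $[a,b]\subset(g(t),h(t))$ eventually; the delayed Dirichlet problem on $[a,b]$ then admits a unique positive steady state $V^*$ (by monotone iteration, with uniqueness coming from the decreasing quotient $f(s)/s$), and quasi-monotone comparison with the fixed-boundary problem on $[a,b]$ yields $\liminf_{t\to\infty}u(t,\cdot)\geqslant V^*>0$ on compacts of $(a,b)$, a contradiction. Hence $h_\infty=\infty$, and analogously $g_\infty=-\infty$. With $(g_\infty,h_\infty)=\R$, the same comparison on $[-R,R]$ for any $R>\tfrac{1}{2}\pi/\sqrt{f'(0)-d}$ yields $\liminf u\geqslant V_R$ locally uniformly, where $V_R$ is the positive Dirichlet steady state on $[-R,R]$; as $R\to\infty$, $V_R$ increases to the unique positive bounded entire steady state $u^*$, and combined with $u\leqslant u^*$ this gives $u\to u^*$ locally uniformly.

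\emph{Case $\ell<\infty$.} Then $g_\infty,h_\infty\in\R$ automatically, since a finite $h_\infty-g_\infty$ is incompatible with either endpoint being infinite. The same Hopf-based argument, applied now at both $g_\infty$ and $h_\infty$, forces every $\omega$-limit $U$ on the bounded interval $(g_\infty,h_\infty)$ to satisfy zero Dirichlet and zero Neumann data at both ends, hence $U\equiv 0$ and $\max_{x\in[g(t),h(t)]}u(t,x)\to 0$. To establish the length bound $\ell\leqslant\pi/\sqrt{f'(0)-d}$, suppose to the contrary $\ell>\pi/\sqrt{f'(0)-d}$: then $h(T)-g(T)$ exceeds the threshold for $T$ large, the delayed Dirichlet problem on $[g(T),h(T)]$ admits a positive steady state, and quasi-monotone comparison again gives $\liminf u>0$ on compacts, contradicting the vanishing just established. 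The main obstacle throughout is the delay-induced failure of the classical exponential subsolution $\varepsilon e^{\sigma t}\phi_1(x)$: the KPP inequality $f(s)\leqslant f'(0)s$ places the quadratic correction of $f$ near $0$ on the wrong side, so the naive ansatz does not give a subsolution; I sidestep this by comparing instead with fixed-domain delayed Dirichlet problems, whose convergence to a positive steady state is guaranteed precisely once the interval length exceeds $\pi/\sqrt{f'(0)-d}$.
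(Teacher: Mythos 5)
Your proposal is correct in its logical structure and conclusions, but it reaches the dichotomy by a noticeably different route from the paper's.

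The paper first proves a freestanding lemma (Lemma~4.1) equating three conditions — one endpoint finite, $h_\infty-g_\infty\leqslant\pi/\sqrt{f'(0)-d}$, and $\max u\to0$ — via two explicit subsolution constructions: an auxiliary one-sided Stefan problem on $(g(t_1),\xi(t))$ whose free boundary cannot remain bounded once a positive stationary profile is approached, and the static eigenfunction subsolution $w(x)=\epsilon\varphi(x)$ on an interval of length just above the threshold. The dichotomy then drops out of this lemma together with a fixed-domain monotone comparison (Lemma~4.3). You instead dichotomize directly on $\ell=h_\infty-g_\infty$ and, in the bounded case, replace both subsolution constructions with a Barb\u{a}lat-plus-Hopf argument: $h$ bounded increasing and $h'$ uniformly H\"older gives $h'\to0$, hence $u_x(t,h(t))\to0$ by the Stefan condition, hence every $\omega$-limit $U$ satisfies $U(t,h_\infty)=U_x(t,h_\infty)=0$ and so $U\equiv0$ by Hopf's lemma; spreading and the length bound are both obtained via fixed-domain delayed Dirichlet comparison, invoking the attractivity of the positive steady state (the paper's reference [YCW, Prop.\ 2.9]). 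This is a valid, somewhat more dynamical argument. It buys you the elimination of the auxiliary free-boundary subsolution, at the cost of needing to justify uniform-in-time H\"older bounds on $h'$ (which the paper sets up carefully only later, in Lemma~5.4) and of outsourcing more of the work to the \cite{YCW} convergence theorem.

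One claim in the final paragraph is mistaken and worth correcting. For a \emph{sub}solution near $u=0$ one needs a lower linear bound on $f$, and this is furnished by $f(s)/s\to f'(0)$ as $s\to0^+$; the KPP upper bound $f(s)\leqslant f'(0)s$ is irrelevant to that inequality, so nothing is ``on the wrong side.'' The paper's choice $w=\epsilon\varphi$ is \emph{time-independent}, so the delay is invisible ($w(t-\tau,x)=w(t,x)$), and the required estimate $f(\epsilon\varphi)\geqslant\big(f'(0)+\tfrac12\lambda_1\big)\epsilon\varphi$ with $\lambda_1<0$ holds for $\epsilon$ small. Even the exponential ansatz $\epsilon e^{\sigma t}\varphi$ survives the delay for small $\sigma>0$, since the resulting factor $e^{-\sigma\tau}$ still leaves room below $f'(0)$. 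So the obstacle you describe is not present, and your detour through fixed-domain problems, while sound, is not forced upon you.
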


\medskip

When spreading happens, we characterize the spreading speed and profile of the solutions. The nonlinear and nonlocal semi-wave problem
\begin{equation}\label{sw11}
\left\{
 \begin{array}{ll}
 q'' - cq'-d q+ f( q(z-c\tau))=0, & z>0,\\
 q(\infty)=u^*, \ \ \mu q_+'(0)=c,\ \ q(z)>0, & z\leqslant 0,\\
 q(z)=0,  & z\leqslant 0
 \end{array}
 \right.
\end{equation}
will play an important role. If $\tau=0$ then \eqref{sw11} reduces to the local form \eqref{k0}.

\begin{thm}\label{thm:semiwave}
Problem \eqref{sw11} admits a unique solution $(c^*, q_{c^*})$ and $c^*=c^*(\tau)$ is decreasing in delay $\tau\geqslant 0$.
\end{thm}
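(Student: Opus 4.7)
The plan is to decouple the two constraints in \eqref{sw11}. For each fixed $c$ in an admissible range $(0,\hat c(\tau))$ I would first build a monotone increasing profile $q_c$ on $[0,\infty)$ solving the delayed ODE with $q_c(0)=0$ and $q_c(\infty)=u^*$, ignoring the Stefan relation; then $c^*$ is selected by an intermediate value argument applied to $\Psi_\tau(c):=\mu q'_{c,+}(0)-c$. The threshold $\hat c(\tau)$ is the minimal speed of full-line monotone traveling waves, determined (as in \cite{Sc,SWZ}) by the double-root condition on the characteristic function
\[
\Phi(c,\lambda)\;:=\;\lambda^2-c\lambda-d+f'(0)e^{-\lambda c\tau}.
\]

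For the existence of $q_c$, I would fix $c\in(0,\hat c(\tau))$ and run monotone iteration on a truncated interval $[0,L]$ between the supersolution $u^*$ and a subsolution built from the exponential profile $\varepsilon e^{\lambda z}$ suggested by the positive root $\lambda=\lambda(c,\tau)$ of $\Phi(c,\cdot)=0$, cut off near $z=0$ so as to vanish there. Interior Schauder estimates allow passage $L\to\infty$ to a monotone $q_c$ on $[0,\infty)$ with $q_c(0)=0$; boundedness plus monotonicity force $q_c(+\infty)$ to be a nonnegative equilibrium, and the Fisher-KPP condition (monotonicity of $f(s)/s$ on $[0,u^*]$) rules out $0$, so $q_c(\infty)=u^*$. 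Uniqueness of $q_c$ for each fixed $c$ then follows from a sliding argument that exploits the strict decrease of $f(s)/s$ in the spirit of Berestycki--Hamel.

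To select $c^*$, continuity of $c\mapsto q'_{c,+}(0)$ is immediate from parameter dependence of the fixed-point construction. A sliding argument gives strict monotonicity: if $c_1<c_2$ then $q_{c_1}$ is a strict supersolution of the $c_2$-semi-wave problem, hence $q_{c_1}\ge q_{c_2}$ on $[0,\infty)$ with common zero at $0$, forcing $q'_{c_1,+}(0)>q'_{c_2,+}(0)$. As $c\downarrow 0$, $q'_{c,+}(0)$ stays bounded away from $0$ by comparison with the half-line stationary problem, while as $c\uparrow\hat c(\tau)$ a limit profile must coincide with the critical full-line wave and thus $q'_{c,+}(0)\to 0$. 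Therefore $\Psi_\tau$ is continuous, strictly decreasing, positive for small $c$ and negative near $\hat c(\tau)$, so it has a unique zero $c^*=c^*(\tau)\in(0,\hat c(\tau))$. For the monotonicity in $\tau$, take $0\le\tau_1<\tau_2$ and a common $c>0$; monotonicity of $q_{c,\tau_1}$ and of $f$ on $[0,u^*]$ yield $f(q_{c,\tau_1}(z-c\tau_2))<f(q_{c,\tau_1}(z-c\tau_1))$, so $q_{c,\tau_1}$ is a strict supersolution of the $\tau_2$-semi-wave problem. Comparison then gives $q'_{c,\tau_1,+}(0)>q'_{c,\tau_2,+}(0)$; evaluating at $c=c^*(\tau_2)$ produces $\Psi_{\tau_1}(c^*(\tau_2))>0$, and the strict monotonicity of $\Psi_{\tau_1}$ forces $c^*(\tau_1)>c^*(\tau_2)$.

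The main technical obstacle is the semi-wave construction in Step~1: the shift $q(z-c\tau)$ vanishes for $z<c\tau$, so the exponential sub/supersolution bookkeeping on $[0,\infty)$ must be carried out carefully across the threshold $z=c\tau$, and the monotonicity/comparison machinery must be set up so that it applies uniformly to all $c$ in the admissible range. Once that machinery is in place, the selection of $c^*$ and the monotonicity $c^*(\tau_1)>c^*(\tau_2)$ are relatively clean consequences of the Fisher-KPP structure and the sliding method.
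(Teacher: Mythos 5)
Your overall skeleton (decouple into a profile construction plus a Stefan-condition IVT, then sliding/monotone arguments for uniqueness and monotonicity in $\tau$) is the same as the paper's. However, there is a serious gap in the existence step for $q_c$, and this is precisely the place where the time delay creates genuine new difficulty.

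You propose to build, for $c\in(0,\hat c(\tau))$, a subsolution from the exponential profile $\varepsilon e^{\lambda z}$ using ``the positive root $\lambda=\lambda(c,\tau)$ of $\Phi(c,\cdot)=0$.'' But for the semi-wave problem the relevant regime is $c<c_0(\tau)$, and in this regime $\Phi(c,\lambda)=\lambda^2-c\lambda-d+f'(0)e^{-\lambda c\tau}$ has \emph{no} positive real root: $\Phi(c,0)=f'(0)-d>0$, $\Phi$ is convex in $\lambda$, and $c_0(\tau)$ is by definition the threshold below which $\min_{\lambda>0}\Phi(c,\lambda)>0$ (Lemma~\ref{lem:eigen}(i)). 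So the ingredient you rely on simply does not exist where you need it. (The real-root subsolution is the right tool for full-line travelling waves with $c\ge c_0(\tau)$, but the semi-wave lives on the \emph{complementary} speed range; this inversion is one of the structural features of free-boundary problems.) The paper's fix is to pass to a complex root $\lambda=\alpha+i\beta$ and use the compactly supported oscillatory subsolution $\delta e^{\alpha z}\cos\beta z$ restricted to one positive hump. For that to be a subsolution of the \emph{delayed} equation, the shifted profile must not change sign in the wrong way, which requires the imaginary part to satisfy $\beta c\tau<\pi$. Establishing that $\Phi(c,\cdot)=0$ always has a root in the strip $\{\mathrm{Re}\,\lambda>0,\ 0<\mathrm{Im}\,\lambda<\pi/(c\tau)\}$ is exactly the content of Lemma~\ref{lem:eigen}(ii), proved by a continuation-in-$\tau$ argument showing that roots cannot escape that strip. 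Your proposal has no substitute for this step, so the existence of $q_c$ (and hence of the whole semi-wave family) is not established.

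A secondary, smaller issue: you justify $q'_{c,+}(0)\to 0$ as $c\uparrow\hat c(\tau)$ by saying ``a limit profile must coincide with the critical full-line wave.'' That is not the right mechanism: the natural limit of the $q_c$'s is a nonnegative solution of the half-line problem \eqref{semiwave} at $c=c_0(\tau)$, and the relevant input is the \emph{nonexistence} of a positive semi-wave at $c=c_0(\tau)$ (Proposition~\ref{prop:qoan1}), which the paper proves by a sliding comparison against $le^{\lambda_1 z}$ using the positive real root available at $c\ge c_0(\tau)$. The limit is forced to be $\equiv 0$, not a full-line wave. Your monotonicity-in-$\tau$ argument (treating $q_{c,\tau_1}$ as a strict supersolution of the $\tau_2$-problem and invoking Hopf at $z=0$) is sound in spirit and matches the paper's claim, though the paper realises the comparison via a parabolic dynamical-system argument rather than a direct elliptic sliding; either can be made rigorous once existence and uniqueness of the $q_c$'s are secured.
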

Due to the presence of time delay, the proof of Theorem \ref{thm:semiwave} highly relies on the distribution of complex
solutions of the following transcendental equation
\begin{equation}
\lambda^2-c\lambda-d+f'(0)e^{-\lambda c\tau}=0.
\end{equation}
We refer to Lemma \ref {lem:eigen} and Proposition \ref{prop:qoan1}, which are independently of interest.

With the semi-wave established above, we can construct various super- and subsolutions to estimate the spreading
fronts $h(t),g(t)$ and the spreading profile as $t\to\infty$.

\begin{thm}\label{thm:profile of spreading sol}
{\rm(Spreading profile)}
Let $u$ be a solution satisfying Theorem \ref{thm:asy be}(i).  Then there exist two constants $H_1$ and  $G_1$ such that
\[
\lim\limits_{t\to\infty}[h(t)- c^*t] = H_1 ,\quad \ \lim\limits_{t\to\infty} h'(t)=c^*,
\]
\[
\lim\limits_{t\to\infty}[g(t) + c^*t] = G_1 ,\quad\ \lim\limits_{t\to\infty} g'(t)=-c^*,
\]
\begin{equation}\label{profile convergence 1}
\lim\limits_{t\to\infty} \left\| u(t,\cdot)- q_{c^*}(c^*t+ H_1-\cdot) \right\|_{L^\infty ( [0, h(t)])}=0,
\end{equation}
\begin{equation}\label{profile convergence 1-left}
\lim\limits_{t\to\infty} \left\| u(t,\cdot)- q_{c^*}(c^*t- G_1+\cdot )\right\|_{L^\infty ([g(t), 0])} =0,
\end{equation}
where $(c^*,q_{c^*})$ is the unique solution of \eqref{sw11}.
\end{thm}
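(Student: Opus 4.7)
The plan is to use the unique semi-wave $(c^*, q_{c^*})$ furnished by Theorem~\ref{thm:semiwave} as a template for super- and subsolutions that sandwich $(u,g,h)$, then to promote the resulting bounds on $h(t)-c^*t$ and $g(t)+c^*t$ to convergence via a moving-frame compactness argument combined with the uniqueness half of Theorem~\ref{thm:semiwave}. The truly delicate step will be the subsolution construction, since \eqref{sw11} admits no solution with speed below $c^*$ and hence one cannot just translate a slower wave.

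For the upper estimate I take $\bar u(t,x) := q_{c^*}(c^*t + K - x)$ on $x \leqslant c^*t + K$. By construction $\bar u$ satisfies the delayed equation in \eqref{p}, and the Stefan condition holds because $\mu q_+'(0) = c^*$. Choosing $K$ large so that $h(\theta) \leqslant c^*\theta + K$ and $\phi(\theta,x) \leqslant \bar u(\theta,x)$ for every $\theta \in [-\tau,0]$ (possible since $\phi \leqslant u^* = q_{c^*}(+\infty)$), the comparison principle, valid because $f$ is monotone on $[0,u^*]$ by $\mathbf{(H)}$, forces $h(t) \leqslant c^*t + K$ and $u(t,x) \leqslant \bar u(t,x)$ for all $t>0$. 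Reflecting the construction yields $g(t) \geqslant -c^*t - K'$ together with the matching pointwise upper bound on the left half.

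The lower bound is the main obstacle. I would propose a subsolution of the form
\begin{equation*}
\underline u(t,x) \;=\; \bigl(1 - \delta\, e^{-\gamma t}\bigr)\, q_{c^*}\!\bigl(c^*t - L(t) - x\bigr)
\end{equation*}
on a moving interval with right front $\underline h(t) = c^*t - L(t)$, where $\dot L(t)>0$ and $L(\infty)<\infty$, together with the mirror construction on the left. The Stefan condition at $\underline h(t)$ forces $\dot L(t) = c^*\delta e^{-\gamma t}$, which is summable. Plugging into the delayed PDE and invoking $q''-c^*q'-dq+f(q(\cdot-c^*\tau))=0$, the required subsolution inequality reduces to controlling the mismatch between $(1-\delta e^{-\gamma t})f(q)$ and $f\bigl((1-\delta e^{-\gamma(t-\tau)}) q(\cdot - c^*\tau - \Delta L)\bigr)$; the fact that $f(s)/s$ is decreasing on $[0,u^*]$ by $\mathbf{(H)}$, combined with the exponential decay of $q_{c^*}'$ at $+\infty$ and the freedom to tune $\delta$ small and $\gamma$ small, makes the inequality hold for large $t$. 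Comparison applied from some time $t_0$ onward then gives $h(t) \geqslant c^*t - M$ and $g(t) \leqslant -c^*t + M$.

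With $h(t) - c^*t$ and $g(t)+c^*t$ bounded, I would fix any sequence $t_n \to \infty$ and consider the shifts
\begin{equation*}
u_n(t,z) := u(t + t_n, z + h(t_n)), \qquad h_n(t) := h(t+t_n) - h(t_n), \qquad g_n(t) := g(t+t_n) - h(t_n).
\end{equation*}
Interior parabolic Schauder estimates and Hopf boundary regularity, together with uniform control of $h_n'$ and $g_n$, produce local precompactness. Extracting a subsequence yields an entire solution $(u_\infty, h_\infty, g_\infty)$ of the delayed free-boundary problem on $(-\infty,\infty)\times(g_\infty(t), h_\infty(t))$ with $g_\infty \equiv -\infty$ (since $g(t_n)-h(t_n)\to -\infty$), still obeying the Stefan condition at the right front. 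The upper and lower solutions from the previous two paragraphs squeeze $u_\infty$ into the form $u_\infty(t,z) = q_{c^*}(c^*t - z + A)$ for some $A\in\mathbb{R}$, and the uniqueness assertion of Theorem~\ref{thm:semiwave} forces $A$ to be independent of the chosen subsequence. Hence $h(t) - c^*t \to H_1 := A$, $h'(t) \to c^*$, and \eqref{profile convergence 1} follows from the same compactness argument applied uniformly on $[0, h(t)]$. The statements for $g$ and \eqref{profile convergence 1-left} are obtained by the symmetric construction on the left half.
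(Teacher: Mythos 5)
Your overall template does match the paper's: build super- and subsolutions from translates and scalings of $q_{c^*}$ to trap $h(t)-c^*t$ in a bounded interval, then pass to a limit in a moving frame and use the rigidity of the semi-wave to pin down the shift. But several of your steps, as written, do not close, and one entire lemma is missing.

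\textbf{The plain semi-wave is not a supersolution above the data.} You take $\bar u(t,x)=q_{c^*}(c^*t+K-x)$ and claim $K$ can be chosen so that $\phi(\theta,x)\leqslant\bar u(\theta,x)$. Since $q_{c^*}<u^*$ at every finite argument, this fails whenever $\phi$ attains the value $u^*$ (allowed by \eqref{def:X}), and even when $\sup\phi<u^*$ the method is fragile because $u(t,\cdot)$ approaches $u^*$ locally uniformly as spreading proceeds, so the re-initialisation needed later in the argument runs into the same problem. The paper's supersolution is
$\bar u(t,x)=\min\big\{(1+M'e^{-\beta^*t})q_{c^*}(\bar h(t)-x),\,u^*\big\}$
with $\bar h$ moving slightly faster than $c^*t$; the cap at $u^*$ forces $\bar u\equiv u^*$ on a left region where $u$ might be arbitrarily close to $u^*$, and the decaying factor makes both the strict Stefan inequality and the PDE inequality hold. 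Without the cap and without the extra factor you cannot verify either the initial ordering or the strict inequality $\bar h'>-\mu\bar u_x$.

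\textbf{A preliminary lemma is missing.} Your subsolution $\underline u(t,x)=(1-\delta e^{-\gamma t})q_{c^*}(c^*t-L(t)-x)$ has height close to $u^*$ well behind its front. To start the comparison at some large $t_0$ you must already know that $u(t_0,x)\geqslant(1-\delta e^{-\gamma t_0})u^*$ on the support of $\underline u(t_0,\cdot)$, i.e.\ that $u$ has converged to $u^*$ exponentially fast on a moving interval roughly $[-ct_0,ct_0]$ with $c$ close to $c^*$. This is exactly what the paper's Lemma~\ref{lem:u-to-1} supplies, via the Fife--McLeod construction $\underline u=\max\{0,\,q_{c^*}(x+c^*t+\xi)+q_{c^*}(-x+c^*t+\xi)-u^*-p(t)\}$ with $\xi$ and $p$ tuned against the delay. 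Your proposal never establishes this, and without it the subsolution comparison has no valid starting point. You also gloss over the actual verification of the subsolution inequality; in the delayed setting the terms $q_\tau:=q_{c^*}(\underline h(t-\tau)-x)$ are shifted both by $c^*\tau$ and by the correction $\Delta L$, and one must split into a far-field regime (using $f'$ near $u^*$, the decay rate of $q_{c^*}'$, and the relation $q_{c^*}'(z-c^*\tau)\lesssim q_{c^*}'(z)$) and a near-front regime (forcing $\sigma$ large). ``Tune $\delta$ and $\gamma$ small'' does not by itself handle both regimes.

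\textbf{The compactness step needs a sliding argument.} After extracting a limit $V(t,y)$ in the moving frame, the super- and subsolutions only give $\phi(y-X_*)\leqslant V(t,y)\leqslant\phi(y-X^*)$ for fixed shifts $X_*\leqslant X^*$; they do not ``squeeze'' $V$ to a single translate. The paper proves $X^*=X_*$ in Lemmas~\ref{limn2} and \ref{limn21} by identifying these extremal shifts with $\sup H$ and $\inf H$ and then running a dedicated super/subsolution argument in the moving frame that drives $H(t)$ toward each extremal value. Equally, ``the uniqueness assertion of Theorem~\ref{thm:semiwave} forces $A$ to be independent of the subsequence'' is not a substitute: that theorem is about uniqueness of the speed $c^*$ and of the profile up to its own boundary conditions, not about uniqueness of the shift for entire solutions of the free boundary problem. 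Without the sliding step your argument establishes only boundedness of $h(t)-c^*t$, not its convergence.

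So the skeleton is the right one, but you would need to (i) replace $q_{c^*}(c^*t+K-x)$ with the capped, exponentially scaled supersolution; (ii) supply the Fife--McLeod type preliminary estimate that $u\to u^*$ exponentially in the moving interval $[-ct,ct]$; and (iii) supply the sliding argument $X^*=X_*$ to upgrade boundedness to convergence.
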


The rest of this paper is organized as follows. In Section 2 we derive the compatible condition \eqref{CC}, with which we formulate problem (P) and then establish the well-posedness as well as the comparison principle. Section 3 is devoted to the study of the semi-wave problem \eqref{sw11}. In section 4, we establish the spreading-vanishing dichotomy result. Finally in Section 5, we characterize the spreading speed and profile of spreading solutions of \eqref{p}.

\section{The compatible condition, well-posedness and comparison principle}\label{sec:basic}

\subsection{The compatible condition}
To formulate problem \eqref{p}, we start from the age-structured population growth law
\begin{equation}\label{ase}
p_t+p_a=D(a)p_{xx}-d(a)p,
\end{equation}
where $p=p(t,x;a)$ denotes the density of species of age $a$ at time $t$ and location $x$, $D(a)$ and $d(a)$ denote the diffusion rate
and death rate of species of age $a$, respectively.

Next we consider the scenario that the species has the following biological characteristics.
\begin{itemize}
\item[(A1)] The species can be classified into two stages by age: mature and immature. An individual at time
$t$ belongs to the mature class if and only if its age exceeds the maturation time $\tau>0$. Within each stage,
all individuals share the same behavior.
\item[(A2)] Immature population does not move in space.
\end{itemize}
The total mature population $u$ at time $t$ and location $x$ can be represented by the integral
\begin{equation}\label{mimuv}
u(t,x)=\int_\tau^\infty p(t,x;a)da.
\end{equation}
We assume that the mature population $u$ lives in the habitat $[g(t),h(t)]$,  vanishes in the boundary
\begin{equation}\label{vc}
u(t,g(t))=0=u(t,h(t)),\quad t>0
\end{equation}
and extends the habitat by obeying the Stefan type moving boundary conditions:
\begin{equation}\label{fbc}
h'(t)=-\mu u_x(t,h(t)),\ \ g'(t)=-\mu u_x(t,g(t)), \quad t>0,
\end{equation}
where $\mu$ is a given positive constant.  Note that the immature population does not contribute to the extension of habitat due to their immobility, as assumed in (A2).

According to (A1) we may assume that
\[
D(a)=\left\{
\begin{array}{ll}
1,& a\geqslant \tau ,\\
0, & 0\leqslant a<\tau,
\end{array} \right.
\ \ \ \
d(a)=\left\{
\begin{array}{ll}
d ,& a\geqslant \tau ,\\
d_I, & 0\leqslant a<\tau,
\end{array} \right.
 \]
 where $d$ and $d_I$ are two positive constants.  Differentiating the both sides of the equation \eqref{mimuv}
in time yields
\begin{eqnarray}\label{diff-u}
u_t & = &\int_\tau^\infty p_t da = \int_\tau^\infty [-p_a+ p_{xx}-d p]da\nonumber\\
& = & u_{xx}-d u+p(t,x;\tau) -p(t,x;\infty).\label{du1}
\end{eqnarray}
Since no individual lives forever, it is nature to assume that
\begin{equation}\label{infinity}
p(t,x;\infty)=0.
\end{equation}
To obtain a closed form of the model, one then needs to express $p(t,x;\tau)$ by $u$ in a certain way.
Indeed, $p(t,x;\tau)$ denotes the newly matured population at time $t$, and it is the evolution result of newborns at $t-\tau$.
In other words, there is an evolution relation between the quantities $p(t,x;\tau)$ and $p(t-\tau,x;0)$. Such a relation is obeyed by the
growth law \eqref{ase} for $0<a<\tau$, and hence it is the time-$\tau$ solution map of the following equation
\begin{equation}\label{ast}
\left\{
 \begin{array}{ll}
 q_s=-d_Iq, & x\in\R,\ 0\leqslant s\leqslant \tau,\\
 q(0,x)=p(t-\tau,x;0),  & x\in\R.
 \end{array}
 \right.
\end{equation}
Therefore, $p(t,x;\tau)=q(\tau,x)=e^{-d_I\tau}p(t-\tau,x,0)$. Further, the newborns $p(t-\tau,x;0)$ is given by the birth $b(u(t-\tau,x))$, where $b$ is the birth rate function with $b(0)=0$. Consequently,
\begin{equation}\label{pptst}
p(t,x;\tau)=e^{-d_I\tau}b(u(t-\tau,x)).
\end{equation}
Combining \eqref{vc}-\eqref{infinity} and \eqref{pptst},  we are led to the following system:

\begin{equation}\label{p-nonsim}
\left\{
\begin{array}{lll}
u_t(t,x) =u_{xx}(t,x)- d u(t,x) +e^{-d_I\tau}b(u(t-\tau,x)), &  t>0, x\in[g(t-\tau),h(t-\tau)]\\
u_t(t,x) =u_{xx}(t,x)- d u(t,x), &  t>0, x\in[g(t),h(t)]\setminus [g(t-\tau),h(t-\tau)]\\
u(t,g(t))=0=u(t,h(t)), &t>0\\
h'(t)=-\mu u_x(t,h(t)),\ \ g'(t)=-\mu u_x(t,g(t)), & t>0.
\end{array} \right.
\end{equation}
For $t>0$, outside the habitat $(g(t),h(t))$ the mature population does not exist, that is,
\begin{equation}\label{uhhgg}
u(t,x)\equiv0 \ \ \ \mbox{ for } \ t>0,\; x\not\in(g(t),h(t)).
\end{equation}
Clearly, since the habitat is expanding for $t>0$, we have
\begin{equation}\label{habitat}
[g(t-\tau),h(t-\tau)]\subset [g(t),h(t)],\quad t\geqslant \tau.
\end{equation}
Hence, the first two equations in \eqref{p-nonsim} can be written as the following single one
\begin{equation}
u_t(t,x) =u_{xx}(t,x)- d u(t,x) +e^{-d_I\tau}b(u(t-\tau,x)), \quad  t>0, x\in[g(t),h(t)]
\end{equation}
provided that \eqref{habitat} holds for $t\geqslant 0$. As such, in view of \eqref{habitat} we need an additional condition
\begin{equation}\label{AC}
[g(t-\tau),h(t-\tau)]\subset [g(t),h(t)], \quad t\in[0,\tau).
\end{equation}
Note that $[g(0),h(0)]\subset[g(t),h(t)]$ for $t>0$. And as the coefficient $\mu\to+\infty$ we have $[g(t),h(t)]\to [g(0,h(0))]$ uniformly for $t\in [0,\tau]$. Therefore, regardless of the influence of $\mu$, \eqref{AC} is strengthened to be
\[
[g(\theta), h(\theta)]\subset [g(0), h(0)]\ \ \ \mbox{ for }\ \theta\in[-\tau,0],
\]
which is the aforementioned compatible condition \eqref{CC}.

Setting $f(s):=e^{-d_I\tau}b(s)$ in \eqref{p-nonsim}, we obtain problem \eqref{p}.

\subsection{Well-posedness}
We employ the Schauder fixed point theorem to establish the local existence of solutions to \eqref{p},  and prove the uniqueness,
 then extend the solutions to all time by an estimate on the free boundary.

\begin{thm}\label{thm:local}
Suppose {\bf(H)} holds. For any $\alpha\in (0,1)$, there is a $T>0$ such that problem
\eqref{p} admits a solution
$$(u, g, h)\in C^{(1+\alpha)/2,
1+\alpha}([0,T]\times[g(t),h(t)])\times C^{1+\alpha/2}([0,T])\times C^{1+\alpha/2}([0,T]).$$
\end{thm}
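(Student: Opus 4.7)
The plan is to straighten the unknown free boundaries, recast problem \eqref{p} as a linear inhomogeneous parabolic problem on a fixed spatial cylinder whose coefficients depend on the unknown boundary motion, and then obtain a solution as a fixed point of a compact map via the Schauder fixed point theorem. A crucial simplification is the choice $T\leqslant\tau$: on such a short interval the delayed argument $u(t-\tau,\cdot)$ lies entirely in the prescribed history, so the nonlinearity $f(u(t-\tau,x))$ reduces to the known forcing $F(t,x):=f(\phi(t-\tau,x))$, with $\phi$ extended by zero outside $[g(t-\tau),h(t-\tau)]$ as prescribed in \eqref{def:X}. The compatible condition \eqref{CC} plays its essential role here: together with the fact that $[g(0),h(0)]\subset[g(t),h(t)]$ for $t\geqslant 0$, it guarantees $[g(t-\tau),h(t-\tau)]\subset[g(t),h(t)]$ throughout $t\in[0,\tau]$, so the PDE in \eqref{p} is formulated unambiguously on the full habitat.

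Fix constants $K$ large and $T$ small, to be tuned later, and consider the closed convex set
\[
\Sigma_T:=\bigl\{(g,h)\in C^{1+\alpha/2}([0,T])^2 : (g,h) \text{ and } (g',h') \text{ match the initial data at } t=0,\ \|g'\|_{C^{\alpha/2}}+\|h'\|_{C^{\alpha/2}}\leqslant K\bigr\}.
\]
For each $(g,h)\in\Sigma_T$ I would introduce a smooth change of variable $y=\Psi(t,x;g,h)$ that maps $[g(t),h(t)]$ onto the reference interval $[g(0),h(0)]$ and equals the identity away from a neighbourhood of the two boundaries. Under this straightening, the PDE in \eqref{p} becomes a linear second-order parabolic equation on the fixed cylinder $[0,T]\times[g(0),h(0)]$ with forcing $F$ and with Hölder continuous coefficients depending on $(g,h)$. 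Classical parabolic Schauder theory then yields a unique $u\in C^{(1+\alpha)/2,1+\alpha}$, with norm controlled by the data. Finally, define $\Gamma(g,h):=(\tilde g,\tilde h)$ by integrating the Stefan conditions,
\[
\tilde g(t):=g(0)-\mu\int_0^t u_x(s,g(s))\,ds,\qquad \tilde h(t):=h(0)-\mu\int_0^t u_x(s,h(s))\,ds.
\]
The up-to-the-boundary Schauder estimate gives $\tilde g',\tilde h'\in C^{\alpha/2}([0,T])$, and choosing $T$ sufficiently small forces $\Gamma(\Sigma_T)\subset\Sigma_T$ and yields both continuity and compactness of $\Gamma$ (the latter from compact embedding of $C^{\alpha/2}$ into $C$ on $[0,T]$). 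The Schauder fixed point theorem then delivers a fixed point $(g,h)$, and the associated triple $(u,g,h)$ is the desired local solution; uniqueness follows by straightening two candidate solutions with the same transformation and running a short-time Gronwall argument on the difference, using that $f$ is locally Lipschitz by \textbf{(H)}.

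The main obstacle, in my view, lies in the interaction between the delay and the moving boundary when tracking regularity. Parabolic Schauder estimates require the forcing $F(t,x)=f(\phi(t-\tau,x))$ to be Hölder continuous jointly in $(t,x)$, yet $\phi$ is only $C^{1,2}$ on the historical domain and is extended by zero across the curves $x=g(\theta),h(\theta)$ with $\theta\in[-\tau,0]$, where only the values of $\phi$ (not its derivatives) match. The extension is therefore continuous but in general not more than Lipschitz near these curves, so one must verify carefully that $F$ has the Hölder regularity that Schauder theory demands and that the boundary traces $u_x(\cdot,g(\cdot))$ and $u_x(\cdot,h(\cdot))$ inherit enough Hölder-in-time regularity to close the self-map property $\Gamma(\Sigma_T)\subset\Sigma_T$. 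Handling this interplay cleanly, together with the standard technicalities of estimating how the straightening $\Psi$ depends on $(g,h)$, is where the bulk of the work goes.
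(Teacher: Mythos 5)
Your proposal follows the same architecture as the paper's proof: straighten the free boundaries via a cutoff-function change of variables, exploit $T\leqslant\tau$ to freeze the delayed term to the known forcing $f(\phi(t-\tau,\cdot))$, define the boundary-update map by integrating the Stefan conditions, invoke the Schauder fixed point theorem on a suitable closed convex set, and then run a short-time contraction estimate for uniqueness (again using $T\leqslant\tau$ to kill the nonlinearity in the difference). The role you assign to the compatible condition \eqref{CC} is also the one the paper intends.

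The one genuine gap is in the interior step of the iteration, and you have half-noticed it yourself. You appeal to ``classical parabolic Schauder theory'' to solve the straightened fixed-domain problem in $C^{(1+\alpha)/2,1+\alpha}$. But, as you point out, the forcing $F(t,y)=f(\phi(t-\tau,y))$, with $\phi$ extended by zero across $x=g(\theta),h(\theta)$, is merely Lipschitz (and only Lipschitz, generically, since $\phi_x$ need not vanish on those curves). Full Schauder theory requires H\"older forcing and in any case produces $C^{1+\alpha/2,2+\alpha}$ rather than the $C^{(1+\alpha)/2,1+\alpha}$ class you need; you flag this as an ``obstacle'' to be ``verified carefully'' but do not resolve it. The paper resolves it by not using Schauder theory at all at this step: for each fixed $(g,h)$ it solves the straightened problem by standard $L^p$ theory, obtaining $w\in W^{1,2}_p(\Omega_T)$, and then lands in $C^{(1+\alpha)/2,1+\alpha}(\Omega_T)$ via the Sobolev embedding. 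This requires only that the forcing be bounded, which it is, and gives exactly the boundary-trace regularity of $w_y$ needed to show the iteration map preserves the set $\mathcal D$. So the fix is concrete: replace your Schauder appeal by the $L^p$--Sobolev route. Two smaller remarks: your uniqueness sketch speaks of ``straightening two candidate solutions with the same transformation,'' which is ambiguous since $(g_1,h_1)\neq(g_2,h_2)$ force different straightenings; the paper straightens each with its own transformation and absorbs the discrepancy into coefficient-difference terms $(A_1-A_2)(w_1)_{yy}$, etc. And you invoke Lipschitz continuity of $f$ for the uniqueness step, but with $T\leqslant\tau$ the delayed term cancels identically in the difference, so no Lipschitz hypothesis on $f$ is needed there.
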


\begin{proof}  We divide the proof into three steps.

\smallskip

$Step\ 1$. We use a change of variable argument to transform problem \eqref{p} into a fixed boundary problem with
a more complicated equation which is used in \cite{CF, DuLin}. Denote $l_1=g(0)$ and $l_2=h(0)$ for convenience, and set
$h_0=\frac{1}{2}(l_2-l_1)$. Let $\xi_{1}(y)$ and $\xi_{2}(y)$ be two nonnegative functions in $C^{3}(\R)$ such that
\[
\xi_{1}(y)=1\  \mbox{ if}\  | y-l_2|< \frac{h_{0}}{4},\  \xi_{1}(y)=0\ \mbox{ if} \ |y-l_2| > \frac{h_{0}}{2},\
|\xi_{1}'(y)|<\frac{6}{h_{0}}\ \mbox{for}\  y\in \R;
\]
\[
\xi_{2}(y)=1\ \mbox{ if}\  | y-l_1| < \frac{h_{0}}{4},\  \xi_{2}(y)=0\ \mbox{ if}\  | y-l_1| > \frac{h_{0}}{2},\
|\xi_{2}'(y)| < \frac{6}{h_{0}}\ \mbox{for}\  y\in \R.
\]
Define $y= y(t,x)$ through the identity
\begin{align*}
&x=y+\xi_{1}(y)(h(t)-l_2)+\xi_{2}(y)(g(t)-l_1)\  \ \ \mbox{ for } t>0,\\
&x\equiv y\ \ \ \mbox{ for } -\tau\leqslant t\leqslant 0.
\end{align*}
and set
\begin{align*}
&w(t,y):=u(t,y+\xi_{1}(y)(h(t)-l_2)+\xi_{2}(y)(g(t)-l_1))=u(t,x)\  \ \ \mbox{ for } t>0,\\
&w(\theta,y):=\phi(\theta,y)\ \ \ \mbox{ for } -\tau\leqslant \theta\leqslant 0.
\end{align*}
Then the free boundary problem \eqref{p} becomes
\begin{equation}\label{lin1}
\left\{
\begin{array}{ll}
 w_t -A(g,h,y)w_{yy} + B(g,h,y)w_{y}=f(w(t-\tau,y))- d  w, &  y\in(l_1, l_2),\ t>0,\\
 w(t,l_i)=0,  &  t>0,\ i=1, 2,\\
w(\theta,y) =\phi(\theta,y),& y\in[l_1,l_2],\ \theta\in[-\tau,0],
\end{array}
\right.
\end{equation}
and
\begin{equation}\label{lghin1}
g'(t)=-\mu\, w_y(t,l_1), \ \ h'(t) = -\mu w_{y}(t,l_2),\ \ \ t>0,
\end{equation}
with $f(w(t-\tau,y))=f(u(t-\tau,y))$ and $A(g,h,y)=[1+\xi_1'(y)(h(t)-l_2)+\xi_2'(y)(g(t)-l_1)]^{-2}$,
\[
B(g,h,y)=[\xi_1''(y)(h(t)-l_2)+\xi_2''(y)(g(t)-l_1)]A(g,h,y)^{\frac {3}{2}}-[\xi_1(y)h'(t)+\xi_2(y)g'(t)]A(g,h,y)^{\frac {1}{2}}.
\]

Denote $ h_{1}=-\mu (u_{0})_y(0,l_2)$, and  $h_{2}=\mu (u_{0})_y(0,l_1)$. For $ 0<T\leqslant \min\big\{\frac{h_{0}}{16(1+ h_{1} +h_{2})},\ \tau\big\}$,
we define $\Omega_{T}:=[0,T]\times[l_1,l_2]$,
\begin{align*}
%&\mathcal{D}^{w}_{T}=\{w\in C(\Omega_{T}):\ w(\theta,y)=\phi(\theta,y),\ \theta\in[-\tau,0],\ \| w(t,y)-\phi(0,y)\|_{C(\Omega_{T})}\leqslant 1\},\\
&\mathcal{D}^{h}_{T}=\{h\in C^{1}([0,T]):\ h(0)=l_2,\ h'(0)=h_{1},\ \| h'-h_{1}\|_{C([0,T])} \leqslant 1\},\\
&\mathcal{D}^{g}_{T}=\{g\in C^{1}([0,T]):\ g(0)=l_1,\ g'(0)=-h_{2},\ \| g'+h_{2}\|_{C([0,T])} \leqslant 1\}.
\end{align*}
Clearly, $\mathcal{D}:=\mathcal{D}^{g}_{T}\times\mathcal{D}^{h}_{T}$ is a bounded and closed convex set of $C^1([0,T])\times C^1([0,T])$.

Noting that the restriction on $T$, it is easy to see that the transformation $(t,y)\rightarrow(t,x)$ is well defined.
By a similar argument as in \cite{W}, applying standard $L^p$ theory and the Sobolev embedding theorem, we can deduce that for any given $(g,h)\in \mathcal{D}$,
problem \eqref{lin1} admits a unique $w(t,y;g,h)\in W^{1,2}_p(\Omega_{T})\hookrightarrow C^{\frac{1+\alpha}{2},{1+\alpha}}(\Omega_{T})$, which satisfies
\begin{equation}\label{eq1}
  \|w\| _{W^{1,2}_p(\Omega_{T})}+\|w\| _{C^{\frac{1+\alpha}{2},{1+\alpha}}(\Omega_{T})}\leqslant C_{1},
\end{equation}
 where $p>1$ and $C_{1}$ is a constant dependent on $g(\theta)$, $h(\theta)$, $\alpha$, $p$ and $\| \phi\|_{C^{1,2}([-\tau,0]\times[g(\theta),h(\theta)])}$.

Defining $\hat{h}$ and $\hat{g}$ by
$\hat{h}(t)=l_2-\int_0^t \mu w_{y}(s, l_2)ds$ and $\hat{g}(t)=l_1-\int_0^t \mu w_{y}(s, l_1)ds$, respectively,
then we have
\[
\hat{h}'(t)=-\mu w_{y}(t, l_2),\ \hat{h}(0)=l_2,\ \hat{h}'(0)=-\mu w_{y}(0, l_2)=h_1,
\]
and thus $\hat{h}'\in C^{\frac{\alpha}{2}}([0,T])$, which satisfies
\begin{equation}\label{eq2}
\|\hat{h}'\| _{C^{\frac{\alpha}{2}}([0,T])}\leqslant \mu C_{1}=:C_{2}.
\end{equation}
Similarly $\hat{g}'\in C^{\frac{\alpha}{2}}([0,T])$,
which satisfies
\begin{equation}\label{eq3}
\|\hat{g}'\| _{C^{\frac{\alpha}{2}}([0,T])}\leqslant \mu C_{1}=:C_{2}.
\end{equation}

\smallskip

$ Step\ 2$.  For any given triple $(g,h)\in \mathcal{D}$, we define an operator $ \mathcal{F}$ by
\[
 \mathcal{F}(g,h)=(\hat{g},\hat{h}).
\]
Clearly, $\mathcal{F}$ is continuous in $\mathcal{D}$, and $(g,h)\in \mathcal{D}$ is a fixed point of $\mathcal{F}$ if and
only if $(w,g,h)$ solves \eqref{lin1} and \eqref{lghin1}. We will show that if $ T>0$ is small enough, then $\mathcal{F}$ has
a fixed point by using the Schauder fixed point theorem.

Firstly, it follows from \eqref{eq2} and \eqref{eq3} that
\[
\|\hat{h}'-h_{1}\|_{C([0,T])}\leqslant
C_{2}T^{\frac{\alpha}{2}},\ \|\hat{g}'+h_{2}\|_{C([0,T])}\leqslant C_{2}T^{\frac{\alpha}{2}}.
\]
Thus if we choose $T\leqslant \min\big\{\frac{h_{0}}{16(1+ h_{1} +h_{2})},\ \tau, \ C^{-\frac{2}{\alpha}}_{2}\big\}$, then $\mathcal{F}$ maps
$\mathcal{D}$ into itself. Consequently, $\mathcal{F}$ has at least one fixed point by using the Schauder fixed point theorem, which implies
that \eqref{lin1} and \eqref{lghin1} have at least one solution $(w,g,h)$ defined in $[0,T]$. Moreover, by the Schauder estimates, we have
additional regularity for $(w, g, h)$ as a solution of \eqref{lin1} and \eqref{lghin1}, namely,
\[
(w,g,h)\in C^{1+\alpha/2,2+\alpha}((0,T]\times[l_1,l_2])\times C^{1+\alpha/2}((0,T]) \times C^{1+\alpha/2}((0,T])
\]
and for any given $0<\varepsilon<T$, there holds
\[
\|w\|_{C^{1+\alpha/2,2+\alpha}([\varepsilon,T]\times[l_1,l_2])}\leqslant C_3,
\]
where $C_3$ is a constant dependent on $\varepsilon$, $ g(\theta)$, $h(\theta)$, $\alpha$ and $\| \phi\|_{C^{1,2}}$.
Thus we deduce a local classical solution $(u,g,h)$ of \eqref{p} by $(w,g,h)$, and $u\in C^{1+\alpha/2,2+\alpha}((0,T]\times[g(t),h(t)])$
satisfies
\[
\|u\|_{C^{1+\alpha/2,2+\alpha}([\varepsilon,T]\times[g(t),h(t)])}\leqslant C_3.
\]

\smallskip

$ Step\ 3$. We will prove the uniqueness of solutions of \eqref{p}.
Let $(u_i,g_i,h_i)$, $i=1,2$, be two solutions of \eqref{p} and set
\[
w_i(t,y):=u_i(t,y+\xi_{1}(y)(h_i(t)-l_2)+\xi_{2}(y)(g_i(t)-l_1)).
\]
Then it follows from  \eqref{eq1}, \eqref{eq2} and \eqref{eq3} that
\[
\|w_i\| _{W^{1,2}_p(\Omega_{T})}+\|w_i\| _{C^{\frac{1+\alpha}{2},{1+\alpha}}(\Omega_{T})}\leqslant C_{1},\ \
\|h'_i\| _{C^{\frac{\alpha}{2}}([0,T])}\leqslant C_{2},
\ \ \|g'_i\| _{C^{\frac{\alpha}{2}}([0,T])}\leqslant C_{2}.
\]
Set
\[
\tilde{w}(t,y):=w_{1}(t,y)-w_{2}(t,y), \ \ \tilde{g}(t):=g_1(t)-g_2(t),\ \mbox{ and }\ \tilde{h}(t):=h_1(t)-h_2(t),
\]
then we find that $\tilde{w}(t,y)$ satisfies that
\begin{equation}\label{p1}
\left\{
\begin{array}{ll}
 \tilde{w}_{t} -A_{2}(t,y)\tilde{w}_{yy} + B_{2}(t,y)\tilde{w}_{y}=\tilde{f}(t,y), &  y\in(l_1, l_2),\ t\in(0, T),\\
 \tilde{w}(t,l_1)=\tilde{w}(t,l_2)= 0,  &  t\in(0, T),\\
\tilde{w}(\theta,y) =0 ,& y\in[l_1, l_2],\ \theta\in[-\tau, 0],
\end{array}
\right.
\end{equation}
where
\[
\tilde{f}(t,y)=(A_{1}-A_{2})(w_{1})_{yy}-(B_{1}-B_{2})(w
_{1})_{y}+f(w_1(t-\tau,y))-f(w_2(t-\tau,y))- d \tilde{w},
\]
and $A_i$ and $B_i$ are the coefficients of problem \eqref{lin1} with
$(w_i,g_i,h_i)$ instead of $(w,g,h)$.

Recalling that $T\leqslant \tau$, then $f(w_1(t-\tau,y))-f(w_2(t-\tau,y))=0$ for all $(t,y)\in \Omega_{T}$,
thus
\[
\tilde{f}(t,y)=(A_{1}-A_{2})(w_{1})_{yy}-(B_{1}-B_{2})(w_{1})_{y}- d \tilde{w}.
\]
Thanks to this, we can apply the $L^p$ estimates for parabolic equations to deduce that
\begin{equation}\label{sobem}
\|\tilde{w}\|_{W^{1,2}_p(\Omega_{T})}\leqslant C_4 (\| \tilde{g}\|_{C^1([0,T])}+\| \tilde{h}\|_{C^1([0,T])})
\end{equation}
with $C_4$ depending on $C_1$ and $C_2$. By a similar argument as in \cite{W}, we obtain that
\[
\| \tilde{w}\|_{C^{\frac{1+\alpha}{2},{1+\alpha}}(\Omega_{T})}\leqslant C \|\tilde{w}\|_{W^{1,2}_p(\Omega_{T})}
\]
for some positive constant $C$ independent of  $T^{-1}$. Thus
\begin{equation}\label{sobem}
\| \tilde{w}\|_{C^{\frac{1+\alpha}{2},{1+\alpha}}(\Omega_{T})}\leqslant C C_4 (\| \tilde{g}\|_{C^1([0,T])}+\| \tilde{h}\|_{C^1([0,T])})
\end{equation}
Since $\tilde{h}'(0)=h'_{1}(0)-h'_{2}(0)=0$, then
\[
\| \tilde{h}'\|_{C^{\frac{\alpha}{2}}([0,T])}=\mu \| \tilde{w}_{y}\|_{C^{\frac{\alpha}{2},0}(\Omega_{T})}\leqslant
\mu \| \tilde{w}\|_{C^{\frac{1+\alpha}{2},{1+\alpha}}(\Omega_{T})}.
\]
This, together with \eqref{sobem}, implies that
\[
\| \tilde{h}\|_{C^1([0,T])}\leqslant 2T^{\frac{\alpha}{2}} \| \tilde{h}'\|_{C^{\frac{\alpha}{2}}([0,T])}\leqslant C_5T^{\frac{\alpha}{2}} (\| \tilde{g}\|_{C^1([0,T])}+\| \tilde{h}\|_{C^1([0,T])}),
\]
where $C_5=2\mu C C_4$. Similarly, we have
 \[
\| \tilde{g}\|_{C^1([0,T])}\leqslant  C_5T^{\frac{\alpha}{2}} (\| \tilde{g}\|_{C^1([0,T])}+\| \tilde{h}\|_{C^1([0,T])}),
\]
As a consequence, we deduce that
\[
\| \tilde{g}\|_{C^1([0,T])}\|+\| \tilde{h}\|_{C^1([0,T])}
\leqslant 2C_5 T^{\frac{\alpha}{2}} (\| \tilde{g}\|_{C^1([0,T])}+\| \tilde{h}\|_{C^1([0,T])}).
\]
Hence for
\[
T:=\min\Big\{\frac{h_{0}}{ 16(1+h_{1}+h_{2})},\ \tau,\ C^{-\frac{2}{\alpha}}_{2},\ (4C_5)^{-\frac{2}{\alpha}}\Big\},
\]
we have
\[
\| \tilde{g}\|_{C^1([0,T])}\|+\| \tilde{h}\|_{C^1([0,T])}
\leqslant \frac{1}{2} (\| \tilde{g}\|_{C^1([0,T])}+\| \tilde{h}\|_{C^1([0,T])}).
\]
This shows that $\tilde{g}\equiv 0 \equiv \tilde{h}$ for $0\leqslant t\leqslant T$, thus $\tilde{w}\equiv0$ in
$[0,T]\times[l_1,l_2]$. Consequently, the uniqueness of solution of \eqref{p} is established, which ends the proof of this theorem.
\end{proof}

\begin{lem}\label{lem:global}
Assume that {\bf(H)} holds. Then every positive solution $(u, g, h)$ of problem \eqref{p} exists and is unique for all
$t\in(0, \infty)$.
\end{lem}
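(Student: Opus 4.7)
The plan is to apply Theorem~\ref{thm:local} repeatedly, extending the local solution in time-steps whose length stays bounded below by a fixed positive constant. For this I need two uniform a priori estimates on any interval of existence: an $L^\infty$ bound on $u$, and bounds on $h'(t)$ and $-g'(t)$. Together these control the quantities $h_0,h_1,h_2$ and the $C^{1,2}$ norm of the history that appear in the step-size restriction of Theorem~\ref{thm:local}, so each restart gives a step of at least a fixed length, ruling out finite-time breakdown.

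The $L^\infty$ bound is $0<u(t,x)\leqslant u^*$. Positivity comes from the strong maximum principle. For the upper bound, assumption \textbf{(H)} gives $f(s)\leqslant f(u^*)=du^*$ on $[0,u^*]$; starting from $\phi\leqslant u^*$ (by \eqref{def:X}), a layer-by-layer induction on intervals of length $\tau$ shows that the constant $u^*$ is a supersolution of the linear problem $v_t-v_{xx}+dv=f(u(t-\tau,x))$ with vanishing Dirichlet data at the fronts, and comparison yields $u(t,\cdot)\leqslant u^*$ on the next layer. For the boundary velocities I would adapt the Du--Lin barrier: in a thin strip $h(t)-\delta\leqslant x\leqslant h(t)$ build a local supersolution
\[
w(t,x)=M\bigl[2(h(t)-x)-\delta^{-1}(h(t)-x)^2\bigr],
\]
and choose $M$ so large (depending only on $u^*$, $\max_{[0,u^*]}|f|$, $d$, and $\delta$) that $w\geqslant u$ on the parabolic boundary of the strip and $w_t-w_{xx}+dw\geqslant f(u(t-\tau,x))$ inside. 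The Hopf lemma then gives $-u_x(t,h(t))\leqslant 2M/\delta$, hence $h'(t)\leqslant 2\mu M/\delta$; a symmetric construction bounds $-g'(t)$. In particular the fronts advance at most linearly, so $[g(t),h(t)]$ stays in a compact set over any finite time interval.

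With both a priori bounds in hand, the continuation argument runs as follows. If $T^*\in(0,\infty]$ is the maximal existence time, pick any $T\in(0,T^*)$ and restart the problem with the shifted history $\bigl(u(T+\theta-\tau,\cdot),g(T+\theta-\tau),h(T+\theta-\tau)\bigr)$ for $\theta\in[-\tau,0]$. The compatibility condition \eqref{CC} for this shifted initial datum is automatic, because (as noted just after the statement of Theorem~\ref{thm:asy be}) $g$ is strictly decreasing and $h$ strictly increasing on $(0,\infty)$. Parabolic Schauder estimates on the straightened domain furnish the new history with the required $C^{1,2}$ regularity, and the uniform a priori bounds keep $h_0,h_1,h_2$ and $\|\phi\|_{C^{1,2}}$ controlled as $T\nearrow T^*$, so the step furnished by Theorem~\ref{thm:local} is bounded below; iterating forces $T^*=\infty$. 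Uniqueness on $(0,\infty)$ follows from the local uniqueness in Theorem~\ref{thm:local} applied window by window. The point I expect to require the most care is the barrier construction, where the delayed forcing $f(u(t-\tau,\cdot))$ must be absorbed using the $L^\infty$ bound before the barrier is deployed; one has to verify that the resulting constant $M$ does not deteriorate as one iterates over successive layers of length $\tau$.
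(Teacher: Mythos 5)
Your proposal follows essentially the same approach as the paper: establish the a priori bounds $0<u\leqslant u^*$ (by comparison, using that {\bf(H)} gives $f(s)\leqslant du^*$ on $[0,u^*]$) and $h',-g'\leqslant C_0$ (via the Du--Lin type quadratic barrier adapted to absorb the delayed forcing), then iterate Theorem~\ref{thm:local} with a step size bounded below by a fixed constant. The observation that the compatibility condition \eqref{CC} is automatically re-established for the shifted history, because $g$ is strictly decreasing and $h$ strictly increasing on $(0,\infty)$ while the original history already satisfies \eqref{CC} inside $[g(0),h(0)]\subset[g(T),h(T)]$, is indeed needed for the restart to be legitimate and is worth making explicit; the paper leaves it implicit. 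Your final worry is unfounded: the barrier constant depends only on $u^*$, $\max_{[0,u^*]}|f|$, $d$, $\mu$, and the $C^1$ norm of the initial history, all of which are fixed once the global bound $u\leqslant u^*$ is in hand, so a single barrier works uniformly on the whole maximal interval and there is no layer-by-layer deterioration.
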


\begin{proof}
Let $[0, T_{max})$ be the maximal time interval in which the solution exists. In view of Theorem \ref{thm:local}, it remains to
show that $T_{max}=\infty$. We proceed by a contradiction argument and assume that $T_{max}<\infty$.  Thanks to the choice of
the initial data, the comparison principle implies that $u(t,x)\leqslant u^*$ for $(t,x)\in(0,T_{max})\times[g(t),h(t)]$.
Construct the auxiliary function
\[
\bar{u}(t,x)=u^*\big[2M(h(t)-x)-M^{2}(h(t)-x)^{2}\big],\ \ \ t\in[-\tau,T_{max}),\ x\in[h(t)-M^{-1},h(t)]
\]
where
\[
M:=\max\Big\{\sqrt{d},\ \frac{2}{h(-\tau)-g(-\tau)},\ \frac{4}{3u^*}\max_{-\tau\leqslant \theta\leqslant 0}\|\phi(\theta,\cdot)\|_{C^1([g(\theta),h(\theta)])}\Big\}.
\]
It follows the proof of \cite[Lemma 2.2]{DuLin} to prove that there is a constant $C_0$ independent on $T_{max}$
such that $h'(t)\leqslant C_0$ for $t\in (0, T_{max})$. The proof for $-g'(t)\leqslant C_0$ for $t\in (0, T_{max})$ is parallel.

Let us  now fix $\epsilon\in(0,T_{max})$. Similar to the proof of Theorem \ref{thm:local}, by standard $L^p$ estimate, the Sobolev
embedding theorem and the H\"{o}lder estimates for parabolic equation, we can find $C_1>0$ depending only on $\epsilon$, $T_{max}$,
$u^*$, $ h_0$,  $\| \phi\|_{C^{1,2}([-\tau,0]\times[g(\theta),h(\theta)])}$ and $C_0$ such that
\[
||u||_{C^{1+\alpha/2,2+\alpha}([\varepsilon,T_{max}]\times[g(t), h(t)])}\leqslant C_1.
\]
This implies that $(u,g,h)$ exists on $[0,T_{max}]$. Choosing $t_n\in(0,T_{max})$ with $t_n\nearrow T_{max}$, and regarding
$(u(t_n-\theta, x), h), g(t_n-\theta), h(t_n-\theta))$ for $\theta\in[0,\tau]$ as the initial function, it then follows from
the proof of Theorem \ref{thm:local} that there exists $s_0>0$ depending on $C_0$, $C_1$ and $u^*$ independent of $n$ such that
problem \eqref{p} has a unique solution $(u, g, h)$ in $[t_n, t_n+s_0]$. This yields that the solution $(u,g,h)$ of \eqref{p}
can be extended uniquely to $[0,t_n+s_0)$. Hence $t_n+s_0>T_{max}$ when $n$ is large. But this contradicts the assumption,
which ends the proof of this lemma.
\end{proof}

\smallskip

\noindent
 {\bf Proof of Theorem \ref{wellposedness}:} Combining Theorem \ref{thm:local} and Lemma  \ref{lem:global}, we complete the proof.{\hfill $\Box$}
\subsection{Comparison Principle}\label{subsec:cp}
In this subsection, we establish the comparison principle, which will be used in the rest of this paper. Let us
 start with the following result.
\begin{lem}
\label{lem:comp1} Suppose that {\bf{(H)}} holds, $T\in (0,\infty)$,
$\overline g,\ \overline h\in C^1([-\tau,T])$, $\overline u\in C(\overline D_T)
\cap C^{1,2}(D_T)$ satisfies $\overline u \leqslant u^*$ in $\overline D_T$
with $D_T=\{(t,x)\in\R^2: -\tau<t\leqslant T,\ \overline g(t)<x<\overline h(t)\}$, and
\begin{eqnarray*}
\left\{
\begin{array}{lll}
\overline u_{t} \geqslant  \overline u_{xx} -d \overline u+f(\overline u(t-\tau, x)),\; & 0<t \leqslant T,\
\overline g(t)<x<\overline h(t), \\
\overline u= 0,\quad \overline g'(t)\leqslant -\mu \overline u_x,\quad &
0<t \leqslant T, \ x=\overline g(t),\\
\overline u= 0,\quad \overline h'(t)\geqslant -\mu \overline u_x,\quad
&0<t \leqslant T, \ x=\overline h(t).
\end{array} \right.
\end{eqnarray*}
If $[g(\theta), h(\theta)]\subseteq [\overline g(\theta), \overline h(\theta)]$ for $\theta\in[-\tau,0]$ and $\overline u(\theta,x)\in C^{1,2}([-\tau,0]\times[\overline g(\theta),
\overline h(\theta)])$ satisfies
\[
\phi(\theta,x)\leqslant \overline u(\theta,x) \leqslant u^*\ \ \mbox{ in } [-\tau,0]\times[g(\theta),h(\theta)],
\]
then the solution $(u,g, h)$ of problem \eqref{p} satisfies $g(t)\geqslant \overline g(t)$, $h(t)\leqslant \overline h(t)$ in $(0,T]$, and
\begin{align*}
u(t,x)\leqslant  \overline u(t,x)\ \ \mbox{ for } (t,x)\in (0, T]\times(g(t), h(t)).
\end{align*}
\end{lem}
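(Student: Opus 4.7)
\medskip

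\noindent\textbf{Proof proposal.} The plan is to combine the classical free boundary comparison argument (maximum principle in the interior plus Hopf's lemma at a would-be first touching of the free boundaries) with an induction on the delay intervals $[0,\tau], [\tau,2\tau], \dots$, using the monotonicity of $f$ on $[0,u^*]$ given by hypothesis $\mathbf{(H)}$ to handle the delayed reaction term.

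I would begin by reducing to the first delay interval. Set
\[
T_1 := \min\{T,\tau\},\qquad T^* := \sup\bigl\{t\in(0,T_1]\colon g(s)\geqslant \overline g(s)\ \text{and}\ h(s)\leqslant \overline h(s)\ \text{for all } s\in[0,t]\bigr\}.
\]
By continuity and the strict initial inclusion at $\theta=0$ (which I would first check follows from the hypothesis and Hopf applied to $\overline u - \phi$ at $x=g(0),h(0)$, or by a small perturbation argument replacing $\overline u$ by $\overline u + \varepsilon e^{Kt}$ and letting $\varepsilon\downarrow0$ at the end), we have $T^*>0$. On $0\leqslant t\leqslant T^*$, the function $w:=\overline u - u$ is defined on the smaller domain $(g(t),h(t))$. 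Since $t-\tau\leqslant 0$ for $t\in[0,T_1]$, the delayed value $u(t-\tau,x)=\phi(t-\tau,x)\leqslant \overline u(t-\tau,x)$ by hypothesis, and the monotonicity of $f$ on $[0,u^*]$ yields $f(\overline u(t-\tau,x))\geqslant f(u(t-\tau,x))$. Therefore $w$ satisfies
\[
w_t - w_{xx} + d\,w \geqslant f(\overline u(t-\tau,x)) - f(u(t-\tau,x)) \geqslant 0
\]
on $(0,T^*]\times(g(t),h(t))$, with $w(t,g(t))=\overline u(t,g(t))\geqslant 0$, $w(t,h(t))=\overline u(t,h(t))\geqslant 0$, and $w(0,\cdot)\geqslant 0$. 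The standard parabolic maximum principle gives $w\geqslant 0$, i.e., $u\leqslant \overline u$ on $[0,T^*]\times[g(t),h(t)]$.

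Next I would rule out $T^*<T_1$ by the free-boundary Hopf argument. If $T^*<T_1$, then at least one of $g(T^*)=\overline g(T^*)$ or $h(T^*)=\overline h(T^*)$ holds; take the latter without loss of generality. Then $w\geqslant 0$ on its domain, $w(T^*,h(T^*))=0$, and the point $(T^*,h(T^*))$ is a boundary minimum of $w$ at which the interior ball condition is available (since $h(t)<\overline h(t)$ for $t<T^*$ allows one to fit a parabolic ball inside the domain where $w$ is defined, or alternatively extend the argument using the domain of $\overline u$). By the strong parabolic maximum principle $w>0$ in the interior, and Hopf's lemma gives $w_x(T^*,h(T^*))<0$, i.e., $\overline u_x(T^*,h(T^*))<u_x(T^*,h(T^*))$, whence
\[
h'(T^*) = -\mu u_x(T^*,h(T^*)) < -\mu\, \overline u_x(T^*,\overline h(T^*)) \leqslant \overline h'(T^*).
\]
On the other hand, $h(t)<\overline h(t)$ on $[0,T^*)$ with equality at $T^*$ forces $h'(T^*)\geqslant \overline h'(T^*)$, a contradiction. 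Hence $T^*=T_1$.

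Finally, I would iterate. Having established the comparison on $[0,\tau]$ (in particular $u(t-\tau,x)\leqslant \overline u(t-\tau,x)$ whenever $t-\tau\in[0,\tau]$), the same argument applies verbatim on $[\tau,2\tau]\cap[0,T]$, since the previous step provides the pointwise inequality on the delayed data needed to invoke monotonicity of $f$. After finitely many steps the entire interval $[0,T]$ is covered. The main technical obstacle is the Hopf step at the touching time of the free boundaries: one has to be careful that $w$ is indeed $\geqslant 0$ in a one-sided parabolic neighborhood of $(T^*,h(T^*))$ sitting inside the domain of $u$, and that the strict inequality passes through the Stefan condition. The standard remedy, which I would use if needed, is to compare $u$ against a slightly enlarged supersolution $\overline h_\varepsilon(t)=\overline h(t)+\varepsilon(1+t)$, $\overline g_\varepsilon(t)=\overline g(t)-\varepsilon(1+t)$, with $\overline u$ adjusted accordingly, obtain strict inclusion $g>\overline g_\varepsilon$, $h<\overline h_\varepsilon$, and then let $\varepsilon\downarrow 0$.
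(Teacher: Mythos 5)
Your approach is essentially the same as the paper's: set up a first-touching-time argument, use the monotonicity of $f$ to get a differential inequality for $\overline u-u$ that the maximum principle can handle, and close the argument at the touching time via Hopf's lemma applied to the Stefan condition. Both routes are regularizations of the same underlying idea, so you have the right skeleton.

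There are two places where you diverge from the paper in ways worth noting. First, your split into delay intervals $[0,\tau],[\tau,2\tau],\dots$ is harmless but unnecessary. The paper runs the first-touching argument once over all of $(0,T]$: at the candidate failure time $t^*$, the delayed argument lies in $[t^*-\tau,t^*)\cap[0,T]$, where the order $u_\epsilon<\overline u$ is already in force by minimality of $t^*$, so $f(\overline u(t-\tau,x))\geqslant f(u_\epsilon(t-\tau,x))$ holds throughout $\Omega_{t^*}$ without any separate treatment of $t\leqslant\tau$. Second — and this is the more substantive point — you perturb the \emph{supersolution} (either $\overline u\mapsto\overline u+\varepsilon e^{Kt}$ or $\overline h\mapsto\overline h+\varepsilon(1+t)$ ``with $\overline u$ adjusted accordingly''). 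The first option breaks the Dirichlet condition $\overline u=0$ at $x=\overline g(t),\overline h(t)$, so the Stefan-condition step at the touching point is no longer the clean computation you wrote; the second option requires extending $\overline u$ to a larger domain while preserving all three supersolution inequalities, which you have not spelled out and which is not automatic. The paper instead perturbs the \emph{subsolution}: it takes $g_\epsilon(\theta)=(1-\epsilon)g(\theta)$, $h_\epsilon(\theta)=(1-\epsilon)h(\theta)$, $\mu_\epsilon=(1-\epsilon)\mu$, $\phi_\epsilon\leqslant\phi$, and lets $(u_\epsilon,g_\epsilon,h_\epsilon)$ be the actual solution of \eqref{p} with this modified data. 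This has two payoffs: it automatically gives strict initial separation from $(\overline u,\overline g,\overline h)$ (so the ``$T^*>0$'' step is free, with no need for your Hopf-at-$t=0$ discussion, which in any case does not follow from the stated hypothesis since $g(0)=\overline g(0)$ and $h(0)=\overline h(0)$ is allowed), and it produces the genuinely strict inequality $h_\epsilon'(t^*)=-\mu_\epsilon(u_\epsilon)_x<-\mu\overline u_x\leqslant\overline h'(t^*)$ directly from $\mu_\epsilon<\mu$, without needing Hopf to give a strict gradient gap at the corner. The passage $\epsilon\downarrow0$ is then justified by continuous dependence of the solution of \eqref{p} on its data. I would revise your remedy paragraph along these lines.
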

\begin{proof}
We integrate the ideas of \cite[Lemma 5.7]{DuLin} and \cite[Corollary 5]{MS} to deal with free boundary and time delay.

Firstly, for small $\epsilon>0$, let $(u_\epsilon,g_\epsilon,h_\epsilon)$ denote the unique solution of \eqref{p} with $g(\theta)$ and $h(\theta)$ replaced by
$g_\epsilon(\theta):=g(\theta)(1-\epsilon)$ and $h_\epsilon(\theta):=h(\theta)(1-\epsilon)$ for $\theta\in[-\tau,0]$, respectively, with $\mu$ replaced by
$\mu_\epsilon:=\mu(1-\epsilon)$, and with $\phi(\theta,x)$ replaced by some $\phi_\epsilon(\theta,x)\in C^{1,2}([-\tau,0]\times[g_\epsilon(\theta),h_\epsilon(\theta)])$, satisfying
\[
0<\phi_\epsilon(\theta,x)\leqslant \phi(\theta,x),\ \ \ \phi_\epsilon(\theta,g_\epsilon(\theta))=\phi_\epsilon(\theta,h_\epsilon(\theta))=0 \ \ \mbox{ for }\
\theta\in[-\tau,0],\ x\in[g_\epsilon(\theta),h_\epsilon(\theta)],
\]
and for any fixed $\theta\in[-\tau,0]$ as $\epsilon\to 0$, $\phi_\epsilon (\theta,x )\to \phi(\theta,x)$
in the $C^2([g(\theta),h(\theta)])$ norm.

We claim that $h_\epsilon(t)<\overline h(t)$, $g_\epsilon(t)> \overline g(t)$ and $u_\epsilon(t,x)<\overline u(t,x)$ for all $t\in[0,T]$
and $x\in[g_\epsilon(t),h_\epsilon(t)]$. Obviously, this is true for all small $t>0$. Now, let us use an indirect argument and suppose that
the claim does not hold, then there exists a first $t^*\in(0,T]$ such that
\begin{align*}
 u_\epsilon(t,x)< \overline{u}(t,x)\ \  \mbox{ for }\  t\in [0, t^*),\ x\in [g_\epsilon(t), h_\epsilon(t)]\subset (\overline {g}(t), \overline {h}(t)),
\end{align*}
and there is some $x^*\in[g_\epsilon(t^*),h_\epsilon(t^*)]$ such that $u_\epsilon(t^*,x^*)=\overline{u}(t^*,x^*)$.

Later, let us compare $u_\epsilon$ and $\overline u$ over the region
\[
\Omega_{t^*}:=\{(t,x)\in\R^2: 0<t\leqslant t^*,\ g_\epsilon(t)< x < h_\epsilon(t)\}.
\]
An direct computation shows that for $(t,x)\in \Omega_{t^*}$,
\[
(\overline{u}-u_\epsilon)_t- (\overline{u}-u_\epsilon)_{xx}+d (\overline{u}-u_\epsilon)\geqslant
f(\overline{u}(t-\tau,x))-f(u_\epsilon(t-\tau,x))\geqslant 0,
\]
it then follows from the strong maximum principle that
\begin{equation}\label{mao1}
u_\epsilon(t,x)<\overline{u}(t,x)\ \ \mbox{ in } \Omega_{t^*}.
\end{equation}
Thus either $x^*=h_\epsilon(t^*)$ or $x^*=g_\epsilon(t^*)$. Without loss of generality we may assume that
$x^*=h_\epsilon(t^*)$, then $\overline{u}(t^*,h_\epsilon(t^*))=u_\epsilon(t^*,h_\epsilon(t^*))=0$.
This, together with \eqref{mao1}, implies that $\overline{u}_x(t^*,h_\epsilon(t^*))\leqslant (u_\epsilon)_x(t^*,h_\epsilon(t^*))$,
from which we obtain that
\begin{equation}\label{uhq}
h'_\epsilon(t^*)=-\mu_\epsilon(u_\epsilon)_x(t^*,h_\epsilon(t^*))<-\mu \overline{u}_x(t^*,h_\epsilon(t^*))= \overline{h}'(t^*).
\end{equation}
As $h_\epsilon(t)< \overline{h}(t)$ for $t\in[0,t^*)$ and $h_\epsilon(t^*)=\overline{h}(t^*)$, then $h'_\epsilon(t^*)\geqslant \overline{h}'(t^*)$,
which contradicts \eqref{uhq}. This proves our claim.

Finally, thanks to the unique solution of \eqref{p} depending continuously on the parameters in \eqref{p}, as $\epsilon \to 0$,
$(u_\epsilon,g_\epsilon,h_\epsilon)$ converges to $(u,g,h)$, the unique of solution of \eqref{p}. The desired result then follows
by letting $\epsilon\to 0$ in the inequalities $u_\epsilon< \overline{u},\ g_\epsilon> \overline{g}$ and $h_\epsilon< \overline{h}$.
\end{proof}

By slightly modifying the proof of Lemma \ref{lem:comp1}, we obtain a variant of Lemma \ref{lem:comp1}.
\begin{lem}
\label{lem:comp2} Suppose that {\bf {(H)}} holds, $T\in
(0,\infty)$, $\overline g,\, \overline h\in C^1([-\tau,T])$, $\overline
u\in C(\overline D_T)\cap C^{1,2}(D_T)$ satisfies $\overline u \leqslant u^*$ in $\overline D_T$
with $D_T=\{(t,x)\in\R^2: -\tau<t\leqslant T,\ \overline g(t)<x<\overline h(t)\}$, and
\begin{eqnarray*}
\left\{
\begin{array}{lll}
\overline u_{t} \geqslant  \overline u_{xx} -d \overline u+f(\overline u(t-\tau, x)),\; &0<t \leqslant T,\
\overline g(t)<x<\overline h(t), \\
\overline u\geqslant u, &0<t \leqslant T, \ x= \overline g(t),\\
\overline u= 0,\quad \overline h'(t)\geqslant -\mu \overline u_x,\quad
&0<t \leqslant T, \ x=\overline h(t),
\end{array} \right.
\end{eqnarray*}
with $\overline g(t)\geqslant g(t)$ in $[0,T]$, $h(\theta)\leqslant \overline
h(\theta)$, $\phi(\theta,x)\leqslant \overline u(\theta,x)$ for $\theta\in[-\tau,0]$ and
$x\in[\overline g(\theta),h(\theta)]$, where  $(u,g, h)$ is a solution to \eqref{p}. Then
\[
\mbox{ $h(t)\leqslant \overline h(t)$ in $(0, T]$,\quad $u(x,t)\leqslant
\overline u(x,t)$ for $(t,x)\in (0, T]\times(g(t),h(t))$.}
\]
\end{lem}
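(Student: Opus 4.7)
The plan is to adapt the proof of Lemma~\ref{lem:comp1} to the weaker hypothesis on the left boundary, where $\overline u$ is no longer required to vanish with a Stefan condition but only to dominate $u$ pointwise at $x=\overline g(t)$. As before, I would introduce a perturbed solution $(u_\ve,g_\ve,h_\ve)$ of \eqref{p} with coefficient $\mu_\ve:=\mu(1-\ve)$, history domains $[g_\ve(\theta),h_\ve(\theta)]\Subset[g(\theta),h(\theta)]$, and initial profile $\phi_\ve\leqslant\phi$ approximating $\phi$ in $C^2$. Continuous dependence of \eqref{p} on its data gives $(u_\ve,g_\ve,h_\ve)\to(u,g,h)$ as $\ve\to 0$. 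Moreover, since $-\mu u_x>0$ at the endpoints, $u$ itself is a supersolution of \eqref{p} with coefficient $\mu_\ve<\mu$, so a direct application of Lemma~\ref{lem:comp1} yields $u_\ve\leqslant u$ and $[g_\ve(t),h_\ve(t)]\subset(g(t),h(t))$ on $[0,T]$. In particular, whenever $\overline g(t)\in[g_\ve(t),h_\ve(t)]$ we have $u_\ve(t,\overline g(t))\leqslant u(t,\overline g(t))\leqslant\overline u(t,\overline g(t))$.

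The central claim is that $h_\ve(t)<\overline h(t)$ and $u_\ve(t,x)<\overline u(t,x)$ for all $t\in(0,T]$ on the common cross section $\Omega^\ve_t:=(\max\{g_\ve(t),\overline g(t)\},\,h_\ve(t))$. I argue by contradiction and let $t^*\in(0,T]$ be the first contact time. For $0<t<t^*$, the difference $w:=\overline u-u_\ve$ obeys
\[
w_t-w_{xx}+dw\geqslant f(\overline u(t-\tau,x))-f(u_\ve(t-\tau,x))\geqslant 0,
\]
since by the monotonicity of $f$ on $[0,u^*]$ from \textbf{(H)} and the minimality of $t^*$, the delayed argument satisfies $u_\ve(t-\tau,\cdot)\leqslant\overline u(t-\tau,\cdot)$ (on the history interval from the initial-data comparison, and on $(0,t^*)$ by definition of $t^*$). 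The strong maximum principle then forces $w>0$ in the interior of the parabolic cylinder up to $t^*$.

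The contact at $t^*$ must therefore occur on the parabolic boundary. On the right, if $h_\ve(t^*)=\overline h(t^*)$ then both $u_\ve$ and $\overline u$ vanish there with $w>0$ immediately inside, so Hopf's lemma delivers $w_x(t^*,h_\ve(t^*))<0$; combined with $\mu_\ve<\mu$ this gives
\[
h_\ve'(t^*)=-\mu_\ve(u_\ve)_x(t^*,h_\ve(t^*))<-\mu\,\overline u_x(t^*,\overline h(t^*))\leqslant\overline h'(t^*),
\]
contradicting $h_\ve(t)<\overline h(t)$ on $[0,t^*)$. On the left side of $\Omega^\ve_t$ I split into cases: if $g_\ve(t^*)\geqslant\overline g(t^*)$ then $u_\ve=0\leqslant\overline u$ at $x=g_\ve(t^*)$ with strict inequality by the interior strong MP; if $g_\ve(t^*)<\overline g(t^*)$ then at $x=\overline g(t^*)$ we have $u_\ve\leqslant u\leqslant\overline u$, with strict inequality furnished by the perturbation $\phi_\ve<\phi$ and the strict interior gap $u_\ve<u$ propagated by the strong maximum principle. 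No contact is possible, and letting $\ve\to 0$ in $h_\ve\leqslant\overline h$ and $u_\ve\leqslant\overline u$ yields the desired conclusion.

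The main obstacle I foresee is the left-boundary bookkeeping: since $\overline g$ is now a Dirichlet-type boundary whose position relative to the perturbed free boundary $g_\ve$ is not controlled by the hypotheses, the argument must combine the assumption $\overline u\geqslant u$ at $x=\overline g(t)$ with the auxiliary comparison $u_\ve\leqslant u$ supplied by a preliminary application of Lemma~\ref{lem:comp1}. Once this case analysis is in place, the rest of the argument runs in parallel with the proof of that lemma.
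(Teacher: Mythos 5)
Your proposal is correct and is essentially the elaboration of what the paper intends when it says Lemma~\ref{lem:comp2} follows ``by slightly modifying the proof of Lemma~\ref{lem:comp1}.'' The two-step organization — first invoking Lemma~\ref{lem:comp1} itself to compare the $\varepsilon$-perturbed solution $u_\varepsilon$ (with coefficient $\mu_\varepsilon=\mu(1-\varepsilon)$) against $u$, and then running the first-contact argument against $\overline u$ on the common domain with a case split at the Dirichlet-type left boundary $x=\overline g(t)$ — is precisely the bookkeeping the ``slight modification'' requires, and the Hopf/Stefan contradiction at the right free boundary is unchanged from Lemma~\ref{lem:comp1}.
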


\begin{remark}
\label{rem5.8}\rm The function $\overline u$, or the triple
$(\overline u,\overline g,\overline h)$, in Lemmas \ref{lem:comp1}
and \ref{lem:comp2} is often called a supersolution to \eqref{p}.
A subsolution can be defined analogously by reversing all the
inequalities. There is a symmetric version of Lemma~\ref{lem:comp2},
where the conditions on the left and right boundaries are
interchanged. We also have corresponding comparison results for
lower solutions in each case.
\end{remark}

\section{Semi-waves}\label{subsec:semiwave}

This section is devoted to proving the existence and uniqueness of a semi-wave $q(z)$ of \eqref{sw11}, which will be used
to construct some suitable sub- and supersolutions to study the asymptotic profiles of spreading solutions of \eqref{p}.
Let us consider the following nonlocal elliptic problem
\begin{equation}\label{semiwave}
\left\{
 \begin{array}{ll}
  q'' - cq'-d q+ f( q(z-c\tau))=0, & z>0,\\
 q(z)=0,  & z\leqslant 0,\\
 \end{array}
 \right.
\end{equation}
where $c\geqslant 0$ is a constant.

If  $z$ is understood as the time variable, then we may regard problem \eqref{semiwave} as a time-delayed dynamical system in the phase space $C([-c\tau,0],\R^2)$. When $c\tau=0$, the phase space reduces to $\R^2$ and it follows from the phase plane analysis that \eqref{semiwave} admits a unique positive solution $q_0(z)$, which is increasing in $z$ and $q_0(z)\to u^*$ as $z\to \infty$. When $c\tau>0$, the phase space is of infinite dimension and the positivity and boundedness of the unique solution are not clear.

\begin{prop}\label{prop:semiwave}
Suppose {\bf{(H)}} holds. For any given constant $c> 0$, problem \eqref{semiwave} has a maximal nonnegative solution $q_c$. Moreover,
either $q_c(z)\equiv 0$ or $q_c(z)> 0$ in $(0,\infty)$. Furthermore, if $q_c>0$, then it is the unique positive solution of \eqref{semiwave},
$q_c'(z)>0$ in $(0,\infty)$ and $q_c(z)\to u^*$ as $z\to\infty$, in addition, for any given constant $c_1<c$, one has $q_c(z)<q_{c_1}(z)$
for $z\in(0,\infty)$, and  $q'_c(0)<q'_{c_1}(0)$.
\end{prop}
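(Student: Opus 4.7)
\emph{Construction of $q_c$.} My plan is to build $q_c$ as the monotone limit of an iteration from the constant supersolution $u^*$ and to read off its qualitative properties from the linear operator $L w := -w'' + c w' + d w$ combined with the strong maximum principle. Put $q^{(0)} \equiv u^*$ on $(0,\infty)$ (extended by $0$ on $(-\infty, 0]$), and define $q^{(n+1)}$ as the unique bounded solution of
\[
Lq^{(n+1)} = f\bigl(q^{(n)}(\cdot - c\tau)\bigr),\quad z>0,\qquad q^{(n+1)}(0)=0.
\]
Since $d>0$, $L$ satisfies a maximum principle; together with the monotonicity of $f$ on $[0,u^*]$, induction yields $0\leqslant q^{(n+1)}\leqslant q^{(n)}\leqslant u^*$, and a parallel induction on the shifts $q^{(n)}(\cdot+\eta)-q^{(n)}(\cdot)$ shows that each iterate is nondecreasing. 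The limit $q_c:=\lim_n q^{(n)}$ is then a bounded, nondecreasing, nonnegative solution of \eqref{semiwave}; any other nonnegative solution bounded by $u^*$ satisfies $q\leqslant q^{(n)}$ inductively, giving maximality.

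\emph{Dichotomy, strict monotonicity, limit at infinity.} On $(0,c\tau]$ the delay term vanishes and $q_c$ solves $q''-cq'-dq=0$ with $q(0)=0$; the general solution is $q_c(z) = A(e^{\lambda_+ z} - e^{\lambda_- z})$ with $\lambda_\pm$ the two real roots of $\lambda^2 - c\lambda - d=0$, and nonnegativity forces $A\geqslant 0$. If $A=0$ the zero Cauchy data at $c\tau$ propagate forward to give $q_c\equiv 0$; if $A>0$, then $q_c>0$ on $(0,c\tau]$ and $q_c'(0) = A(\lambda_+ - \lambda_-)>0$, and iterating the strong maximum principle on $((k-1)c\tau, kc\tau]$, where $Lq_c=f(q_c(\cdot-c\tau))\geqslant 0$, extends positivity to all of $(0,\infty)$. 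Strict monotonicity $q_c'>0$ follows from the nondecreasing property: any plateau $q_c\equiv m$ on $(a,b)$ would force $f(q_c(\cdot-c\tau))\equiv dm$ there, and strict monotonicity of $f$ would propagate the plateau backwards to $(a-c\tau, b-c\tau)$, eventually contradicting $q_c(0)=0<m$. Boundedness together with monotonicity gives $q_c(z)\to L\in (0,u^*]$; standard $C^2$ estimates force $q_c', q_c''\to 0$, so the equation at the limit reads $-dL+f(L)=0$, hence $L=u^*$.

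\emph{Uniqueness and comparison in $c$.} For uniqueness let $q$ be any positive bounded solution and set
\[
\alpha^*:=\inf\bigl\{\alpha\geqslant 0 : q_c(z+\alpha)\geqslant q(z)\text{ for all }z\geqslant 0\bigr\},
\]
which is finite because $q_c(\cdot+\alpha)\to u^*\geqslant q$ uniformly. If $\alpha^*>0$, set $w(z):=q_c(z+\alpha^*)-q(z)$; the monotonicity of $q_c$ and of $f$ give $Lw\geqslant 0$ on $(0,\infty)$ and $w(0) = q_c(\alpha^*)>0$, so the strong maximum principle rules out an interior zero of $w$, while a touching at $+\infty$ is excluded by the exponential rate at which $u^*-q_c$ decays, a rate dictated by the roots of $\lambda^2-c\lambda-d+f'(u^*)e^{-\lambda c\tau}=0$. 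Thus $\alpha^*$ may be decreased, a contradiction, so $\alpha^*=0$ and $q\leqslant q_c$, whence $q=q_c$ by maximality. For $c_1<c$ a direct substitution yields
\[
q_{c_1}'' - cq_{c_1}' - dq_{c_1} + f\bigl(q_{c_1}(\cdot-c\tau)\bigr) = (c_1-c)q_{c_1}' + \bigl[f(q_{c_1}(\cdot-c\tau))-f(q_{c_1}(\cdot-c_1\tau))\bigr] < 0,
\]
so $q_{c_1}$ is a strict supersolution of the $c$-problem; the same sliding argument forces $q_c\leqslant q_{c_1}$, the strong maximum principle upgrades this to strict inequality, and Hopf's lemma applied to $q_{c_1}-q_c$ at $z=0$ delivers $q_c'(0)<q_{c_1}'(0)$.

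\emph{Main obstacle.} The hardest step is the uniqueness: the nonlocality carried by the delay $c\tau$ makes the sliding comparison subtle when the shift crosses the origin, and excluding a touching at $+\infty$ genuinely requires quantitative information on the decay rate of $u^*-q_c$ obtained from the characteristic equation at $u^*$, which must be imported from the spectral analysis developed elsewhere in Section~\ref{subsec:semiwave}.
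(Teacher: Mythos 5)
Your construction of $q_c$ by a decreasing monotone iteration $q^{(0)}\equiv u^*$, $Lq^{(n+1)}=f(q^{(n)}(\cdot-c\tau))$, is a valid and arguably cleaner alternative to the paper's approach (which constructs $q_c$ as a decreasing limit of solutions $w^l$ on bounded intervals $(0,l)$ with $w^l(l)=u^*$, then proves maximality through a rather intricate comparison with scaled $Mw^l$). Your explicit Cauchy-propagation argument on $(0,c\tau]$ for the dichotomy, and your observation that the delay term vanishes there so one can solve the linear second-order ODE explicitly, are also correct and more direct than the paper's reflection argument. On the other end, your plateau argument only rules out intervals on which $q_c$ is constant; to upgrade ``$q_c$ nondecreasing with no plateau'' to ``$q_c'>0$ pointwise'' one still needs a strong-maximum-principle step for $p:=q_c'$ (which satisfies $p''-cp'-dp\leqslant 0$, $p\geqslant 0$), although this is a small fix.

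The genuine gap is the uniqueness. Your sliding parameter is
\[
\alpha^*=\inf\{\alpha\geqslant 0:\ q_c(z+\alpha)\geqslant q(z)\ \forall z\geqslant 0\},
\]
but $\alpha=0$ is already in this set by the maximality you established via the iteration ($q\leqslant q_c$), so $\alpha^*=0$ trivially and the case $\alpha^*>0$ you analyse never occurs. The sliding therefore proves nothing that maximality did not already give, and the conclusion ``$q\leqslant q_c$, whence $q=q_c$ by maximality'' is a non sequitur: maximality yields only the inequality, not equality. Since the semi-wave problem \eqref{semiwave} is not translation invariant (there is a boundary at $z=0$), the translation sliding used for full-line traveling waves does not directly carry over here; one would at least need a second, reversed sliding $\inf\{\beta\geqslant 0:\ q(z+\beta)\geqslant q_c(z)\}=0$, which in turn requires that an \emph{arbitrary} positive solution $q$ be strictly increasing with $q(\infty)=u^*$ — properties your iteration establishes only for the maximal solution $q_c$. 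The paper resolves this by first proving, for \emph{every} positive solution, positivity of $q_+'(0)$, strict monotonicity, and $q(\infty)=u^*$ via a moving-plane style reflection argument (the $\gamma^*$ construction in Step 2), and then running a ratio comparison
\[
\rho^*:=\inf_{z>0}\frac{q_1(z)}{q_2(z)},
\]
using L'H\^opital at $z=0$, the limit $q_i(\infty)=u^*$, the sublinearity $f(s)/s$ decreasing to get $q_1''-cq_1'-dq_1-\rho^*(\,\cdot\,)\leqslant 0$ for $w=q_1-\rho^*q_2$, and Hopf's lemma to force $\rho^*=1$. That argument sidesteps entirely the delicate touching-at-infinity analysis which you acknowledge but do not carry out, and it is the step you would need to either import or replace. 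This gap also propagates to your final paragraph, since the conclusion $q_c\leqslant q_{c_1}$ from the strict-supersolution computation again invokes uniqueness of the solution between $q_c$ and $u^*$.
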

\begin{proof}
We divide the proof into four steps.

\smallskip

$ Step \ 1$. Problem \eqref{semiwave} always has a maximal nonnegative solution $\overline{q}$ and it satisfies
\[
\overline{q}\leqslant u^*\ \ \mbox{ for } z\in[0,\infty).
\]
Clearly, $0$ is a nonnegative solution of \eqref{semiwave}. For any $l>0$, consider the following problem:
\begin{equation}\label{semiwavel}
\left\{
 \begin{array}{ll}
   w'' - cw'-d w+ f( w(z-c\tau))=0, & 0<z<l,\\
w(l)=u^*,\ \ \  w(z)=0,  \ \  z\leqslant 0.
 \end{array}
 \right.
\end{equation}
It is well known problem \eqref{semiwavel} admits a unique solution $w^l(z)>0$ for $z\in(0,l]$. Applying the maximal
principle, we can deduce that $w^l(z)\leqslant u^*$ for $z\in[0,l]$. Moreover, it is easy to check that $w^l(z)$ is
decreasing in $l>0$ and increasing in $z\in[0,l]$ and
\[
w^l(z)\to W(z)\ \ \mbox{ as } l\to\infty,
\]
where $W(z)$ is a nonnegative solution of problem \eqref{semiwave} and it satisfies $W(z)\leqslant u^*$ for $z\in[0,\infty)$.

In what follows, we want to prove that $W$ is the maximal nonnegative solution of \eqref{semiwave}. Let $q$ be an arbitrary
nonnegative solution of \eqref{semiwave}, then $q(z)\leqslant u^*$ for $z\in[0,\infty)$. If $q\equiv 0$, then $q\leqslant W$.
Suppose now $q\geqslant, \not\equiv 0$, then $q>0$ in $(0,\infty)$. Let us show $q(z)\leqslant W(z)$ for $z\in[0,\infty)$.

Firstly, for any fixed $l>0$ we can find $M>0$ large such that $Mw^l(z)\geqslant q(z)$ for $z\in[0,l]$. We claim that the above
inequality holds for $M=1$. On the contrary, define
\[
M_0:=\inf\{M>0:\ Mw^l(z)\geqslant q(z)\ \ \mbox{ for } z\in[0,l]\},
\]
then $M_0>1$ and $M_0w^l(z)\geqslant,\not\equiv q(z)$ for $z\in[0,l]$. Thanks to the monotonicity of $w^l(z)$ in $z\in[0,l]$,
then there is $z_0\in(0,l)$ such that $M_0w^l(z_0)=u^*$ and $M_0w^l(z)<u^*$ for $z\in[0,z_0)$. It is easy to check that $q(z_0)<u^*$.
Then the strong maximal principle yields that $M_0 (w^l)'(0)>q'(0)$ and $M_0w^l(z)>q(z)$ for $z\in(0,z_0]$. Thus we can find a constant
$0<\epsilon\ll1$ such that
\begin{equation}\label{Mqw}
M_1:=M_0(1+\epsilon)^{-1}>1, \ \ M_1w^l(z)>q(z)\ \ \mbox{ for } z\in(0,z_0],
\end{equation}
and$ M_1w^l(z_0+\tilde{z})> u^*$ for $\tilde{z}=\min\{c\tau,\ l-z_0\}$.
So there is $z_1\in(0,\tilde{z}]$ such that $M_1w^l(z_0+z_1)= u^*$ and $M_1w^l(z_0+z)> u^*$ for $z\in (z_1,l-z_0]$.

Later, we want to prove that $M_1 w^l(z)>q(z)$ for all $z\in(z_0,l]$. Combining the definition
of $z_1$, we only need to prove $M_1 w^l(z)\geqslant q(z)$ for all $z\in(z_0,z_0+z_1]$. Since $M_1 w^l(z)\geqslant
q(z)$ for $z=z_0+z_1$ and  $z=z_0$, and for $z\in(z_0,z_0+z_1)$,
\begin{eqnarray*}
&& \big(M_1w^l-q\big)''-c\big(M_1w^l-q\big)'-d \big(M_1 w^l-q\big)\\
&=& f(q(z-c\tau))-M_1f\big( w^l(z-c\tau)\big)\\
& \leqslant& f(q(z-c\tau))-f\big(M_1 w^l(z-c\tau)\big) \leqslant 0,
\end{eqnarray*}
where the monotonicity of $f(v)$ in $v\in[0,u^*]$ and the fact where $M_1w^l(z-c\tau)\geqslant q(z-c\tau)$ for $z\leqslant z_0+z_1$
are used. The comparison principle yields that $M_1 w^l(z)\geqslant q(z)$ for all $z\in[z_0,z_0+z_1]$. This, together with
the definition of $z_1$ and \eqref{Mqw}, yields that $M_1 w^l(z)\geqslant q(z)$ for all $z\in(0,l]$, which contradicts the definition
of $M_0$. Thus we have proved that $w^l(z)\geqslant q(z)$ for $z\in[0,l]$.

Finally, letting $l\to\infty$, we deduce that
\[
W(z)\geqslant q(z)\ \ \mbox{ for } z\in[0,\infty),
\]
as we wanted. Thus Step 1 is proved.

\smallskip

$ Step \ 2$. For any $c\geqslant 0$, if $q$ is a positive solution of \eqref{semiwave}, then $q_+'(0)>0$, $q'(z)>0$ for $z\in(0,\infty)$,
and $q(z)\to u^*$ as $z\to\infty$.

Since $q>0$ for $z>0$, then the Hopf lemma can be used to deduce $q_+'(0)>0$, it follows that $q'(z)>0$ for all small $z>0$.
Setting
\[
\gamma^*:=\sup\{\gamma>0:\ q(2\gamma-z)>q(z)\ \mbox{ for } z\in[0,\gamma),\ \ q'(z)>0\ \mbox{ for } z\in(0,\gamma]\}.
\]
In the following, we shall show $\gamma^*=\infty$. Suppose by way of contradiction that $\gamma^*\in(0,\infty)$, then
\[
q(2\gamma^*-z)\geqslant q(z), \ \mbox{ and }\ q'(z)\geqslant 0\ \ \mbox{ for } z\in[0,\gamma^*].
\]
Define $\tilde{q}(z)=q(2\gamma^*-z)$ for $z\in[\gamma^*,2\gamma^*]$, then
\[
 \tilde{q}''-c\tilde{q}'-d\tilde{q}+f(\tilde{q}(z-c\tau))=-2cq_\xi,\ \ \ \xi=2\gamma^*-z\in[0,\gamma^*].
\]
Let us set
\[
Q(z;\gamma^*)=Q(z)=\tilde{q}(z)-q(z)=q(\xi)-q(2\gamma^*-\xi).
\]
Then $Q\leqslant 0$ for $z\in[\gamma^*,2\gamma^*]$ and it satisfies
\begin{equation}\label{Qqq}
\left\{
 \begin{array}{ll}
 Q'' - cQ'-d Q=f(q(z-c\tau))-f(\tilde{q}(z-c\tau))-2cq_\xi\leqslant 0, & \gamma^*\leqslant z\leqslant 2\gamma^*,\\
Q(\gamma^*)=0,\ \ \  Q(2\gamma^*)=-q(2\gamma^*)<0.
 \end{array}
 \right.
\end{equation}
The strong maximal principle and the Hopf lemma imply that
\[
Q(z)<0,\ \ \ z\in(\gamma^*,2\gamma^*],\ \ \ Q'(\gamma^*)<0.
\]
It follows the continuity that for all small $\varepsilon\geqslant 0$,
\[
Q'(\gamma^*+\varepsilon;\gamma^*+\varepsilon)<0,\ \ \
Q(z;\gamma^*+\varepsilon)<0\ \ \mbox{ for } z\in(\gamma^*+\varepsilon,2\gamma^*+2\varepsilon],
\]
which implies that $q(2\gamma^*+2\varepsilon-\xi)>q(\xi)$ for $\xi\in[0,\gamma^*+\varepsilon)$. Moreover, since
$Q'(\gamma^*+\varepsilon;\gamma^*+\varepsilon)=-2q'(\gamma^*+\varepsilon)$, it then follows that $q'(\gamma^*+\varepsilon)>0$.
But these facts contradict the definition of $\gamma^*$. Thus the monotonicity of positive solutions of \eqref{semiwave} is
established.

Next, we consider the asymptotic behavior of positive solution $q$ of \eqref{semiwave}. The monotonicity of $q$ implies that
there is a constant $a>0$ such that $\lim_{z\to\infty} q(z)=a$. We claim that $a=u^*$. For any sequence
$\{z_n\}$ with $z_n\to\infty$ as $n\to\infty$, define $q_n(z)=q(z+z_n)$. Then $q_n$ solves the same equation as $q$ but over
$(-z_n,\infty)$. Since $q_n\leqslant u^*$, it follows that there is a subsequence of $\{q_n\}$ (still denoted by $\{q_n\}$)
such that $q_n\to \hat{q}$ locally in $C^2(\R)$  as $n\to\infty$, and $\hat{q}$ is a solution of
\[
 v''-cv'-d v+f(v(z-c\tau))=0,\ \ \ z\in\R.
\]

On the other hand, it follows from $\lim_{z\to\infty}q(z)=a$ that $ \hat{q}\equiv a$, which implies that $a=u^*$, as we wanted.
Thus this completes the proof of Step 2.

\smallskip

$ Step \ 3$. We show that problem \eqref{semiwave} has at most one positive solution.

Suppose problem \eqref{semiwave} has two positive solutions $q_1$ and $q_2$, then $0<q_i<u^*$ in $(0,\infty)$, and
$q_i(z)\to u^*$ as $z\to\infty$ for $i=1,\ 2$. Define
\[
\rho^*:=\inf\left\{\frac{q_1(z)}{q_2(z)}:z>0\right\}.
\]
From Step 2 we have $(q_i)_+'(0)>0$, $i=1, 2$. Then by L'H\^{o}pital's rule we obtain
$\lim_{z\downarrow 0}\frac{q_1(z)}{q_2(z)}>0$, which together with $\lim_{z\to+\infty}\frac{q_1(z)}{q_2(z)}=1$
implies that $\rho^*\in (0,1]$. Next we show $\rho^*=1$. Indeed, assume for the sake of contraction that $\rho^*\in (0,1)$.
Define
\[
w(z):=q_1(z)-\rho^*q_2(z).
\]
Then $w(z)\geqslant 0$ for $z\geqslant 0$,  $w(0)=0$, $w(+\infty)=(1-\rho^*)u^*>0$ and
\[
w''-cw'-dw=-f(q_1(z-c\tau))+\rho^*f(q_2(z-c\tau))\leqslant 0,
\]
where the sub-linearity and monotonicity of $f(z)$ for $z\in(0,u^*)$ are used. By Hopf's lemma,
we see that $0<w'(0)=(q_1)_+'(0)-\rho^* (q_2)_+'(0)$, which implies that $\lim_{z\downarrow 0}\frac{q_1(z)}{q_2(z)}>\rho^*$.
Thus, in view of the definition of $\rho^*$, we have an $z_0\in (0,+\infty)$ such that $w(z_0)=0$.
By the elliptic strong maximum principle, we infer that $w(z)\equiv 0$ for $z>0$, a contradiction
to $w(+\infty)>0$. Therefore, $\rho^*=1$, and hence, $q_1(z)\geqslant q_2(z)$. Changing the role of
$q_1$ and $q_2$ and repeating the above arguments, we obtain $q_2(z)\geqslant q_1(z)$. The uniqueness is proved.

\smallskip

$ Step \ 4$. Let us consider the monotonicity of positive solutions in $c$.

Assume that $q_c$ is a positive solution of \eqref{semiwave}. Choose $c_1<c$ and let $q_{c_1}$ be the maximal nonnegative solution
of \eqref{semiwave} with $c=c_1$. Since $u^*$ is a supersolution of \eqref{semiwave}, and by Step 2 we know that $q_c$ is a subsolution
of \eqref{semiwave} with $c=c_1$, in view of the uniqueness of positive solution of this problem, then we see that
$q_{c_1}(z)\geqslant q_c(z)$ for $z\in[0,\infty)$. It thus follows from the maximum principle and the Hopf lemma that
\[
q_{c_1}(z)>q_c(z)\ \ \mbox{ for } z\in(0,\infty), \ \ \mbox{ and }\ \ q'_{c_1}(0)>q'_c(0).
\]

The proof of this proposition is complete now.
\end{proof}

Next we give a necessary and sufficient condition for the existence of a positive solution of \eqref{semiwave}.  For this purpose, we need  the following property on the distribution of complex solutions to a transcendental equation.
\begin{lem}\label{lem:eigen}
Let $c>0$ and $\tau>0$. Define
\begin{equation}
\Delta_c(\lambda,\tau)=\lambda^2-c\lambda-d+f'(0)e^{-\lambda c\tau}.
\end{equation}
Then there exists $c_0(\tau)\in (0,2\sqrt{f'(0)-d})$ such that the following statements hold:
\begin{enumerate}
\item[(i)] $\Delta_c(\lambda,\tau)=0$ has a positive solution if and only if $c\geqslant c_0(\tau)$;
\item[(ii)] $\Delta_c(\lambda,\tau)=0$ has a complex solution in the domain
\begin{equation}\label{def-Omega}
\Omega:=\left\{\lambda\in \mathbb{C}: Re \lambda>0, Im \lambda \in \left(0,\frac{\pi}{c\tau}\right) \right\}
\end{equation}
provided that $c\in (0,c_0(\tau))$.
\end{enumerate}
\end{lem}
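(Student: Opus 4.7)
The plan is to first analyze the real-valued function $F(c,\lambda):=\lambda^2-c\lambda-d+f'(0)e^{-\lambda c\tau}$ on $\lambda\geqslant 0$ to pin down $c_0(\tau)$, and then track the complex roots of $\Delta_c(\lambda,\tau)=0$ via a continuity-in-$c$ argument as $c$ descends from $c_0(\tau)$ to $0$. For (i) I note that $F(c,0)=f'(0)-d>0$, $F(c,\lambda)\to+\infty$ as $\lambda\to+\infty$, and $\partial_c F(c,\lambda)=-\lambda\bigl(1+f'(0)\tau e^{-\lambda c\tau}\bigr)<0$ for $\lambda>0$, so $F(\cdot,\lambda)$ is strictly decreasing in $c$. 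At $c=0$, $F(0,\lambda)=\lambda^2+(f'(0)-d)>0$, so no positive real root exists, giving $c_0(\tau)>0$ by continuity. At $c_1:=2\sqrt{f'(0)-d}$, the substitution $\lambda_1:=\sqrt{f'(0)-d}$ produces $F(c_1,\lambda_1)=f'(0)\bigl[e^{-2(f'(0)-d)\tau}-1\bigr]<0$. Setting $c_0(\tau):=\inf\{c>0:\min_{\lambda\geqslant 0}F(c,\lambda)\leqslant 0\}$, the strict monotonicity in $c$ yields $c_0(\tau)\in(0,c_1)$ and the equivalence in (i); at $c=c_0(\tau)$ the minimizer $\lambda_0>0$ is a double root, i.e., $F(c_0,\lambda_0)=\partial_\lambda F(c_0,\lambda_0)=0$.

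For (ii), at $(c_0(\tau),\lambda_0)$ one computes $\partial_{\lambda\lambda}F(c_0,\lambda_0)=2+f'(0)(c_0\tau)^2 e^{-\lambda_0 c_0\tau}>0$ while $\partial_c F(c_0,\lambda_0)<0$, so the local expansion $\Delta_c(\lambda,\tau)\approx \partial_c F\cdot(c-c_0)+\tfrac12 \partial_{\lambda\lambda}F\cdot(\lambda-\lambda_0)^2$ shows that for $c<c_0(\tau)$ near $c_0(\tau)$ the two roots of $\Delta_c(\cdot,\tau)$ near $\lambda_0$ form a complex conjugate pair $\lambda_0\pm i\varepsilon(c)$ with $\varepsilon(c)\downarrow 0$ as $c\uparrow c_0(\tau)$, and the upper root lies in $\Omega$. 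I would then continue this root analytically in $c$ and set $c_*:=\inf\{c\in(0,c_0(\tau)):\lambda(c')\in\Omega~\text{for all}~c'\in(c,c_0(\tau))\}$. If $c_*>0$, then $\lambda(c_*)\in\partial\Omega$, and each piece of $\partial\Omega$ is excluded by direct substitution into $\Delta_{c_*}(\lambda,\tau)=0$: the positive real axis contradicts (i) since $c_*<c_0(\tau)$; on the imaginary segment $\lambda=ib$ with $b\in(0,\pi/(c_*\tau)]$, the imaginary part equation reads $c_*b+f'(0)\sin(bc_*\tau)=0$, which fails since $c_*b>0$ and $f'(0)\sin(bc_*\tau)\geqslant 0$ on $bc_*\tau\in(0,\pi]$; on the upper boundary $\lambda=a+i\pi/(c_*\tau)$ with $a>0$, using $\sin\pi=0$ and $\cos\pi=-1$, the imaginary part forces $a=c_*/2$ and then the real part reduces to $-c_*^2/4-\pi^2/(c_*\tau)^2-d-f'(0)e^{-c_*^2\tau/2}<0$, a contradiction. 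Hence $c_*=0$ and $\lambda(c)\in\Omega$ for all $c\in(0,c_0(\tau))$.

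The main obstacle is the global analytic continuation of $\lambda(c)$ from $c_0(\tau)$ down to $0$. Although simple roots extend by the implicit function theorem, one must exclude collisions with other roots that would break analyticity. I would handle this by coupling the continuation with an argument-principle count of zeros in the truncated domain $\Omega\cap\{|\lambda|<R\}$; since $|\Delta_c(\lambda)|\to\infty$ uniformly as $|\lambda|\to\infty$ in the right half plane (the $\lambda^2$ term dominates), large $R$ is harmless. The boundary non-vanishing established above keeps the zero count continuous in $c$, and since the count is at least one just below $c_0(\tau)$ and $\Omega_c$ only grows as $c$ decreases (the line $\mathrm{Im}\,\lambda=\pi/(c\tau)$ moves up), it remains $\geqslant 1$ throughout $(0,c_0(\tau))$, which is exactly the conclusion of (ii).
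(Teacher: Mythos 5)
Your proposal is correct, and it takes a genuinely different continuation route from the paper's. For part (ii), the paper fixes $c\in(0,c_0(\tau))$ and continues the complex root in the parameter $\tau$, starting from the explicit quadratic root $\tfrac{c}{2}+\tfrac{i}{2}\sqrt{4(f'(0)-d)-c^2}$ at $\tau=0$ (Step~1 via the implicit function theorem), then excludes escape through $\mathrm{Im}\,\lambda=0$ and $\mathrm{Im}\,\lambda=\pi/(c\tau)$ by direct substitution (Step~2) and through $\mathrm{Re}\,\lambda=0$ by a transversality computation $\alpha'(\tau^*)>0$ (Step~3). You instead continue in $c$ downward from $c_0(\tau)$, seeding the continuation with the double real root at $(c_0(\tau),\lambda_0)$ that splits into a conjugate pair because $\partial_\lambda F=0$, $\partial_{\lambda\lambda}F>0$, $\partial_c F<0$ there; you then seal all three pieces of $\partial\Omega$ by direct substitution, including the imaginary segment via $\mathrm{Im}\,\Delta_{c}(ib)=-cb-f'(0)\sin(bc\tau)<0$ for $bc\tau\in(0,\pi]$, which avoids the paper's transversality step entirely and is in that respect cleaner. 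What you flag as the main obstacle -- possible root collisions breaking analytic continuation -- is real, and your argument-principle fix works, though one has to be a little careful because both $\Delta_c$ and the domain $\Omega_c$ vary with $c$. A tidy way to write it: since any root $\lambda_n\in\Omega_{c_n}$ satisfies $|\lambda_n|^2\leqslant c_n|\lambda_n|+d+f'(0)$ (using $|e^{-\lambda_n c_n\tau}|\leqslant 1$ on $\mathrm{Re}\,\lambda_n>0$), all relevant roots lie in a fixed ball $B_R$; then the set $\{c\in(0,c_0(\tau)): \Delta_c \text{ has a zero in } \Omega_c\cap B_R\}$ is open in $(0,c_0(\tau))$ by Rouch\'e applied in a small ball about an interior zero, closed by the compactness in $\overline{B_R}$ together with your boundary non-vanishing, and nonempty near $c_0(\tau)$, hence all of $(0,c_0(\tau))$. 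Part (i) you verify more explicitly than the paper (which only sketches convexity and monotonicity in $c$), and the check $F(2\sqrt{f'(0)-d},\sqrt{f'(0)-d})=f'(0)[e^{-2(f'(0)-d)\tau}-1]<0$ pinning $c_0(\tau)<2\sqrt{f'(0)-d}$ is a nice addition.
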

Before the proof, we note that if $\tau=0$ then $\Delta_c(\lambda,\tau)=0$ reduces to a polynomial equation of order $2$. It admits at least one positive solution if and only if $c\geqslant 2\sqrt{f'(0)-d}$ and exactly a pair of complex eigenvalues in $\Omega$ when $c\in (0,2\sqrt{f'(0)-d})$.

\begin{proof}
(i) Note that  $\Delta_c(\lambda,\tau)$ is convex in $\lambda$, decreasing in $c>0$ when $\lambda>0$,  $\Delta_0(\lambda,\tau)>0$ and $\Delta_c(\lambda,\tau)=0$ is negative for some $\lambda>0$ when $c$ is sufficiently large. Therefore, such $c_0(\tau)$ exists.

(ii) We employ a continuation method with $\tau$ being the parameter. From the proof of \cite[Theorem 2.1]{RuanWei2003}, we can infer that the solutions of $\Delta_c(\lambda,\tau)=0$ is continuous in $\tau>0$. We write $\lambda=\alpha(\tau)+i\beta(\tau)$, where $\alpha(\tau)$ and $\beta(\tau)$ are continuous in $\tau>0$. Separating the real and imaginary parts of  $\Delta_c(\lambda,\tau)=0$ yields
\begin{equation}\label{s1}
\begin{cases}
F_1(\alpha,\beta,\tau):=\alpha^2-\beta^2-c\alpha-d+f'(0)e^{-c\tau \alpha}\cos c\tau\beta=0\\
F_2(\alpha,\beta,\tau):=2\alpha\beta-c\beta-f'(0)e^{-c\tau \alpha}\sin c\tau\beta=0.
\end{cases}
\end{equation}

We proceed with four steps.

\smallskip

$ Step \ 1$. If $\tau$ is small enough, then there is a solution in $\Omega$. Indeed,
At $\tau=0$, \eqref{s1} admits a solution $(\alpha,\beta)=\left(\frac{c}{2}, \frac{ \sqrt{|c^2-(f'(0)-d)^2}|}{2}\right)$. Note that
\begin{equation}
\det
\left(
\begin{matrix}
\partial_\alpha F_1 & \partial_\beta F_1\\
\partial_\alpha F_2 & \partial_\beta F_2
\end{matrix}\right)|_{\tau=0 }
=\det
\left(
\begin{matrix}
2\alpha-c &-2\beta\\
2\beta & 2\alpha+c
\end{matrix}\right)
>0.
\end{equation}
It then follows from the implicit function theorem that for small $\tau$, $\Delta_c(\lambda,\tau)$ admits a complex solution near $\frac{c}{2}+i\frac{ \sqrt{|c^2-(f'(0)-d)^2}|}{2}$, and hence, in the open domain $\Omega$.

\smallskip

$ Step \ 2$. For any $\tau>0$, $\Delta_c(\lambda,\tau)$ admits no solution with $\beta=0$ or $\beta=\frac{\pi}{c\tau}$ when $c\tau>0$. It follows from statement (i) that there is no solution with $\beta=0$ when $c<c_0(\tau)$. If $\beta$ equals $\frac{\pi}{c\tau}$, then from the second equation of \eqref{s1} we can infer that $\alpha=\frac{c}{2}$. Substituting $\alpha=\frac{c}{2}$ and $\beta=\frac{\pi}{c\tau}$ into the first equation of \eqref{s1}, we obtain $0=-\frac{1}{4}c^2-\left(\frac{\pi}{c\tau}\right)^2-d-f'(0)e^{-c^2\tau/2}$, a contradiction.

\smallskip

$ Step \ 3$. If a solution $\alpha(\tau)+i\beta(\tau)$ touches pure imaginary axis at some $\tau=\tau^*>0$, then $\alpha'(\tau^*)>0$. We use the implicit function theorem. By direct computations, we have
\begin{eqnarray*}
&&\det
\left(
\begin{matrix}
\partial_\alpha F_1 & \partial_\beta F_1\\
\partial_\alpha F_2 & \partial_\beta F_2
\end{matrix}\right)|_{\tau=\tau^*}\\
=&&\det
\left(
\begin{matrix}
-c-c\tau f'(0)\cos c\tau\beta &-2\beta-c\tau f'(0)\sin c\tau\beta\\
2\beta+c\tau f'(0)\sin c\tau\beta& -c-c\tau f'(0)\cos c\tau\beta
\end{matrix}\right)\\
=&& [-c-c\tau f'(0)\cos c\tau\beta]^2+ [2\beta+c\tau f'(0)\sin c\tau\beta]^2 \geqslant 0,
\end{eqnarray*}
where the equality holds if and only if $-c-c\tau f'(0)\cos c\tau\beta=0$ and $2\beta+c\tau f'(0)\sin c\tau\beta=0$. Taking these two relations into \eqref{s1} with $\alpha=0$, we obtain
\begin{equation}
\begin{cases}
-\beta^2-d-\frac{1}{\tau}=0\\
-c\beta+\frac{2\beta}{c\tau}=0,
\end{cases}
\end{equation}
which is not solvable for $\beta$. Therefore,
\[
\det \left(
\begin{matrix}
\partial_\alpha F_1 & \partial_\beta F_1\\
\partial_\alpha F_2 & \partial_\beta F_2
\end{matrix}\right)|_{\tau=\tau^*}>0.
\]
On the other hand,
\[
\left(
\begin{matrix}
\partial_\tau F_1\\
\partial_\tau F_2
\end{matrix}\right)|_{\tau=\tau^*}
=-c\beta f'(0)\left(
\begin{matrix}
\sin c\tau\beta \\
\cos c\tau\beta
\end{matrix}\right)
\]
Consequently, by the implicit function theorem we have
\[
\left(
\begin{matrix}
\alpha'(\tau^*)\\
\beta'(\tau^*)
\end{matrix}\right)|_{\tau=\tau^*}
=-\left(
\begin{matrix}
\partial_\alpha F_1 & \partial_\beta F_1\\
\partial_\alpha F_2 & \partial_\beta F_2
\end{matrix}\right)^{-1}|_{\tau=\tau^*}\left(
\begin{matrix}
\partial_\tau F_1\\
\partial_\tau F_2
\end{matrix}\right)|_{\tau=\tau^*},
\]
from which we compute to have
\begin{equation}
\alpha'(\tau^*)=\frac{(2\beta^4+2d\beta^2+c^2)c}{\det \left(
\begin{matrix}
\partial_\alpha F_1 & \partial_\beta F_1\\
\partial_\alpha F_2 & \partial_\beta F_2
\end{matrix}\right)|_{\tau=\tau^*}}>0.
\end{equation}

\smallskip

$ Step \ 4$. Completion of the proof. In Steps 2 and 3, we have verified that the perturbed solution at Step 1 can not escape $\Omega$ continuously as $\tau$ increases from $0$ to $\infty$. Therefore, it always stays in $\Omega$.
\end{proof}

Based on the above results, we are ready to give the following necessary and sufficient condition for
\eqref{semiwave} to have a unique positive solution.

\begin{prop}\label{prop:qoan1}
Suppose {\bf{(H)}} holds.  Problem \eqref{semiwave} has a unique positive solution $q\in C^2([0,\infty))$ if and only if $c\in[0,c_0(\tau))$,
where $c_0(\tau)$ is given in Lemma \ref{lem:eigen}.
\end{prop}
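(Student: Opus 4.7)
The proof splits into necessity ($c < c_0(\tau)$) and sufficiency (existence of a positive solution for each such $c$). Proposition~\ref{prop:semiwave} already supplies the maximal nonnegative solution $\overline q = \lim_{l \to \infty} w^l$, the dichotomy $\overline q \equiv 0$ or $\overline q > 0$, uniqueness when positive, monotonicity, and the Hopf bound $q'_+(0) > 0$; what remains is to detect which side of the dichotomy occurs for each $c$.

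For necessity, I will adapt a Laplace-transform argument from linear KPP theory. Assuming that $q > 0$ solves \eqref{semiwave}, I multiply by $e^{-\lambda z}$ for $\lambda > 0$ and integrate over $(0, \infty)$. Using $q \equiv 0$ on $(-\infty, 0]$, the asymptotics $q \to u^*$ with $q' \to 0$ at infinity (Proposition~\ref{prop:semiwave}), and the KPP bound $f(s) \leqslant f'(0) s$ from \textbf{(H)}, the integration yields
\[
\Delta_c(\lambda,\tau)\,\hat q(\lambda) \ \geqslant\ q'_+(0) \ > \ 0 \qquad \text{for every } \lambda > 0,
\]
where $\hat q(\lambda) := \int_0^\infty q(z) e^{-\lambda z}\,dz > 0$. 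If $c \geqslant c_0(\tau)$, Lemma~\ref{lem:eigen}(i) supplies a positive real root $\lambda_*$ of $\Delta_c(\cdot, \tau)$; evaluating at $\lambda_*$ produces the contradiction $0 \geqslant q'_+(0) > 0$. Hence necessarily $c < c_0(\tau)$.

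For sufficiency, I fix $c \in [0, c_0(\tau))$ and aim to exhibit a nontrivial nonnegative subsolution $\underline q$ of \eqref{semiwave} with $\underline q \leqslant u^*$; comparing $\underline q$ with $w^l$ on $[0,l]$ (the boundary data match since $\underline q(0) = 0 = w^l(0)$ and $\underline q(l) = 0 \leqslant u^* = w^l(l)$ for $l$ large) and passing $l \to \infty$ will give $\overline q \geqslant \underline q \not\equiv 0$, forcing $\overline q > 0$ by the dichotomy. By Lemma~\ref{lem:eigen}(ii), $\Delta_c(\lambda,\tau) = 0$ admits a root $\alpha + i\beta \in \Omega$, so $g(z) := e^{\alpha z}\sin(\beta z)$ solves the linearized equation on $\R$. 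However, the natural candidate $\epsilon g$ on $[0,\pi/\beta]$ (zero elsewhere) is obstructed because the KPP inequality $f(s) \leqslant f'(0) s$ runs the wrong way on the region where $g(z - c\tau) > 0$. My remedy is a small perturbation: for $\gamma > 0$ small, by continuous dependence on coefficients the perturbed equation $\lambda^2 - c\lambda - d + f'(0) e^{-\gamma} e^{-\lambda c\tau} = 0$ still admits a complex root $\alpha' + i\beta'$ with $\alpha' > 0$ and $\beta' c\tau \in (0,\pi)$. Taking $\underline q(z) = \epsilon e^{\alpha' z}\sin(\beta' z)$ on $[0,\pi/\beta']$ and $\underline q \equiv 0$ otherwise, with $\epsilon > 0$ chosen so small that $\underline q \leqslant u^*$ and $f(s)/s \geqslant f'(0) e^{-\gamma}$ on $[0, \sup \underline q]$ (possible since $f(s)/s \to f'(0)$ as $s \to 0^+$), I verify the subsolution inequality on $(0,\pi/\beta')$ case by case: for $z \in (0, c\tau]$, $\underline q(z-c\tau) = 0$ and the modified linear contribution is positive because $g(z-c\tau) < 0$; for $z \in (c\tau, \pi/\beta')$, the choice of $\epsilon$ absorbs the KPP defect. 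The upward jump of $\underline q'$ at $\pi/\beta'$ only helps the subsolution condition in the weak sense.

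The principal obstacle is precisely this subsolution construction. The complex eigenmode of $\Delta_c$ cannot itself serve as a subsolution because the KPP sublinearity forces the wrong sign on part of its support, and the $e^{-\gamma}$-perturbation workaround relies on the fact that the complex-root structure of Lemma~\ref{lem:eigen}(ii) is stable under small perturbations of $f'(0)$ — itself a consequence of the implicit-function-theorem argument used to establish that lemma. The remaining ingredients (regularity, the comparison principle for the delayed elliptic problem via monotonicity of $f$ on $[0, u^*]$, and the passage $l \to \infty$) are routine given Proposition~\ref{prop:semiwave}.
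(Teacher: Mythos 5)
Your proof is correct, and the necessity half takes a genuinely different route from the paper. For nonexistence when $c\geqslant c_0(\tau)$, the paper uses a sliding argument: it forms $w(z)=le^{\lambda_1 z}-q(z)$ (where $\lambda_1>0$ is a real root of $\Delta_c(\cdot,\tau)$), chooses $l$ so that $w\geqslant 0$ with an interior zero, notes that the KPP bound $f(s)\leqslant f'(0)s$ gives $w''-cw'-dw\leqslant -f'(0)w(z-c\tau)\leqslant 0$, and invokes the strong maximum principle to force $w\equiv 0$, contradicting $w\to\infty$. Your Laplace-transform argument — integrate the equation against $e^{-\lambda z}$, use $q\equiv 0$ on $(-\infty,0]$, $q\to u^*$, $q'\to 0$ and $f\leqslant f'(0)\cdot$ to land at $\Delta_c(\lambda,\tau)\,\hat q(\lambda)\geqslant q_+'(0)>0$, then evaluate at the positive root — is a legitimate alternative with essentially the same inputs (boundedness, $q'\to 0$, the real root from Lemma \ref{lem:eigen}(i)). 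The sliding argument is slightly more self-contained (no boundary terms to control), while the transform argument is arguably cleaner to read; either is fine. Just note that $q'\to 0$ is needed to kill the boundary terms at $\infty$; it follows from $\int_0^\infty q'<\infty$ together with the $C^2$-bounds coming from the ODE, though it is not stated as such in Proposition \ref{prop:semiwave} and deserves a sentence.

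For sufficiency, your construction is essentially the one in the paper, differing only in choosing the branch of the eigenmode at $\sin$ on $(0,\pi/\beta')$ rather than $\cos$ on $(3\pi/(2\beta),5\pi/(2\beta))$, and in writing the perturbation as $f'(0)e^{-\gamma}$ rather than $(1-\gamma)f'(0)$; both are phase/scale variants of the same idea. Two small points worth tightening: (i) Lemma \ref{lem:eigen}(ii) is stated for $c\in(0,c_0(\tau))$, so the case $c\tau=0$ needs the separate (trivial, phase-plane) treatment noted in the paper; (ii) the persistence of the complex root in $\Omega$ under the $\gamma$-perturbation is by the implicit function theorem, which requires the nondegeneracy $\Delta_c'(\lambda)\neq 0$; this is implicit in the Step 3 Jacobian computation of Lemma \ref{lem:eigen}, and you should cite it rather than appeal loosely to continuity.
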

\begin{proof}
Firstly, let us show that problem \eqref{semiwave} admits a unique positive solution when $c\in[0,c_0(\tau))$.  We employ the super- and subsolution method.
The case where $c\tau=0$ is trivial and the proof is omitted. Fix $c\in (0,c_0(\tau))$. By Lemma \ref{lem:eigen} we can infer that there exists
$\gamma>0$ such that
\begin{equation}
\widetilde{\Delta}_c(\lambda)=\lambda^2-c\lambda-d+(1-\gamma) f'(0)e^{-\lambda c\tau}=0
\end{equation}
has a solution $\lambda=\alpha+i\beta$ in $\Omega$.

{\bf Claim.} The function
\begin{equation}
\underline{v}(x):=
\begin{cases}
\delta e^{\alpha x} cos \beta x, & \beta x\in (\frac{3\pi}{2}, \frac{5\pi}{2}),\\
0, & \text{elsewhere},
\end{cases}
\end{equation}
is a subsolution provided that $\delta$ is small enough.

Indeed, for $\beta x\in (\frac{3\pi}{2}, \frac{5\pi}{2})$, we have
\begin{eqnarray*}
L[\underline{v}](x):=&&\underline{v}''(x)-c\underline{v}'(x)-d\underline{v}(x)+f(\underline{v}(x-c\tau))\\
=&& \underline{v}(x) \left[ \alpha^2-\beta^2-c\alpha -d- [2\alpha\beta -c\beta]\tan\beta x \right] +f(\underline{v}(x-c\tau))\\
=&&  -\underline{v}(x) \frac{1}{\cos \beta x}(1-\gamma)f'(0)e^{-c\tau\alpha}\cos(\beta(x-c\tau))+f(\underline{v}(x-c\tau))\\
=&&  -(1-\gamma) f'(0)\delta e^{\alpha(x-c\tau)}\cos\beta(x-c\tau)+f(\underline{v}(x-c\tau)).
\end{eqnarray*}
Choose $\delta>0$ sufficiently small such that
\[
f(\underline{v}(x-c\tau)) \geqslant (1-\gamma) f'(0) \underline{v}(x-c\tau),
\]
with which we obtain
\[
L[\underline{v}](x)\geqslant (1-\gamma)f'(0) [\underline{v}(x-c\tau)-\delta e^{\alpha(x-c\tau)}\cos\beta(x-c\tau)],\quad \beta x\in \left(\frac{3\pi}{2}, \frac{5\pi}{2}\right).
\]
Clearly, if $\beta (x-c\tau) \in \left(\frac{3\pi}{2}, \frac{5\pi}{2}\right)$, then $\underline{v}(x-c\tau)=\delta e^{\alpha(x-c\tau)}\cos\beta(x-c\tau)$, and hence, $L[\underline{v}](x)\geqslant 0$. If $\beta (x-c\tau) \not\in \left(\frac{3\pi}{2}, \frac{5\pi}{2}\right)$, then $\underline{v}(x-c\tau)=0$, and hence,
\[
L[\underline{v}](x)\geqslant -(1-\gamma)f'(0)\delta e^{\alpha(x-c\tau)}\cos\beta(x-c\tau)
\]
with $\beta (x-c\tau)\in \left(\frac{3\pi}{2}-\beta c\tau, \frac{5\pi}{2}-\beta c\tau\right)\setminus \left(\frac{3\pi}{2}, \frac{5\pi}{2}\right)$. Since $\beta c\tau\leqslant \pi$ (as proved in Lemma \ref{lem:eigen}), we obtain $\cos\beta(x-c\tau)\leqslant 0$ when $\beta (x-c\tau)\in \left(\frac{3\pi}{2}-\beta c\tau, \frac{5\pi}{2}-\beta c\tau\right)\setminus \left(\frac{3\pi}{2}, \frac{5\pi}{2}\right)$. To summarize, $L[\underline{v}](x)\geqslant 0$ for $\beta x\in \left(\frac{3\pi}{2}, \frac{5\pi}{2}\right)$ and $L[\underline{v}](x)= 0$ for $\beta x\not \in \left[\frac{3\pi}{2}, \frac{5\pi}{2}\right]$. The claim is proved.

Having such a subsolution, we can infer that \eqref{semiwave} admits a positive solution. The proof of uniqueness of the solution of \eqref{semiwave} follows from Proposition \ref{prop:semiwave}.

\smallskip

Next we show that \eqref{semiwave} does not admit a positive solution when $c\geqslant c_0(\tau)$. We employ a sliding argument. Assume for the sake of contradiction that there is a solution $q(z)$. Since $c\geqslant c_0(\tau)$, $\Delta_c(\lambda,\tau)=0$ admits a positive solution $\lambda_1$. Define $w(z)=le^{\lambda_1 z}-q(z), l>0$. Since $q(0)=0$ and $q(+\infty)=u^*$, we may choose $l$ such that $w(z)\geqslant 0$ for $z\geqslant 0$ and $w(z)$ vanishes at some $z\in (0,+\infty)$. Note that $f(u)\leqslant f'(0) u$. It then follows that
\begin{equation}
w''(z)-cw'(z)-dw(z)=-f'(0)w(z-c\tau)+[f(q(z-c\tau))-f'(0)q(z-c\tau)]\leqslant 0, \quad z \geqslant 0.
\end{equation}
By the elliptic strong maximum principle, we obtain $w(z)=0$ for $z\geqslant 0$, a contradiction. The nonexistence is proved.
\end{proof}

Based on the above results, we obtain the solvability of  \eqref{sw11}.

\begin{thm}\label{waves}
For any given $\tau>0$, let $c_0(\tau)$ be given in Lemma \ref{lem:eigen}. For each $\mu>0$, there exists a unique $c^*=c^*_\mu(\tau)\in (0, c_0(\tau))$
such that $(q_{c^*})'_+(0)=\frac{c^*}{\mu}$, where $q_{c^*}(z)$ is the unique positive solution of \eqref{semiwave} with $c$ replaced by $c^*$. Moreover, $c^*_\mu(\tau)$
is increasing in $\mu$ with
\[
\lim_{\mu\to\infty}c^*_\mu(\tau)=c_0(\tau).
\]
\end{thm}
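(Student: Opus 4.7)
\textbf{Proof proposal for Theorem \ref{waves}.}
The plan is to reduce the existence of $c^*$ to a root-finding problem for the single-variable function
\[
G(c) := \mu\, (q_c)'_+(0) - c, \qquad c\in[0,c_0(\tau)),
\]
where $q_c$ is the unique positive solution of \eqref{semiwave} guaranteed by Propositions \ref{prop:semiwave} and \ref{prop:qoan1}. I would first show $G$ is continuous and strictly decreasing on $[0,c_0(\tau))$. Strict monotonicity is immediate from Proposition \ref{prop:semiwave}, which gives $(q_{c_1})'_+(0) > (q_c)'_+(0)$ whenever $c_1<c$; combined with the $-c$ term this yields strict decrease of $G$. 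Continuity in $c$ follows from standard elliptic interior estimates applied to the family $\{q_c\}$ (which is uniformly bounded by $u^*$), together with uniqueness of the positive solution, so that any convergent subsequence must converge to $q_{c_\infty}$.

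Next I would analyze the sign of $G$ at the endpoints. Since $q_0$ is a positive solution, the Hopf lemma gives $(q_0)'_+(0)>0$, hence $G(0)>0$. The crucial step is to prove $\lim_{c\uparrow c_0(\tau)} G(c) \leqslant -c_0(\tau)<0$. By the monotonicity of $c\mapsto q_c$ (Proposition \ref{prop:semiwave}) and the uniform bound by $u^*$, $q_c$ decreases pointwise to some nonnegative limit $q^*$ as $c\uparrow c_0(\tau)$; elliptic $C^2_{\rm loc}$ estimates upgrade this to $C^2_{\rm loc}$ convergence, so $q^*$ solves \eqref{semiwave} with $c=c_0(\tau)$. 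By Proposition \ref{prop:qoan1} there is no positive solution at $c_0(\tau)$, and Proposition \ref{prop:semiwave} tells us any nonnegative solution is either identically $0$ or strictly positive, forcing $q^*\equiv 0$. The $C^1_{\rm loc}$ convergence then yields $(q_c)'_+(0)\to 0$, so $G(c)\to -c_0(\tau)<0$. Existence and uniqueness of $c^*\in(0,c_0(\tau))$ follow by the intermediate value theorem and strict monotonicity of $G$.

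For the $\mu$-monotonicity, assume $\mu_1<\mu_2$ and suppose for contradiction that $c^*_{\mu_1}\geqslant c^*_{\mu_2}$. Then Proposition \ref{prop:semiwave} gives $(q_{c^*_{\mu_1}})'_+(0)\leqslant (q_{c^*_{\mu_2}})'_+(0)$, so
\[
c^*_{\mu_1} \;=\; \mu_1 (q_{c^*_{\mu_1}})'_+(0) \;\leqslant\; \mu_1 (q_{c^*_{\mu_2}})'_+(0) \;<\; \mu_2 (q_{c^*_{\mu_2}})'_+(0) \;=\; c^*_{\mu_2},
\]
a contradiction. Since $c^*_\mu$ is increasing and bounded by $c_0(\tau)$, it has a limit $c^{**}\leqslant c_0(\tau)$ as $\mu\to\infty$. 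The defining identity gives $(q_{c^*_\mu})'_+(0) = c^*_\mu/\mu \to 0$. If $c^{**}<c_0(\tau)$, then by the $C^1_{\rm loc}$ continuity argument above together with the Hopf lemma applied to $q_{c^{**}}$, one would have $(q_{c^*_\mu})'_+(0)\to (q_{c^{**}})'_+(0)>0$, a contradiction. Hence $c^{**}=c_0(\tau)$.

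The main obstacle I expect is the endpoint analysis at $c=c_0(\tau)$: identifying the limit $q^*$ and ruling out that it is a nontrivial positive solution. Both Propositions \ref{prop:semiwave} and \ref{prop:qoan1} are needed here, and one must also handle the nonlocal term $q(z-c\tau)$ carefully under the parameter variation, but the monotonicity of the family $\{q_c\}$ and the uniform $L^\infty$ bound make the limit passage tractable.
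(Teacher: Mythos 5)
Your proposal is correct and takes essentially the same approach as the paper: you define $G(c)=\mu(q_c)'_+(0)-c$ (the paper works with $\eta(c;\tau)=P(0;c,\tau)-c/\mu$, which is just $G/\mu$), show strict monotonicity from Proposition \ref{prop:semiwave}, establish the endpoint limits via Hopf and the nonexistence at $c_0(\tau)$ from Proposition \ref{prop:qoan1}, and conclude by the intermediate value theorem; the $\mu$-monotonicity and $\mu\to\infty$ limit are handled by the same pivot on the defining identity $(q_{c^*_\mu})'_+(0)=c^*_\mu/\mu$, and your algebraic contradiction for monotonicity in $\mu$ is a sound substitute for the paper's geometric intersection-point argument.
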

\begin{proof}
From Propositions \ref{prop:semiwave} and \ref{prop:qoan1}, it is known that for each $c\in [0,c_0(\tau))$, problem \eqref{semiwave} admits
a unique solution $q_c(z)>0$ for $z>0$, and for any $0\leqslant c_1<c_2\leqslant c_0(\tau)$, $q_{c_1}(z)>q_{c_2}(z)$ in $(0,\infty)$.
Define
\begin{equation}\label{def-P}
P(0;c,\tau):=(q_c)_+'(0).
\end{equation}
Then $P(0;c,\tau)>0$ for all $c\in[0,c_0(\tau))$ and it decreases continuously  in
$c\in [0, c_0(\tau))$. Let $c_n\uparrow c_0(\tau)$. For each $c_n$ problem \eqref{semiwave} admits a unique solution $q_{c_n}(z)$. Clearly, $q_{c_n}$ converges to some $q^*$ and $(q_{c_n})'$ converges to $(q^*)'$ locally uniformly in $z\in[0,+\infty)$, and $q^*$ solves \eqref{semiwave} with $c=c_0(\tau)$. By the nonexistence established in Proposition \ref{prop:qoan1} we obtain $q^*\equiv 0$. In particular,
\begin{equation}
\lim_{c\uparrow c_0(\tau)} (q_c)_+'(0)=(q^*)_+'(0)=0.
\end{equation}
We now consider the continuous function
\[
\eta(c;\tau)=\eta_\mu(c;\tau):=P(0;c,\tau)-\frac{c}{\mu}\ \ \mbox{ for }\ c\in [0,c_0(\tau)).
\]
By the above discussion we know that $\eta(c;\tau)$ is strictly decreasing in $c\in[0, c_0(\tau))$. Moreover, $\eta(0;\tau)=P(0;0,\tau)>0$ and
$\lim_{c\uparrow c_0(\tau)}\eta(c;\tau)=-c_0(\tau)/\mu<0$. Thus there exists a unique $c^*=c^*_\mu(\tau)\in (0, c_0(\tau))$ such that $\eta(c^*;\tau)=0$,
which means that
\[
(q_{c^*})_+'(0)=\frac{c^*}{\mu}.
\]

Next, let us view $(c^*_\mu, c_\mu^*/\mu)$ as the unique intersection point of the decreasing curve $y=P(0;c,\tau)$ with the increasing
line $y=c/\mu$ in the $cy$-plane, then it is clear that $c^*_\mu(\tau)$ increases to $c_0(\tau)$ as $\mu$ increases to $\infty$.
The proof is complete.
\end{proof}
\begin{remark}\label{tau0}\rm
In \cite{DuLou}, the authors considered the case $\tau=0$. They obtained that for each $\mu>0$, there is a unique
$c^*=c^*_\mu(0)\in (0, c_0(0))$ such that $(q_{c^*})'_+(0)=\frac{c^*}{\mu}$, where $q_{c^*}(z)$ is the unique of \eqref{semiwave}
with $\tau=0$ and $c=c^*$, and $c_0(0)=2\sqrt{f'(0)-d}$. Moreover, $c^*_\mu(0)$
is increasing in $\mu$ with
\[
\lim_{\mu\to\infty}c^*_\mu(0)=c_0(0).
\]
 \end{remark}

In the rest of this part, we study the monotonicity of $c^*_\mu(\tau)$ in $\tau$. For any given $\tau\geqslant 0$,
the unique positive solution of \eqref{semiwave} with $c\in[0,c_0(\tau))$ may be denoted by $q_c(z;\tau)$. Now we
give the proof of Theorem \ref{thm:semiwave}.

\smallskip

\noindent
 {\bf Proof of Theorem \ref{thm:semiwave}:}
For $\tau\geqslant 0$ and $\mu>0$, let $c^*_\mu(\tau)$ be given in Theorem \ref{waves} and Remark \ref{tau0} for $\tau>0$ and $\tau=0$,
respectively. By Propositions \ref{prop:semiwave} and \ref{prop:qoan1}, we see that for $\tau\geqslant 0$ and $c\in (0,c_0(\tau))$,
problem \eqref{semiwave} admits a unique positive solution $q_c(z;\tau)$. Moreover, $q_c(z;\tau)$ is increasing in $z>0$ and
decreasing in $c\in (0,c_0(\tau))$. Let $P(0;c,\tau)$ be defined as in \eqref{def-P}.

{\bf Claim.}  For $0\leqslant \tau_1<\tau_2$ , $P(0;c,\tau_1)>P(0;c,\tau_2)$ when $c\in (0,c_0(\tau_2))$.

We postpone the proof of the claim and reach the conclusion in a few lines. Note that $c^*_\mu(\tau)$ is the unique positive solution of
$P(0;c,\tau)-\frac{c}{\mu}=0$. In view of $\lim_{c\uparrow c_0(\tau_2)} P(0;c,\tau_2)=0$, we have $c^*_\mu(\tau_2)\in (0,c_0(\tau_2))$.
If $c^*_\mu(\tau_1)\geqslant c_0(\tau_2)$, then we are done. Otherwise, $c^*_\mu(\tau_1)\in (0,c_0(\tau_2))$, which, together with the claim,
implies that
\[
\frac{c^*_\mu(\tau_1)}{\mu}=P(0;c^*_\mu(\tau_1),\tau_1)>P(0;c^*_\mu(\tau_1),\tau_2).
\]
This further implies that $c^*_\mu(\tau_1)>c^*_\mu(\tau_2)$, due to the monotonicity of $P(0;c,\tau_2)-\frac{c}{\mu}$ in $c\in (0,c_0(\tau_2))$.
Thus, $c^*_\mu(\tau)$ is decreasing in $\tau\geqslant 0$.

{\it Proof of the claim.} Since $c_0(\tau)$ is decreasing in $\tau\geqslant 0$, we see that $P(0;c,\tau_1)$ is well-defined when $c\in (0,c_0(\tau_2))$.
By the monotonicity of $q_c(z;\tau_2)$ in $z>0$, we have $q_c(z-c\tau_2;\tau_2)<q_c(z-c\tau_1;\tau_2)$. This, together with the monotonicity of $f(v)$ in $v$,
implies that $f(q_c(z-c\tau_2;\tau_2))< f(q_c(z-c\tau_1;\tau_2))$. Consequently,
\[
q_c''(z;\tau_2)-cq_c'(z;\tau_2)-dq_c(z;\tau_2)+f(q_c(z-c\tau_1;\tau_2))> 0,\quad z>0.
\]
Consider the initial value problem
\begin{equation}
\begin{cases}
v_t=v_{zz}-cv_z-dv+f(v(t,z-c\tau_1)),& t>0,\ z>0\\
v(t,z)=0, &t>0,\ z\leqslant 0\\
v(0,z)=q_c(z;\tau_2)
\end{cases}
\end{equation}
By the maximum principle we know that $v(t,z)$ is nondecreasing in $t\geqslant 0$ and its limit $v^*(z)$ as $t\to\infty$ satisfies \eqref{semiwave} with
$\tau=\tau_1$. By the uniqueness established in Proposition \ref{prop:semiwave}, we obtain $v^*(z)=q_c(z;\tau_1)$. Therefore,
\begin{equation}
q_c(z;\tau_2)=v(0,z)\leqslant v(t,z)\leqslant v(+\infty,z)=v^*(z)=q_c(z;\tau_1).
\end{equation}
The claim is proved. {\hfill $\Box$}

\section{Long time behavior of the solutions}\label{seclo}
In this section we study the asymptotic behavior of solutions of \eqref{p}. Firstly,  we give some sufficient
conditions for vanishing and spreading. Next, based on these results, we prove the spreading-vanishing dichotomy result
of \eqref{p}. Let us start this section with the following equivalent conditions for vanishing.

\begin{lem}\label{lemvansmall}
Assume that {\bf(H)} holds. Let $(u,g,h)$ be a solution of \eqref{p}. Then the following three assertions are equivalent:
$$
{\rm (i)}\  h_\infty \mbox{ or } g_\infty \mbox{ is finite};\qquad
{\rm (ii)}\  h_\infty-g_\infty\leqslant  \pi/\sqrt{f'(0)- d }; \qquad  {\rm (iii)}\ \lim_{t\to\infty}\|u(t,\cdot)\|_{L^\infty ([g(t),h(t)])}= 0.
$$
\end{lem}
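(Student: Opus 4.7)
I would establish the three equivalences via the cycle $(ii)\Leftrightarrow(iii)$ (by sub/super-solution arguments on fixed intervals slightly inside or outside the eventual habitat) together with $(i)\Leftrightarrow(ii)$, whose forward direction is immediate and whose reverse direction uses an $\omega$-limit argument at the moving boundary.

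For $(ii)\Rightarrow(iii)$, I would first handle the strictly subcritical case $h_\infty-g_\infty<\pi/\sqrt{f'(0)-d}$ by choosing $\epsilon>0$ small enough that the principal Dirichlet eigenvalue $\lambda_1=d+\pi^2/(h_\infty-g_\infty+2\epsilon)^2$ of $-\partial_{xx}+dI$ on $(g_\infty-\epsilon,\,h_\infty+\epsilon)$ still exceeds $f'(0)$, letting $\psi>0$ be the associated eigenfunction, picking $\beta>0$ small with $\lambda_1-\beta\geqslant f'(0)e^{\beta\tau}$, and taking $M$ large so that $\overline u(t,x):=Me^{-\beta t}\psi(x)$ dominates the initial data on $[-\tau,0]$. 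A direct computation using $f(s)\leqslant f'(0)s$ shows that $\overline u$ is a supersolution of \eqref{p} on the fixed enveloping interval (which contains $[g(t),h(t)]$ for all $t\geqslant -\tau$ thanks to \eqref{CC}), so Lemma \ref{lem:comp1} yields $u\leqslant\overline u\to 0$ uniformly. The borderline case $h_\infty-g_\infty=\pi/\sqrt{f'(0)-d}$ I would treat via an $\omega$-limit argument on the fixed interval $[g_\infty,h_\infty]$: any $\omega$-limit point is a nonnegative steady state $\phi$ of $-\phi''+d\phi=f(\phi)$ with zero Dirichlet data, and testing against the positive principal eigenfunction of $-\partial_{xx}+(d-f'(0))I$ (whose eigenvalue equals $0$ at the critical length), combined with the strict sub-linearity $f(s)<f'(0)s$ enforced by {\bf(H)}, forces $\phi\equiv 0$.

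For $(iii)\Rightarrow(ii)$ I argue by contraposition: if $h_\infty-g_\infty>\pi/\sqrt{f'(0)-d}$, select a closed sub-interval $[a,b]\subset(g_\infty,h_\infty)$ of length $L^{*}>\pi/\sqrt{f'(0)-d}$ and $t_0$ large enough that $[a,b]\subset[g(t),h(t)]$ for all $t\geqslant t_0-\tau$. With $\varphi>0$ the principal eigenfunction of $-\partial_{xx}+dI$ on $[a,b]$ with eigenvalue $\lambda=d+\pi^2/(L^{*})^2<f'(0)$, the decreasing-ratio condition on $f$ in {\bf(H)} gives $f(s)\geqslant\lambda s$ on $(0,\delta]$ for some $\delta>0$, so the stationary function $\underline u(x):=\epsilon\varphi(x)$ is a subsolution provided $\epsilon\|\varphi\|_\infty\leqslant\delta$. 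Shrinking $\epsilon$ further (using the strong maximum principle to ensure $u>0$ on the compact set $[t_0-\tau,t_0]\times[a,b]$) one secures $u(t,x)\geqslant\epsilon\varphi(x)$ throughout the history window, and Lemma \ref{lem:comp1} propagates this to all $t\geqslant t_0$, contradicting $u(t,\cdot)\to 0$.

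The implication $(ii)\Rightarrow(i)$ is trivial; the crux is $(i)\Rightarrow(ii)$, for which I assume without loss of generality $h_\infty<\infty$. Monotonicity of $h$ and $h(t)\to h_\infty$ yield $h'\in L^1(0,\infty)$, which is upgraded to $h'(t)\to 0$ (hence $u_x(t,h(t))\to 0$) through $L^p$/Schauder estimates applied to the fixed-boundary form of the equation from the proof of Theorem \ref{thm:local}, using the global bound $u\leqslant u^{*}$. For any $t_n\to\infty$, a subsequence of $u_n(s,x):=u(t_n+s,x)$ converges in $C^{1,2}_{\mathrm{loc}}$ to a limit $u^{\star}$ defined on $\R\times I_\infty$, where $I_\infty=[g_\infty,h_\infty]$ when $g_\infty>-\infty$ and $I_\infty=(-\infty,h_\infty]$ otherwise, and $u^{\star}$ satisfies the delayed PDE together with the double boundary conditions $u^{\star}(\cdot,h_\infty)=u^{\star}_x(\cdot,h_\infty)=0$. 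Since $f(u^{\star}(\cdot-\tau,\cdot))\geqslant 0$ is a nonnegative source for the linear parabolic operator $\partial_t-\partial_{xx}+dI$, the classical strong maximum principle and the Hopf boundary-point lemma at $x=h_\infty$ force $u^{\star}\equiv 0$. Thus $u(t,\cdot)\to 0$ locally uniformly on $(-\infty,h_\infty)$; if $g_\infty=-\infty$, the subsolution construction from the previous step on a supercritical interval near $h_\infty$ would force $u\geqslant\epsilon\varphi>0$ on a compact set, a contradiction. Hence $g_\infty>-\infty$, (iii) holds, and $(iii)\Rightarrow(ii)$ closes the loop. The most delicate technical point is the upgrade from $h'\in L^1$ to $h'(t)\to 0$, which requires transferring parabolic interior regularity across the moving free boundary via the change of variables from Section 2; the delayed Hopf step itself is clean because the delay enters only through a nonnegative source for the linear operator and hence does not disturb the classical parabolic maximum principle.
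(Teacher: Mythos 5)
Your proposal takes a genuinely different path on the hard implication $(i)\Rightarrow(ii)$. The paper's proof does not pass through $(iii)$: it negates $(ii)$, fixes a time $t_1$ with $h(t_1)-g(t_1)>\pi/\sqrt{f'(0)-d}$, and compares $u$ with the solution $(v,\xi)$ of an auxiliary problem whose \emph{left} boundary is frozen at $g(t_1)$ while only the right boundary $\xi(t)$ is free; since $\xi(t)\leqslant h(t)\leqslant h_\infty<\infty$ the auxiliary habitat is bounded regardless of whether $g_\infty=-\infty$, and a cited convergence result (\cite[Lemma 3.3]{DGP}) shows $v$ tends to the positive steady state $V$ whose Hopf derivative at $\xi(\infty)$ gives $\xi'(\infty)>0$, contradicting $\xi(\infty)<\infty$. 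You instead argue $(i)\Rightarrow(iii)$ directly by extracting an ancient limit $u^\star$ of time-translates of $u$ on $\R\times I_\infty$, showing $u^\star_x(\cdot,h_\infty)=0$ from $h'(t)\to 0$, and invoking the parabolic strong maximum principle plus Hopf at the now-fixed boundary $x=h_\infty$; you then rule out $g_\infty=-\infty$ via the subsolution from $(iii)\Rightarrow(ii)$ and close the loop with $(iii)\Rightarrow(ii)$. Your route is conceptually attractive (it stays with classical parabolic tools and the original problem) at the price of two technicalities the paper's one-sided auxiliary problem is specifically designed to avoid: uniform-in-time Schauder estimates across a \emph{moving} boundary to make the limit $u^\star$ and its boundary derivative genuine, and the separate treatment of $g_\infty=-\infty$. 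For $(ii)\Rightarrow(iii)$ the paper simply cites the threshold-dynamics result \cite[Proposition 2.9]{YCW} on the fixed interval $[g_\infty,h_\infty]$, whereas you build an explicit exponentially decaying supersolution $Me^{-\beta t}\psi$ on a slightly larger interval together with an $\omega$-limit argument for the critical length; this is more self-contained. The $(iii)\Rightarrow(ii)$ subsolution $\epsilon\varphi$ is essentially identical to the paper's.

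Two points deserve a second look. First, you invoke Lemma~\ref{lem:comp1} to compare $u$ with supersolutions living on a \emph{fixed} enveloping interval $(g_\infty-\epsilon,h_\infty+\epsilon)$, but with $\overline g'\equiv 0$ and $\overline u_x(\cdot,g_\infty-\epsilon)>0$ the hypothesis $\overline g'\leqslant-\mu\overline u_x$ of that lemma fails; the comparison you want is the elementary one in which $u$ vanishes on its own free boundaries where $\overline u>0$, and should be stated as such rather than referred to Lemma~\ref{lem:comp1}. Second, your borderline case $h_\infty-g_\infty=\pi/\sqrt{f'(0)-d}$ relies on the \emph{strict} inequality $f(s)<f'(0)s$ for $s>0$, which you say is ``enforced by (H)''; as written, (H) only demands that $f(s)/s$ be monotonically decreasing, which is compatible with $f(s)=f'(0)s$ on a whole neighborhood of $0$, in which case the critical-length steady problem on $[g_\infty,h_\infty]$ has a one-parameter family of small positive equilibria and your pairing with the zero-eigenvalue eigenfunction gives no contradiction. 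The paper sidesteps this by delegating the borderline case to \cite{YCW}; in a self-contained argument you would need to either add strict decrease of $f(s)/s$ as a running hypothesis or exploit the fact that the actual habitat $[g(t),h(t)]$ is always a proper subset of $(g_\infty,h_\infty)$, so the instantaneous problem is strictly subcritical and a sliding/moving-supersolution argument still drives $u$ to zero.
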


\begin{proof}
``(i)$\Rightarrow$ (ii)". Without loss of generality we assume $h_\infty < -\infty$ and prove (ii) by contradiction.
Assume that $h_\infty-g_\infty > \pi/\sqrt{f'(0)- d }$, then there exists $t_1 \gg 1$ such that
\[
h(t_1) - g(t_1) > \frac{\pi}{\sqrt{f'(0)- d }}.
\]

Let us consider the following auxiliary problem:
\begin{equation}\label{subso}
\left\{
\begin{array}{ll}
v_t = v_{xx} -  d  v +f(v(t-\tau,x)), & t> t_1,\ x\in (g(t_1), \xi(t)),\\
v(t, \xi(t)) = 0,\quad \xi'(t)= -\mu v_x(t, \xi(t)),& t>t_1,\\
v (t,g(t_1))=0, & t> t_1,\\
\xi(t_1) = h(t_1),\ \  v(s, x)= u(s, x), & s\in[t_1-\tau, t_1],\ x\in [g(s), h(s)].
  \end{array}
 \right.
 \end{equation}
It is easy to check that $v$ is a subsolution of \eqref{p}, then $\xi(t)\leqslant h(t)$ and $\xi(\infty)<\infty$
by our assumption. Using a similar argument as in \cite[Lemma 3.3]{DGP} one can show that
$$
\|v(t,\cdot)- V(\cdot)\|_{C^2([g(t_1),\xi(t)])} \to 0,\quad \mbox{as } t\to\infty,
$$
where $V(x)$ is the unique positive solution of the problem
\[
V''- d  V +f(V)=0\ \ \mbox{ for}\ \ x\in(g(t_1),\xi(\infty)),\ \ \ \ V(g(t_1))=V(\xi(\infty))=0.
\]
Thus,
\[
\lim_{t\to\infty} \xi'(t)=-\mu \lim_{t\to\infty}v_x (t,\xi(t)) =-\mu V'(\xi(\infty)) = \delta,
\]
for some $\delta>0$, which contradicts the fact that $\xi(\infty) < \infty$.

\smallskip

``(ii)$\Rightarrow$(iii)". It follows from the assumption and \cite[Proposition 2.9]{YCW} that the unique positive
solution of the following problem
\begin{equation}\label{upbsoper}
\left\{
\begin{array}{ll}
v_t=v_{xx}- d  v+f(v(t-\tau,x)), & t>0,\ x\in[g_\infty,h_\infty],\\
 v(t,g_\infty)= v(t,h_\infty)=0, &  t>0,\\
 v(\theta,x)\geqslant 0, & \theta\in[-\tau,0],\ x\in[g_\infty,h_\infty],
  \end{array}
 \right.
 \end{equation}
with $v(\theta,x)\geqslant \phi(\theta,x)$ in $[-\tau,0]\times[g(\theta),h(\theta)]$, satisfies $v\to0$ uniformly
for $x\in[g_\infty,h_\infty]$ as $t\to\infty$. Then the conclusion (iii) follows easily from the comparison principle.

\smallskip

``(iii)$\Rightarrow$(ii)": Suppose by way of contraction argument that for some small $\varepsilon>0$ there exists $t_2\gg 1$
such that $h(t)-g(t)>\frac{\pi}{\sqrt{f'(0)- d }}+ 3\varepsilon$ for all $t>t_2-\tau$. Let $l_1:=\pi/\sqrt{f'(0)- d }+ \varepsilon$,
it is well known that the following eigenvalue problem
$$
\left\{
 \begin{array}{ll}
-\varphi_{xx} +  d  \varphi- f'(0)\varphi=\lambda_1\varphi, & 0<x<l_1,\\
 \varphi(0)=\varphi(l_1)=0,
 \end{array}
 \right.
$$
has a negative principal eigenvalue, denoted by $\lambda_1$,
whose corresponding positive eigenfunction, denoted by
$\varphi$, can be chosen positive and normalized by $\|\varphi\|_{L^{\infty}}=1$. Set
\[
w(t,x) :=\epsilon\varphi(x)\ \mbox{ for } x\in[0,l_1],
\]
with $\epsilon>0$ small such that
\[
f(\epsilon\varphi)\geqslant f'(0)\epsilon\varphi+
\frac{1}{2}\lambda_1\epsilon\varphi\ \ \mbox{ in }[0, l_1].
\]
It is easy to compute that for $x\in[0, l_1]$,
$$
w_t-w_{xx}+ d  w-f(w(t-\tau,x))= \epsilon\varphi [f'(0)+\lambda_1 ]
-f(\epsilon\varphi) \leqslant 0.
$$
Moreover one can see that
\[
0\leqslant  w(x) = \epsilon \varphi(x) <  u(t_2+s, x +g(t_2+s)+\varepsilon),\quad
 x\in [0, l_1],\ s\in[-\tau,0]
\]
provided that $\epsilon$ is sufficiently small.
Then we can apply the comparison principle to deduce
$$
u(t+t_2,x +g(t_2) +\varepsilon) \geqslant w(x)>0,\quad (t,x)\in[0,\infty)\times(0, l_1),
$$
contradicting (iii).

\smallskip
``(ii)$\Rightarrow$(i)". When (ii) holds, (i) is obvious. This proves the lemma.
\end{proof}

Next, we give a sufficient condition for vanishing, which indicates that if the initial domain
and initial function are both small, then the species dies out eventually in the environment.

\begin{lem}\label{vfsma}
Assume that {\bf(H)} holds. Let $(u,g,h)$ be a solution of \eqref{p}. Then vanishing happens provided that
$h(0)-g(0)<\frac{\pi}{\sqrt{f'(0)- d }}$ and $\|\phi\|_{L^\infty([-\tau,0]\times[g(\theta),h(\theta)])}$ is sufficient small.
\end{lem}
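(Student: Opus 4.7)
The plan is to construct an explicit supersolution $(\bar u,\bar g,\bar h)$ of \eqref{p} whose free boundaries stay confined to an interval of length strictly less than $\pi/\sqrt{f'(0)-d}$; then Lemma~\ref{lem:comp1} will force $h_\infty-g_\infty$ finite and subcritical, and Lemma~\ref{lemvansmall}((ii)$\Rightarrow$(iii)) will deliver the vanishing.

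First I would translate $x$ so that $g(0)=-h(0)=-l_0/2$, pick $\sigma>0$ so small that $l_0(1+\sigma) < \pi/\sqrt{f'(0)-d}$ (which yields a positive margin $\kappa := (\pi/(l_0(1+\sigma)))^{2} + d - f'(0) > 0$ for the linearized operator), and choose $\gamma\in(0,\kappa)$ with $\kappa - f'(0)(e^{\gamma\tau}-1) \geqslant \gamma$. Setting $\rho(t) := 1 + \sigma - (\sigma/2)e^{-\gamma t}$ for $t\geqslant 0$ and $\rho \equiv 1+\sigma/2$ on $[-\tau,0]$, I then take
\[
\bar h(t) = (l_0/2)\rho(t) = -\bar g(t),\qquad \bar u(t,x) = Me^{-\gamma t}\cos\!\Bigl(\tfrac{\pi x}{l_0\rho(t)}\Bigr),
\]
with $M>0$ to be tuned, extended by $0$ outside $[\bar g(t),\bar h(t)]$. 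By design $\bar u$ vanishes on both moving boundaries, is even in $x$, and $[g(\theta),h(\theta)]\subsetneq [\bar g(\theta),\bar h(\theta)]$ for all $\theta\in[-\tau,0]$.

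The verification splits naturally. For the interior delay-differential inequality, I would use the bound $f(s)\leqslant f'(0)s$ and, writing $\theta = \pi x/(l_0\rho(t))\in[-\pi/2,\pi/2]$, observe that $\bar u_t-\bar u_{xx}+d\bar u$ produces the dominant term $Me^{-\gamma t}\cos\theta\bigl[(\pi/(l_0\rho(t)))^{2}+d-\gamma\bigr]$ plus the nonnegative contribution $Me^{-\gamma t}(\rho'/\rho)\theta\sin\theta$; together with the elementary bound $\cos(\pi x/(l_0\rho(t-\tau)))\leqslant \cos\theta$ on the support of $\bar u(t-\tau,\cdot)$ (from monotonicity of $\rho$), the whole inequality reduces to the scalar condition $\kappa - f'(0)(e^{\gamma\tau}-1)\geqslant\gamma$, which is precisely how $\gamma$ was chosen. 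The Stefan inequality at $\bar h(t)$ simplifies to $M \leqslant M_0 := l_0^{2}\sigma\gamma(1+\sigma/2)/(4\mu\pi)$, and by evenness the corresponding one at $\bar g(t)$ follows. Finally, the history comparison $\bar u(\theta,x)\geqslant \phi(\theta,x)$ amounts to $Mm_0 \geqslant \|\phi\|_{L^\infty}$ where $m_0 := \cos(\pi/(2(1+\sigma/2)))>0$; this is compatible with $M\leqslant M_0$ precisely when $\|\phi\|_{L^\infty} \leqslant M_0 m_0$, which is the claimed smallness hypothesis.

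The main obstacle I foresee is the three-way balance forced by the time delay: the decay factor $e^{-\gamma t}$ turns into an amplification $e^{\gamma\tau}$ inside the delayed term $f(\bar u(t-\tau,\cdot))$, and this must be absorbed by the positive linear margin $\kappa$ on the slightly enlarged interval; that forces $\gamma$ small, which in turn forces $M$ small through the Stefan inequality, which finally forces $\|\phi\|_{L^\infty}$ small. Without the subcritical length hypothesis $h(0)-g(0)<\pi/\sqrt{f'(0)-d}$, the margin $\kappa$ cannot be made positive and the whole construction collapses.
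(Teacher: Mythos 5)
Your construction is correct and is essentially the same as the paper's proof: both build a supersolution of the form $\text{(exponentially decaying amplitude)}\times\cos(\pi x/\text{(slowly expanding width)})$ on an interval that stays bounded below the critical length, verify the delay--differential inequality by absorbing the $e^{\gamma\tau}$ amplification into the positive linear margin, and invoke the comparison principle together with Lemma~\ref{lemvansmall}. The only cosmetic differences are that you translate to a symmetric interval and let the amplitude carry the factor $e^{-\gamma\theta}$ back into the history window (the paper freezes the history supersolution at a constant profile), and you use $f(s)\leqslant f'(0)s$ directly while the paper uses $f(v)\leqslant (f'(0)+\varepsilon)v$ for small $v$; neither changes the argument. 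One small omission: you should note that after choosing $M\leqslant M_0$ you also shrink $M$ if necessary so that $Me^{\gamma\tau}\leqslant u^*$, since Lemma~\ref{lem:comp1} requires $\overline u\leqslant u^*$.
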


\begin{proof}
Set
\[
h_0=\frac{h(0)-g(0)}{2},
\]
then $h_0<\pi/(2\sqrt{f'(0)- d })$, so there exists a small $\varepsilon >0$ such that
\begin{equation}\label{choice of delta}
\frac{\pi^2}{4 (1+\varepsilon)^2 h^2_0} - (f'(0)+\varepsilon)e^{\varepsilon\tau} + d \geqslant \varepsilon.
\end{equation}
For such $\varepsilon$, we can find a small positive constant $\delta$ such that
$$
\pi \mu \delta \leqslant \varepsilon^2 h^2_0, \qquad f(v) \leqslant (f'(0) + \varepsilon) v
\quad \mbox{for } v\in [0,\delta].
$$
Define
\begin{align*}
 & k(t) := h_0 \Big( 1+\varepsilon - \frac{\varepsilon}{2} e^{-\varepsilon t}
\Big), \quad w(t,x):= \delta e^{-\varepsilon t} \cos\Big(
\frac{\pi x}{2 k (t)}\Big),\ \ t>0,\ x\in[-k(t),k(t)],\\
 & k(\theta)\equiv k_0 := h_0 \Big( 1+\frac{\varepsilon}{2} \Big), \quad w(\theta,x)\equiv
w_0(x):= \delta \cos\Big(\frac{\pi x}{h_0(2+\varepsilon)}\Big),\ \ \theta\in[-\tau,0],\ x\in[-k_0,k_0].
\end{align*}
and extend $w(t,x)$ by $0$ for $t\in[-\tau,\infty)$, $x\in(-\infty, -k(t)]\cup [k(t),\infty)$.

A direct calculation shows that for $t>0$, $x\in(-k(t),k(t))$
\begin{eqnarray*}
&& w_t - w_{xx} + d  w - f(w(t-\tau,x))\\
&=& \left[ \frac{\pi^2}{4k^2(t)}-\varepsilon + d
-(f'(0)+\varepsilon)\frac{w(t-\tau,x)}{w(t,x)}  +\frac{\pi x k'(t)}{2k^2(t)}\tan \Big(
\frac{\pi x}{2 k (t)}\Big)\right] w\\
& \geqslant& \left[ -\varepsilon + \frac{\pi^2}{4k^2(t)}+ d
-(f'(0)+\varepsilon)\frac{w(t-\tau,x)}{w(t,x)}  \right] w,
\end{eqnarray*}
where we have used $k'(t)>0$, $k(t)>0$ for $t>0$ and $y\tan y\geqslant 0$ for $y\in(-\frac{\pi}{2},\frac{\pi}{2})$.

When $t\geqslant \tau$ and $x\in(-k(t),k(t))$,  it is easy to check that
\begin{eqnarray*}
\mathcal{A} & := & -\varepsilon + \frac{\pi^2}{4k^2(t)}+ d
-(f'(0)+\varepsilon)\frac{w(t-\tau,x)}{w(t,x)} \\
& \geqslant &  -\varepsilon + \frac{\pi^2}{4h_0^2(1+\varepsilon)^2}+ d
-(f'(0)+\varepsilon)e^{\varepsilon\tau} \geqslant 0,
\end{eqnarray*}
where the fact that $\cos\Big(\frac{\pi x}{2 k (t-\tau)}\Big)\leqslant \cos\Big(\frac{\pi x}{2 k (t)}\Big)$ for
$(t,x)\in[\tau,\infty)\times[-k(t),k(t)]$ and the monotonicity of $k(t)$ in $t\in[0,\infty)$ are used. If
$t\in[0, \tau)$ and $x\in(-k(t),k(t))$, we have that
\begin{eqnarray*}
\mathcal{A} &  \geqslant & -\varepsilon + \frac{\pi^2}{4h_0^2(1+\varepsilon)^2}+ d
-(f'(0)+\varepsilon)e^{\varepsilon t}\frac{\cos\Big(\frac{\pi x}{h_0(2+\varepsilon)}\Big)}{\cos\Big(\frac{\pi x}{2k(t)}\Big)} \\
& \geqslant &  -\varepsilon + \frac{\pi^2}{4h_0^2(1+\varepsilon)^2}+ d
-(f'(0)+\varepsilon)e^{\varepsilon\tau} \geqslant 0.
\end{eqnarray*}
Thus we have
$$
w_t - w_{xx} + d  w - f(w(t-\tau,x)) \geqslant 0\ \ \mbox{ in }\ (0,\infty)\times(-k(t),k(t)).
$$

On the other hand,
$$
 k'(t)=\frac{\varepsilon^2 h_0}{2} e^{-\varepsilon t}\geqslant \frac{\pi \mu \delta}{2h_0 } e^{-\varepsilon t}\geqslant
 \frac{\pi \mu \delta}{2k(t)} e^{-\varepsilon t} \geqslant - \mu w_x(t, k(t)) =\mu w_x(t, -k(t)).
$$
As a consequence, $(w(t,x), -k(t), k(t))$ will be a supersolution of \eqref{p} if  $w(\theta,x)\geqslant \phi (\theta,x)$ in $[-\tau,0]\times[g(\theta),h(\theta)]$.
Indeed, choose $\sigma_1 := \delta\cos \frac{\pi}{2+\varepsilon}$, which depends only on $\mu, h_0,  d $ and $f$. Then when $\|\phi\|_{L^\infty([-\tau,0]\times[g(\theta),h(\theta)])}
\leqslant \sigma_1$ we have $\phi(\theta,x)\leqslant \sigma_1 \leqslant w(\theta,x)$ in $[-\tau,0]\times[g(\theta), h(\theta)]$, since $h_0 < k(0)= h_0 (1+\frac{\varepsilon}{2})$.
It follows from the comparison principle that
$$
h(t)\leqslant k(t) \leqslant h_0 (1+\varepsilon),\; h_\infty<\infty.
$$
This, together with the previous lemma, implies that vanishing happens.
\end{proof}
\begin{remark}\rm
When $\tau=0$, the proof of Lemma \ref{vfsma} reduces to that of \cite[Theorem 3.2(i)]{DuLou}.
\end{remark}

We now present a sufficient condition for spreading, which reads as follows.

\begin{lem}\label{lemuto1}
Assume that {\bf(H)} holds. If $h(0)-g(0)\geqslant \pi/\sqrt{f'(0)- d }$, then spreading happens
for every positive solution $(u, g, h)$ of \eqref{p}.
\end{lem}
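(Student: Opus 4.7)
The plan is to argue by contradiction using the equivalences of Lemma~\ref{lemvansmall} to exclude the vanishing case, and then construct subsolutions on an increasing family of bounded intervals to force $u\to u^*$ locally uniformly.

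Since $g'(t)<0<h'(t)$ strictly for every $t>0$ (Hopf lemma, as remarked before Theorem~\ref{thm:asy be}), for any $t>0$ we have
\[
h_\infty-g_\infty > h(t)-g(t) > h(0)-g(0) \geqslant \frac{\pi}{\sqrt{f'(0)-d}}.
\]
Condition (ii) of Lemma~\ref{lemvansmall} therefore fails; by the equivalences, (i) fails, so $(g_\infty,h_\infty)=\R$, and (iii) fails, so $u\not\to 0$. To upgrade this to $u(t,x)\to u^*$ locally uniformly, I would fix any $R>\pi/(2\sqrt{f'(0)-d})$ and pick $T_R$ large enough that $[-R,R]\subset(g(t),h(t))$ for all $t\geqslant T_R-\tau$. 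Under {\bf(H)}, the elliptic problem $V''-dV+f(V)=0$ on $(-R,R)$ with $V(\pm R)=0$ admits a unique positive solution $V_R$, and $V_R\to u^*$ locally uniformly as $R\to\infty$ by standard phase-plane analysis.

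Let $v_R$ solve the delayed IBVP on $[-R,R]$ with zero Dirichlet boundary data and a smooth positive initial function on $[T_R-\tau,T_R]$ dominated pointwise by $u(s,\cdot)$. By Lemma~\ref{lem:comp2}, $u\geqslant v_R$ on $[T_R,\infty)\times[-R,R]$. The convergence-to-equilibrium result for quasi-monotone delayed reaction-diffusion equations (the same \cite{YCW}-based argument already invoked in the proof of Lemma~\ref{lemvansmall}) yields $v_R(t,\cdot)\to V_R$ uniformly on $[-R,R]$. Hence $\liminf_{t\to\infty}u(t,x)\geqslant V_R(x)$ on $[-R,R]$; combined with the upper bound $u\leqslant u^*$ from the maximum principle and with $V_R\to u^*$ as $R\to\infty$, this gives $u\to u^*$ locally uniformly in $\R$. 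The main obstacle is arranging the initial datum for $v_R$ to be strictly positive on $[-R,R]$ so that its long-time limit is the nontrivial steady state $V_R$ rather than $0$; this follows routinely from the strong maximum principle applied to $u$ on $(g(t),h(t))$ once $(g_\infty,h_\infty)=\R$ has been secured, but it is the step where one must most carefully exploit the Fisher--KPP structure encoded in {\bf(H)}.
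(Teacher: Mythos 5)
Your proposal is correct and follows essentially the same route as the paper: rule out vanishing via the equivalences of Lemma~\ref{lemvansmall}, then compare $u$ from below with a delayed Dirichlet problem on a growing family of fixed intervals, invoke the \cite{YCW} convergence-to-steady-state result to send those subsolutions to the positive elliptic equilibria $V_R$, and let $R\to\infty$. The only cosmetic difference is that the paper takes the initial datum for the auxiliary problem to be the restriction of $u$ itself on $[t_n,t_n+\tau]\times[-L_n,L_n]$ (which is already positive in the interior by the strong maximum principle), so the positivity issue you flag at the end resolves exactly as you anticipate.
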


\begin{proof}
Since $g'(t)<0<h'(t)$ for $t>0$, we have $h(t)-g(t)>\pi/\sqrt{f'(0)- d }$ for any $t>0$. So the conclusion
$-g_\infty = h_\infty =\infty$ follows from Lemma \ref{lemvansmall}. In what follows we prove
\begin{equation}\label{utoPt}
\lim_{t\to\infty}u(t,x)=u^* \mbox{ locally uniformly in $\R$}.
\end{equation}

First, it is well known that for any $L>\pi/(2\sqrt{f'(0)- d })$, the following problem
\[
W_{xx}- d  W+f(W)=0,\ \ \ x\in(-L,L),\ \ \ W(\pm L)= 0,
\]
admits a unique positive solution $W_L$, which is increasing in $L$ and satisfies
\begin{equation}\label{WL1}
\lim_{L\to\infty}W_L(x)=u^* \mbox{ locally uniformly in $\R$}.
\end{equation}
Moreover we can find an increasing sequence of positive numbers $L_n$ with $L_n\to\infty$ as $n\to\infty$ such that
$L_n>\pi/\sqrt{f'(0)- d }$ for all $n\geqslant1$. Since $W_{L_n}$ converges to $u^*$ locally uniformly in $\R$, we
can choose $t_n$ such that $h(t)\geqslant L_n$ and $g(t)\leqslant-L_n$ for $t\geqslant t_n$. It then follows from \cite{YCW}
the following problem
\[
\left\{
\begin{array}{ll}
 w_t =w_{xx}- d  w +f(w(t-\tau,x)), & t\geqslant t_n+\tau,\ x\in[-L_n,L_n],\\
 w(t,\pm L_n)=  0, &  t\geqslant t_n+\tau,\\
 w(s,x)=u(s,x),  & s\in[t_n, t_n+\tau],\ x\in[-L_n,L_n],
\end{array}
\right.
\]
has a unique positive solution $w_n(t,x)$, which satisfies that
\[
w_n(t,x)\to W_{L_n}(x) \ \mbox{ uniformly for } x\in[-L_n,L_n]\ \mbox{ as } t\to\infty.
\]
Applying the comparison principle we have $w_n(t,x)\leqslant  u(t,x)$
for all $t\geqslant t_n+\tau$,  $x\in [-L_n,L_n]$. This, together with \eqref{WL1}, yields that
\begin{equation}\label{uin1}
\liminf_{t\to\infty} u(t ,x) \geqslant  u^*\ \mbox{ locally uniformly for } x\in\R.
\end{equation}
Later, since the initial data $u_0(s,x)$ satisfies $0\leqslant u_0(s,x)\leqslant u^*$ for $(s,x)\in[-\tau,0]\times[g(s),h(s)]$,
it thus follows from the comparison principle that
\[
\limsup_{t\to\infty} u(t ,x) \leqslant  u^*\ \mbox{ locally uniformly for } x\in\R.
\]

Combining with \eqref{uin1}, one can easily obtain \eqref{utoPt}, which ends the proof of this lemma.
\end{proof}

Now we are ready to give the proof of Theorem \ref{thm:asy be}.

\smallskip

\noindent
 {\bf Proof of Theorem \ref{thm:asy be}}. It is easy to see that there are two possibilities: (i) $h_\infty-g_\infty\leqslant \pi/\sqrt{f'(0)- d }$;
 (ii) $h_\infty-g_\infty>\pi/\sqrt{f'(0)- d }$.  In case (i), it follows from Lemma \ref{lemvansmall} that $\lim_{t\to\infty}
\|u(t,\cdot)\|_{L^\infty([g(t),h(t)])}=0$. For case (ii), it follows from Lemma \ref{lemuto1} and its proof that $(g_\infty, h_\infty)=\R$ and
$u(t,x)\to u^*$ as $t\to\infty$ locally uniformly in $\R$, which ends the proof. \hfill
$\square$

\section{Asymptotic profiles of spreading solutions}\label{sec:asybeh}
Throughout this section we assume that {\bf(H)} holds and $(u,g,h)$ is a solution of \eqref{p} for which spreading happens. In
order to determine the spreading speed, we will construct some suitable sub- and supersolutions based on semi-waves.  Let $c^*$ and
$q_{c^*}(z)$ be given in Theorem \ref{waves}.  The first subsection covers the proof of the boundedness for $|h(t)-c^*t|$ and $|g(t)+c^*t|$.
Based on these results, we prove Theorem \ref{thm:profile of spreading sol} in the second subsection.

\subsection{Boundedness for $|h(t)-c^*t|$ and $|g(t)+c^*t|$.}\label{sub51}
Let us begin this subsection with the following estimate.
\begin{lem}\label{lem:u-to-1} Let $(u, g, h)$ be a solution of \eqref{p} for which spreading happens.
Then for any $c\in (0,c^*)$, there exist small $\beta^*\in (0,  d -f'(u^*))$ , $T>0$ and $ M>0$
such that for $t\geqslant T$,
\begin{itemize}
\item[\rm (i)]
$
[g(t), h(t)]\supset [-ct, ct];
$
\item[\rm (ii)]
$ u(t,x)\geqslant u^*\big(1-M e^{-\beta^* t}\big)\quad \mbox{for } x\in [-ct, ct]; $
\item[\rm (iii)]
$ u(t,x) \leqslant u^*\big(1+M e^{-\beta^* t}\big) \quad \mbox{for } x \in [g(t),
h(t)]. $
\end{itemize}
\end{lem}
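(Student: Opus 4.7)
My plan is to handle the three conclusions in the order (iii), (i), (ii), since the delicate estimate (ii) will rely on both of the first two. The upper bound (iii) is trivial: the constant $u^*$ is a spatially homogeneous solution of the PDE (because $f(u^*)=du^*$), and it dominates both the initial data (by \eqref{def:X}) and the free-boundary value $0$. Assumption \textbf{(H)} provides the monotonicity of $f$ on $[0,u^*]$ needed for the quasi-monotone comparison, so Lemma \ref{lem:comp1} gives $u\leqslant u^*$; hence (iii) holds with $M=0$.

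For (i), I fix $c_1\in(c,c^*)$ and use the semi-wave $q_{c_1}$ from Theorem \ref{thm:semiwave}. The travelling profile
\[
\underline{u}(t,x):=q_{c_1}(c_1 t+X-x),\quad \underline{h}(t):=c_1 t+X,
\]
is an \emph{exact} solution of the PDE on $\{x<\underline{h}(t)\}$ (this is precisely the semi-wave equation), and the Stefan condition is saturated at the moving boundary: $\underline{h}'(t)=c_1=\mu q_{c_1}'(0)=-\mu\underline{u}_x(t,\underline{h}(t))$. Spreading (Theorem \ref{thm:asy be}(i)) gives $u\to u^*$ locally uniformly as $t\to\infty$, so by choosing a large time $T_0$ and an appropriate shift $X$, the history $u$ on $[T_0-\tau,T_0]$ and the value of $u$ at a fixed left anchor $g_0$ can be arranged to dominate the corresponding values of $\underline{u}$, with strict slack provided by $q_{c_1}<u^*$. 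Applying the variant of Lemma \ref{lem:comp2} (with left boundary fixed at $g_0$) then yields $h(t)\geqslant c_1 t+X$ for all $t\geqslant T_0$, and the symmetric construction on the left produces the analogous bound for $g(t)$. Since $c_1>c$, this is exactly (i) for large $t$.

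For (ii), I linearise around $u^*$. Setting $V:=u^*-u$ and using the $C^{1+\tilde\nu}$ regularity of $f$, the equation becomes
\[
V_t=V_{xx}-dV+f'(u^*)V(t-\tau)+O\big(V^{1+\tilde\nu}(t-\tau)\big).
\]
The spatially constant candidate supersolution $\overline V(t):=u^*Me^{-\beta^*t}$ satisfies the interior inequality once $\beta^*\leqslant d-f'(u^*)e^{\beta^*\tau}$, which, since $f'(u^*)<d$, admits a range of positive solutions $\beta^*\in(0,d-f'(u^*))$. The more delicate point is the boundary comparison at $x=\pm ct$: by (i), the point $x=ct$ sits at distance at least $(c_1-c)t-O(1)$ from $h(t)$, and the semi-wave comparison from the previous paragraph gives $u(t,ct)\geqslant q_{c_1}((c_1-c)t+O(1))$. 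Linearising the semi-wave ODE around $u^*$ at $z=+\infty$ yields the characteristic equation
\[
\lambda^2+c_1\lambda-d+f'(u^*)e^{\lambda c_1\tau}=0
\]
with a unique positive root $\lambda^*$, which controls the exponential approach $u^*-q_{c_1}(z)\leqslant Ce^{-\lambda^*z}$. Shrinking $\beta^*$ further to enforce $\beta^*\leqslant\lambda^*(c_1-c)$, the supersolution $\overline V$ dominates $V$ on $\{x=\pm ct\}$, and the comparison principle applied on the parabolic region $\{(t,x):t\geqslant T,|x|\leqslant ct\}$ completes (ii).

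The main obstacle is the coordination of the single rate $\beta^*$ with two delay-dependent constraints. The linearised interior inequality $d-\beta^*\geqslant f'(u^*)e^{\beta^*\tau}$ forces $\beta^*$ to lie strictly below $d-f'(u^*)$ because the delay amplifies $f'(u^*)$ by the factor $e^{\beta^*\tau}>1$, while $\beta^*\leqslant\lambda^*(c_1-c)$ ties the admissible rate to the decay rate of $q_{c_1}$ at $+\infty$, itself determined by a transcendental equation of the same flavour as Lemma \ref{lem:eigen} (now posed at the equilibrium $u^*$ rather than $0$). Verifying that both inequalities can be met simultaneously by a common positive $\beta^*$, together with the routine but delay-sensitive bookkeeping of the initial history on $[T,T+\tau]$ in the comparison principle and the absorption of the $O(V^{1+\tilde\nu})$ remainder, is where the real work lies.
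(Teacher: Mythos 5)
Your conclusion (iii) follows from $u\leqslant u^*$ exactly as you describe, and this matches what the paper records. The substantive difficulty is the lower bound (ii), and here your plan has a circularity that the one-sided semi-wave comparison cannot break.

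First, a small but telling slip in (i): for $c_1\in(c,c^*)$ one has $\mu q_{c_1}'(0)>\mu q_{c^*}'(0)=c^*>c_1$ (recall $q_c'(0)$ is strictly decreasing in $c$ by Proposition \ref{prop:semiwave}), so the Stefan condition is \emph{not} saturated by $q_{c_1}$; the inequality $\underline h'=c_1<-\mu\underline u_x$ still has the correct sign for a subsolution, so this by itself is harmless. What is not harmless is the fixed left anchor. You must maintain $u(t,g_0)\geqslant q_{c_1}(c_1t+X-g_0)$ for \emph{all} $t\geqslant T_0$; both sides tend to $u^*$, and the asserted "strict slack $q_{c_1}<u^*$" evaporates because $\sup_z q_{c_1}(z)=u^*$. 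Without an a priori rate for $u(\cdot,g_0)\to u^*$ — which is part of what the lemma is supposed to produce — this inequality cannot be guaranteed. The comparison for (i) can be salvaged by replacing $q_{c_1}$ with $(1-\delta)q_{c_1}$: sublinearity in \textbf{(H)} gives $f((1-\delta)s)\geqslant(1-\delta)f(s)$ so the interior inequality survives, the strict inequality $\mu q_{c_1}'(0)>c_1$ absorbs the factor $(1-\delta)$ in the Stefan condition, and $u(t,g_0)\geqslant(1-\tfrac\delta2)u^*>(1-\delta)u^*\geqslant(1-\delta)q_{c_1}$ for $t$ large settles the anchor.

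But that patch destroys your argument for (ii). The exponential seed you need — $u^*-u(t,\pm ct)\leqslant Ce^{-\lambda^*(c_1-c)t}$ on the lateral boundaries of your comparison region — would require $u\geqslant q_{c_1}(\cdots)$ without the factor $(1-\delta)$. The patched subsolution yields only $u^*-u(t,ct)\leqslant\delta u^*+O(e^{-\lambda^*(c_1-c)t})$, where the constant term $\delta u^*$ never decays. So the boundary estimate and the interior linearisation are feeding each other: to control $V=u^*-u$ exponentially on $\{x=\pm ct\}$ you already need the exponential convergence that you are trying to establish.

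The paper avoids this circularity with a Fife--McLeod two-front subsolution,
\[
\underline u(t,x)=\max\bigl\{0,\ q_{c^*}(x+c^*t+\xi(t))+q_{c^*}(c^*t-x+\xi(t))-u^*-p(t)\bigr\},
\]
in which \emph{both} fronts move (there is no fixed anchor), the additive perturbation $p(t)=p_0e^{-\beta t}$ furnishes the exponential envelope directly, and the decreasing shift $\xi(t)$ is chosen so that the interior and Stefan inequalities hold in both regimes of $q_{c^*}(\zeta^-_\tau)$ (near $u^*$ and far from it). Because the construction is two-sided, no boundary estimate of the type you are missing is ever invoked; (i), (ii), and in fact the sharper linear-in-$t$ bound for $h(t)-c^*t$ all fall out simultaneously. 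To make your linearisation-around-$u^*$ strategy close, you would need to import some such two-front (or otherwise anchor-free) device; as written, the single one-sided semi-wave comparison does not deliver the exponential boundary control that your step (ii) presupposes.
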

\begin{proof}
In order to prove conclusions (i) and (ii), inspired by \cite{FM}, we will use the semi-wave $q_{c^*}$ to construct
the suitable subsolution. Here we mainly use the the monotonicity and exponentially convergent of $q_{c^*}$.

(i)\  Since  $q_{c^*}(z)$ is the unique positive solution of
 \begin{equation}\label{semiwave112}
\left\{
 \begin{array}{ll}
 q_{c^*}'' - c^*q_{c^*}'- d  q_{c^*}+ f( q_{c^*}(z-c^*\tau))=0,\ \ \ q_{c^*}'(z)>0, & z>0,\\
 q_{c^*}(z)=0,  & z\leqslant 0,\\
 \mu q_{c^*}'(0)=c^*,\ \ q_{c^*}(\infty)=u^*,
 \end{array}
 \right.
\end{equation}
then it is easy to check that $q_{c^*}''(0)> 0$. Since $q_{c^*}'(z)> 0$ for $z\geqslant0$ and $q_{c^*}(z)\to u^*$ as $z\to\infty$,
thus there is $z_0\gg 1$ such that $q_{c^*}''(z)<0$ for $z\geqslant z_0$. Thus there exists $\hat{z} \in (0,\infty)$ such that
$q_{c^*}''(\hat{z})=0$ and $q_{c^*}''(z)>0 $ for $z\in[0,\hat{z})$. This means that $q_{c^*}'(z)$ is increasing in $z\in[0,\hat{z})$.
Let $\hat{p}_0 \in (0,q_{c^*}(\hat{z}))$ be small. Define
\[
G(u,p)=\left\{
\begin{array}{ll}
 d +[f(u-p)-f(u)]/p ,& p>0 ,\\
 d -f'(u), & p=0,
\end{array} \right.
\]
for $p>0$ and $u>p$. Then $G(u,p)$ is a continuous function for $0 \leqslant p \leqslant \hat{p}_0$ and $G(u^*,p)>0$,
$G(u^*,0)= d -f'(u^*)>0$, thus there exists $0<\gamma\ll  d $ such that $G(u^*,p) \geqslant 2\gamma$ for
$0\leqslant p\leqslant \hat{p}_0 $. By continuity, there exists $\rho>0$ small such that $G(u,p) \geqslant \gamma $ for
$u^*-\rho \leqslant u\leqslant u^*$, $0\leqslant p\leqslant \hat{p}_0$. Furthermore, as $f(u^*)= d  u^*$, then there
is a constant $b>0$ such that
\begin{equation}\label{fub1}
f(v)- d  v\leqslant b(u^*-v)\ \ \mbox{ for }\ v\in[u^*-\rho, u^*].
\end{equation}
Inspired by \cite{FM}, let us construct the following function:
\[
\underline{u}(t,x):= \max\{0,\ q_{c^*}(x+c^*t+\xi(t))+q_{c^*}(c^*t-x+\xi(t))-u^*-p(t)\},\ \ t>0,
\]
and denote $\underline{g}(t)$ and $\underline{h}(t)$ be the zero points of $\underline{u}(t,x)$ with $t>0$,  that is
\[
\underline{u}(t,\underline{g}(t))=\underline{u}(t,\underline{h}(t))=0.
\]

In the following, we will  show that $(\underline{u},\underline{g}, \underline{h})$ is a subsolution of problem \eqref{p}. We only
prove the case where $x\geqslant 0$, since the other is analogous. For any function $J$ depended on $t$, we write $J_{\tau}(t):=J(t-\tau)$
if no confusion arises. For simplicity of notations, we will write
\[
\zeta^-(t):=-x+c^*t+\xi(t), \ \zeta^+(t):=x+c^*t+\xi(t),\ \ \zeta^-_\tau:=\zeta^-(t-\tau), \ \zeta^+_\tau:=\zeta^+(t-\tau).
\]

Firstly, a direct calculation shows that for $(t,x)\in(\tau,\infty)\times[0, \underline{h}(t)]$,
\begin{align*}
\mathcal{N}[\underline{u}]:&=\underline{u}_t-\underline{u}_{xx}+ d  \underline{u}-f(\underline{u}(t-\tau,x))\\
&=\xi'[q'_{c^*}(\zeta^-)+q'_{c^*}(\zeta^+)]+f(q_{c^*}(\zeta^-_\tau))+f(q_{c^*}(\zeta^+_\tau))\\
&\ \ \ -f(q_{c^*}(\zeta^-_\tau)+q_{c^*}(\zeta^+_\tau)-u^*-p_{\tau})- d (u^*+p) -p'.
\end{align*}
Assume that $\xi'(t) \leqslant 0$, and choose $\xi$ large such that  $u^*-\frac{\rho}{2}\leqslant q_{c^*}(\zeta^+_\tau)\leqslant u^*$
in $(\tau,\infty) \times[0, \underline{h}(t)]$. The monotonicity of $q_{c^*}$ and its exponential rate of convergence to $u^*$ at $\infty$
imply that if we choose $\xi$ sufficiently large, then there exist positive constants  $\nu$, $K_0$ and $K$ such that
\[
u^*-q_{c^*}(\zeta^+_\tau)\leqslant K_0e^{-\nu \zeta^+_\tau}\leqslant Ke^{-\nu(\xi(t)+c^*t)}.
\]
Set $p(t)=p_0e^{-\beta t}$ with $p_0:=\frac{1}{2}\min\{\hat{p}_0,\ \frac{\rho}{2}\}$ and $\beta:=\frac{1}{2}\min\{\nu c^*,\ \alpha_0\}$, where
$\alpha_0$ is the unique zero point of
\[
 d  (e^{\tau y}-1)-\gamma e^{\tau y}+y=0.
\]

Thus, when $q_{c^*}(\zeta^-_\tau)\in[u^*-\rho, u^*]$ and $(t,x)\in(\tau,\infty)\times[0, \underline{h}(t)]$, since $q_{c^*}'(z) \geqslant 0$, then
\begin{align*}
\mathcal{N}[\underline{u}]&=\xi'[q'_{c^*}(\zeta^-)+q'_{c^*}(\zeta^+)]+f(q_{c^*}(\zeta^-_\tau))+f(q_{c^*}(\zeta^+_\tau))\\
&\ \ \ -f(q_{c^*}(\zeta^-_\tau)+q_{c^*}(\zeta^+_\tau)-u^*-p_{\tau})- d (u^*+p) -p'\\
&\leqslant \gamma [q_{c^*}(\zeta^+_\tau)-u^*-p_{\tau}]+b[u^*-q_{c^*}(\zeta^+_\tau)]+ d (p_{\tau}-p)-p'\\
&\leqslant b[u^*-q_{c^*}(\zeta^+_\tau)]+ d (p_{\tau}-p)-p'-\gamma p_{\tau}\\
&\leqslant Kbe^{-\nu(\xi(t)+c^*t)}+p_0e^{-\beta t}\big[ d  \big(e^{\beta \tau}-1\big)-\gamma e^{\beta \tau}+\beta\big]\leqslant 0,
\end{align*}
provided that $\xi$ is sufficiently large.

For the part $ q_{c^*}(\zeta^-_\tau)\in[0,u^*-\rho]$, then for $(t,x)\in(\tau,\infty)\times[0, \underline{h}(t)]$ and sufficiently large $\xi$,
there are two positive constants $d_1$ and $d_2$ where $d_1<1$ such that $q'_{c^*}(\zeta^-)+q'_{c^*}(\zeta^+)\geqslant d_1$, and
\[
f\big(q_{c^*}(\zeta^-_\tau)\big)-f\big(q_{c^*}(\zeta^-_\tau)+q_{c^*}(\zeta^+_\tau)-u^*-p_{\tau}\big)+ d [q_{c^*}(\zeta^+_\tau)-u^*-p_{\tau}]\leqslant d_2[u^*+p_{\tau}-q_{c^*}(\zeta^+_\tau)],
\]
thus we have
\begin{align*}
\mathcal{N}[\underline{u}]&=\xi'[q'_{c^*}(\zeta^-)+q'_{c^*}(\zeta^+)]+f(q_{c^*}(\zeta^-_\tau))+f(q_{c^*}(\zeta^+_\tau))\\
&\ \ \ -f(q_{c^*}(\zeta^-_\tau)+q_{c^*}(\zeta^+_\tau)-u^*-p_{\tau})- d (u^*+p) -p'\\
&\leqslant d_1\xi'+d_2 [u^*+p_{\tau}-q_{c^*}(\zeta^+_\tau)]+b[u^*-q_{c^*}(\zeta^+_\tau)+ d (p_{\tau}-p)-p'\\
&\leqslant d_1\xi'+(d_2+b)Ke^{-\nu(\xi+c^*t)} +p_0e^{-\beta t}\big[d_2e^{\beta \tau}+ d \big(e^{\beta \tau}-1\big)+\beta\big]\\
&\leqslant d_1\xi'+p_0e^{-\beta t}\big[d_2e^{\beta \tau}+ d (e^{\beta \tau}-1)+2\beta\big].
\end{align*}
Now let us choose $\xi$ satisfies
\[
d_1\xi'+\kappa p_0e^{-\beta t}=0
\]
with $\xi(0)=\xi_0$ sufficiently large, and $\kappa:=d_2e^{\beta \tau}+ d \big(e^{\beta \tau}-1\big)+2\beta$, then $\xi'(t)\leqslant 0$.
Hence from the above we obtain that $\mathcal{N}[\underline{u}]\leqslant 0$ in this part.

Next, let us check the free boundary condition. When $x=\underline{h}(t)$, we set $\zeta_1(t)=-\underline{h}(t)+c^*t+\xi(t)$ and
$\zeta_2(t)=\underline{h}(t)+c^*t+\xi(t)$, then
\begin{equation}\label{qq1}
q_{c^*}(\zeta_1(t))+q_{c^*}(\zeta_2(t))=u^*+p(t).
\end{equation}
We differentiate \eqref{qq1} with respect to  $t$ to obtain
\begin{equation}\label{hf1}
\big[q_{c^*}'(\zeta_2)-q_{c^*}'(\zeta_1)\big]\big(\underline{h}'(t)-c^*\big)=
p'-2c^*q_{c^*}'(\zeta_2)-\big[q_{c^*}'(\zeta_2)+q_{c^*}'(\zeta_1)\big]\xi'.
\end{equation}
By shrinking $p_0$ and enlarge $\xi_0$ if necessary, then we can see that $\zeta_2(t)\gg1$, and
$q_{c^*}(\zeta_2(t))\approx u^*$. This, together with \eqref{qq1}, yields that $q_{c^*}(\zeta_1(t))\approx p(t)$.
Since $q''_{c^*}(z)>0>q''_{c^*}(y)$ for $0\leqslant z\ll 1$ and $y\gg 1$ and $q'_{c^*}(z)\searrow 0$ as $z\to\infty$, thus we have
\begin{equation}\label{q1q21}
0<q_{c^*}'(\zeta_2)< q_{c^*}'(0)< q_{c^*}'(\zeta_1).
\end{equation}
Thanks to the choice of $\xi(t)$, we can compute that
\begin{equation}\label{q1q22}
p'-2c^*q_{c^*}'(\zeta_2)-[q_{c^*}'(\zeta_2)+q_{c^*}'(\zeta_1)]\xi'\geqslant \big(\frac{\kappa q_{c^*}'(0)}{d_1}-\beta\big) p_0e^{-\beta t}-2c^*K_1e^{-\nu(\xi(t)+c^*t)}\geqslant 0,
\end{equation}
where $K_1$ is a positive constant, $\kappa:=d_2e^{\beta \tau}+ d \big(e^{\beta \tau}-1\big)+2\beta>2\beta$ and we have used that
by shrinking $d_1$ if necessary, then $\kappa q_{c^*}'(0)>\beta d_1$.

It follows from \eqref{hf1}, \eqref{q1q21}, \eqref{q1q22} and the monotonicity of $q_{c^*}'(z)$ in $z$ that
\[
 \underline{h}'(t)\leqslant c^*=\mu q_{c^*}'(0)\leqslant \mu[q_{c^*}'(\zeta_1)-q_{c^*}'(\zeta_2)]=-\mu \underline{u}_x(t,\underline{h}(t)).
\]

Using \eqref{qq1} again, it is easy to see that $\zeta_1(t)$ is decreasing in $t\geqslant T_1$, thus for all $t\geqslant T_1$,
\begin{equation}\label{huh}
\underline{h}(t)-c^*t\geqslant \tilde{C}_0:=\underline{h}(T_1)-c^*T_1+\xi(\infty)-\xi(0).
\end{equation}
Since $(u,g,h)$ is a spreading solution of \eqref{p}, then there exists $T_2>0$ such that
\begin{align*}
 & u(T_1+T_2+\tilde{s},x)\geqslant \underline{u}(T_1+\tau,x)\ \mbox{ for }\ \tilde{s}\in[0,\tau],\ x\in[\underline{g}(\tau),\underline{h}(\tau)],\\
 & g(T_1+T_2)\leqslant \underline{g}(T_1+\tau)\ \ \mbox{and }\ h(T_1+T_2)\geqslant \underline{h}(T_1+\tau).
\end{align*}

Consequently, $(\underline{u},\underline{g}, \underline{h})$ is a subsolution of problem \eqref{p}, then we can apply the comparison
principle to conclude that $u(t+T_1+T_2,x)\geqslant \underline{u}(t+T_1,x)$, $h(t+T_1+T_2)\geqslant \underline{h}(t+T_1)$ for $t>0$, $x\in[0,\underline{h}(t)]$.
This, together with \eqref{huh}, implies that
\[
h(t)-c^*t\geqslant -C_1\ \ \ \mbox{ for } t>0,
\]
with $C_1:=-|\tilde{C}_0|-h(T_1+T_2+\tau)-c^*(T_1+T_2+\tau)$. Similarly, by enlarging $C_1$ if necessary,
we can have $g(t)+c^*t\leqslant C_1$ for $t>0$.
Thus result (i) holds for large $T$.

\smallskip

(ii)\ From the proof of (i), it is easy to see that $u(t+T_2)\geqslant  \underline{u}(t,x)$ for $t>T_1$.
The monotonicity of $q_{c^*}$ and its exponential rate of convergence to
$u^*$ at $\infty$ can be used again to conclude that for any $c\in(0,c^*)$ there exist constants
$\nu$, $K>0$ such that for any $x\in[0,ct]$ and $t>0$,
\begin{align*}
 & u^*-q_{c^*}(x+c^*t+\xi(t))\leqslant u^*-q_{c^*}(c^*t+\xi(t))\leqslant K e^{-\nu(c^*t+\xi(t))},\\
 & q_{c^*}(-x+c^*t+\xi(t))\geqslant q_{c^*}((c^*-c)t+\xi(t))\geqslant u^*-K e^{-\nu[(c^*-c)t+\xi(t)]}.
\end{align*}
Based on above results, we can find $T_3>T_1+T_2$ large such that for $t>T_3$ and $x\in[0,ct]$,
\begin{align*}
u(t,x)&\geqslant q_{c^*}(x+c^*(t-T_2)+\xi(t-T_2))+q_{c^*}(-x+c^*(t-T_2)+\xi(t-T_2))-u^*-p_0e^{\beta (t-T_2)}\\
 & \geqslant u^* -2K e^{-\nu\big[(c^*-c)(t-T_2)+\xi(t-T_2)\big]}-p_0e^{\beta (t-T_2)} \geqslant u^*-M u^*e^{-\beta^* t},
\end{align*}
where $M>0$ is sufficiently large and $\beta^*:=\frac{1}{2}\min\big\{\nu(c^*-c),\ \beta,\  d -f'(u^*)\big\}$.
The case where $x\in[-ct,0]$ can be proved by a similar argument as above. The proof of (ii) is now complete.

\smallskip

(iii)\ Thanks to the choice of the initial data, we know that for any given $\beta^*>0$ and $M>0$,
\[
u(t,x) \leqslant u^*+ Mu^* e^{-\beta^* t}\ \ \ \mbox{ for }\ (t,x)\in[0,\infty)\times[g(t), h(t)].
\]
This completes the proof.
\end{proof}

Next we prove the boundedness of $h(t)-c^*t$ and show that $u(t,\cdot) \approx u^*$ in the domain
$[0, h(t)-Z]$, where $Z>0$ is a large number.

\begin{prop}\label{pro:sigma01}
Assume that spreading happens for the solution $(u,g,h)$. Then
\begin{itemize}
\item[(i)] there exists $C>0$ such that
\begin{equation}\label{hghg1}
|h(t)-c^*t |\leqslant C \ \ \mbox{ for all } t\geqslant0 ;
\end{equation}

\item[(ii)] for any small $\varepsilon>0$, there exists $Z_\varepsilon>0$ and $T_\epsilon >0$ such that
\begin{equation}\label{ughu1}
\|u(t,\cdot ) - u^* \|_{L^\infty ([0, h(t) -Z_\varepsilon])} \leqslant u^*\varepsilon \ \ \mbox{ for } t> T_\varepsilon.
\end{equation}
\end{itemize}
\end{prop}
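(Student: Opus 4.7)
The plan is to prove (i) by first noting that the lower bound $h(t) \geqslant c^*t - C$ is already contained in the proof of Lemma~\ref{lem:u-to-1}: the inner subsolution $\underline u$ built there satisfies $\underline h(t) \geqslant c^*t + \tilde C_0$, and the comparison $h(t) \geqslant \underline h(t-\mathrm{shift})$ used at the end of that proof yields the desired lower bound. The main task in (i) is therefore the upper bound $h(t) \leqslant c^*t + C$, for which I will construct a one-sided supersolution and apply Lemma~\ref{lem:comp2}.

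Fix $T_0 \gg 1$ and a fixed left boundary $\bar g := g(T_0)$. For positive constants $K,\delta,\gamma,\varepsilon_0,H_0$ to be tuned, set
\begin{equation*}
\bar h(t) = c^*t + K\bigl(1 - e^{-\delta(t-T_0)}\bigr) + H_0, \qquad \bar u(t,x) = \bigl(1 + \varepsilon_0 e^{-\gamma(t-T_0)}\bigr)\, q_{c^*}\bigl(\bar h(t) - x\bigr),
\end{equation*}
on $t \geqslant T_0 - \tau$ and $x \in [\bar g, \bar h(t)]$. Using the semi-wave identity $q_{c^*}'' = c^* q_{c^*}' + d q_{c^*} - f(q_{c^*}(\cdot - c^*\tau))$, the differential inequality for $\bar u$ reduces to the sum of the nonnegative term $q_{c^*}'(\bar h(t) - x)(\bar h'(t) - c^*) = K\delta e^{-\delta(t-T_0)} q_{c^*}'$, the delayed difference $f(q_{c^*}(\bar h(t) - x - c^*\tau)) - f(q_{c^*}(\bar h(t-\tau) - x))$, and a small negative perturbation $-\varepsilon_0\gamma e^{-\gamma(t-T_0)} q_{c^*}$. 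The key algebraic fact
\begin{equation*}
\bar h(t) - c^*\tau - \bar h(t-\tau) = K e^{-\delta(t-T_0)}\bigl(e^{\delta\tau} - 1\bigr) > 0
\end{equation*}
combined with the monotonicity of $q_{c^*}$ and $f$ (Assumption (H)) forces the delayed difference to be nonnegative; the small perturbation is then absorbed using the sublinearity of $f$ from (H) and its Lipschitz regularity near $u^*$, once $\gamma \leqslant \delta$. The free-boundary condition reads $K\delta e^{-\delta(t-T_0)} \geqslant c^*\varepsilon_0 e^{-\gamma(t-T_0)}$ and is satisfied for $\gamma \leqslant \delta$ and $K\delta \geqslant c^*\varepsilon_0$. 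At $x = \bar g$, combining Lemma~\ref{lem:u-to-1}(iii) ($u(t,\bar g) \leqslant u^*(1 + Me^{-\beta^* t})$) with the exponential convergence $u^* - q_{c^*}(z) = O(e^{-\nu z})$ yields $\bar u(t,\bar g) \geqslant u(t,\bar g)$ provided $\gamma \leqslant \beta^*$ and $\varepsilon_0$ is sufficiently large. Choosing $H_0$ large arranges the initial condition on $[T_0 - \tau, T_0]$, and Lemma~\ref{lem:comp2} then delivers $h(t) \leqslant \bar h(t) \leqslant c^*t + K + H_0$.

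For part (ii), once (i) is in hand the upper estimate $u(t,x) \leqslant u^*(1+\varepsilon)$ on $[0,h(t)]$ for $t$ large is immediate from Lemma~\ref{lem:u-to-1}(iii). For the lower estimate I will reuse the two-front subsolution $\underline u(t,x) = q_{c^*}(x + c^*t + \xi(t)) + q_{c^*}(-x + c^*t + \xi(t)) - u^* - p(t)$ from the proof of Lemma~\ref{lem:u-to-1}, which dominates $u$ after a time shift. For $x \in [0,\, h(t) - Z_\varepsilon]$, part (i) yields $-x + c^*t + \xi(t) \geqslant Z_\varepsilon - C + \xi(\infty)$, so choosing $Z_\varepsilon$ large enough that $q_{c^*}(Z_\varepsilon - C + \xi(\infty)) \geqslant u^*(1 - \varepsilon/4)$ makes both $q_{c^*}$-terms within $u^*\varepsilon/4$ of $u^*$; together with $p(t) \to 0$, this gives $u(t,x) \geqslant \underline u(t,x) \geqslant u^*(1-\varepsilon)$ for $t \geqslant T_\varepsilon$.

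The main obstacle is the verification of the differential inequality for the supersolution in the delayed setting. The exponential shape of $\bar h(t)$ is dictated precisely by the need that $\bar h(t) - c^*\tau \geqslant \bar h(t-\tau)$; any linear choice of $\bar h$ would destroy the sign of the delayed contribution and force a comparison against a larger value of the nonlinearity. Beyond this, the five parameters $K,\delta,\gamma,\varepsilon_0,H_0$ and the exponents $\beta^*$, $\nu c^*$, $\gamma$, $\delta$ must be ordered so that the differential, free-boundary, left-boundary and initial-data inequalities hold simultaneously, which is the main bookkeeping challenge.
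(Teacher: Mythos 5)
Your overall strategy is the same as the paper's: build a one-sided supersolution $(\bar u,\bar h)$ out of the semi-wave with an exponentially perturbed amplitude and an exponentially perturbed front, use the key algebraic fact $\bar h(t)-c^*\tau-\bar h(t-\tau)>0$ to make the delay term helpful, and deduce the lower bound in (i) and the lower estimate in (ii) from (shifted) subsolutions already constructed in the proof of Lemma~\ref{lem:u-to-1}. Two differences are cosmetic (you use the fixed left boundary $g(T_0)$ where the paper uses the moving $g(t)$, which lets the paper get $\bar u>0=u$ on the left for free; and you recycle the two-front subsolution for (ii), whereas the paper builds a fresh one-sided subsolution in its Step~2). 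Both variants work. However, there are two concrete errors in your write-up that would make the proof fail as stated and need to be repaired before the rest of the bookkeeping can go through.

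First, your supersolution $\bar u(t,x)=(1+\varepsilon_0 e^{-\gamma(t-T_0)})q_{c^*}(\bar h(t)-x)$ exceeds $u^*$ for $x$ well to the left of $\bar h(t)$, because $q_{c^*}\to u^*$. This violates the standing hypothesis $\bar u\leqslant u^*$ of Lemmas~\ref{lem:comp1} and \ref{lem:comp2}, so the comparison principle cannot be invoked. Moreover, \textbf{(H)} gives monotonicity of $f$ only on $[0,u^*]$, so the argument ``$f$ increasing $\Rightarrow$ delayed difference nonnegative'' collapses in the region where $\bar u(t-\tau,\cdot)>u^*$. The paper fixes both issues in one stroke by truncating: $\bar u=\min\{(1+M'e^{-\beta^* t})q_{c^*}(\bar h(t)-\cdot),\,u^*\}$, checking $\mathcal{N}u^*=0$ on the flat part and the differential inequality on the part where the raw expression is below $u^*$. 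You need the same truncation.

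Second, your free-boundary inequality has the sign of the exponent relation backwards. The condition $\bar h'(t)\geqslant -\mu\bar u_x(t,\bar h(t))$ reads $K\delta e^{-\delta(t-T_0)}\geqslant c^*\varepsilon_0 e^{-\gamma(t-T_0)}$, equivalently $K\delta/(c^*\varepsilon_0)\geqslant e^{(\delta-\gamma)(t-T_0)}$ for $t\geqslant T_0$. When $\gamma<\delta$ the right-hand side blows up and the inequality eventually fails; you need $\gamma\geqslant\delta$, and the natural (and paper's) choice is $\gamma=\delta$, paired with $K\gamma\geqslant c^*\varepsilon_0$. Once you set $\gamma=\delta$, your remaining ordering constraints $\gamma\leqslant\beta^*$ (for the left-boundary comparison against Lemma~\ref{lem:u-to-1}(iii)) and $\gamma<\nu c^*$ (for the exponential-convergence absorption at $x=\bar g$) are compatible, and the interior inequality $d-f'(\cdot)e^{\gamma\tau}-\gamma>0$ near $u^*$ is available for $\gamma$ small, exactly as in the paper's condition \eqref{vu1}. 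With these two corrections your argument matches the paper's Step~1, and your (ii) then closes as claimed.
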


\begin{proof}
In order to prove conclusions in this proposition, inspired by \cite{DMZ}, we will use the semi-wave $q_{c^*}$ to construct
the suitable sub- and supersolution. Compared with \cite{DMZ}, our problem deal with the case where $\tau>0$. Due to $\tau>0$,
there will be some space-translation of the semi-wave $q_{c^*}$, which make our problem difficult to deal with. To
overcome this difficulty, we mainly use the the monotonicity and exponentially convergent of $q_{c^*}$. Moreover, this idea also
be used in Lemma \ref{limn21}. For clarity we divide the proof into several steps.

\smallskip

$Step\ 1$. To give some upper bounds for $h(t)$ and $u(t,x)$.

Fix $c\in(0,c^*)$. It follows from Lemma \ref{lem:u-to-1} that there exist $\beta^*\in(0,  d -f'(u^*))$, $M >0$, and $T> 0$
such that for $t \geqslant T$, (i), (ii) and (iii) in Lemma \ref{lem:u-to-1} hold. Thanks to {\bf (H)}, by shrinking $\beta^*$
if necessary, we can find $\rho>0$ small such that
\begin{equation}\label{vu1}
 d -f'(v)e^{\beta^* \tau}\geqslant \beta^*\ \ \ \mbox{ for } v\in[u^*-\rho,u^*+\rho].
\end{equation}

For any  $T_*>T+\tau$ large satisfying $Mu^* e^{-\beta^* (T_*-\tau)}<\frac{\rho}{2}$, there is $M' > M$ such that
$M'u^*e^{-\beta^* (T_*-\tau)}< \rho$. Since $q_{c^*}(z)\to u^*$ as $z\to\infty$, we can find $Z_0 >0$ such that
\begin{equation}\label{U1a1}
\big(1+M'e^{-\beta^* (T_*+\tau)  }\big)q_{c^*}(Z_0 )\geqslant u^*.
\end{equation}

Now we construct a supersolution $(\bar{u} ,g, \bar{h})$ to \eqref{p} as follows:
\begin{align*}
 & \bar{h} (t): =c^*(t - T_*)+ h (T_*+\tau )+ K M'\big(e^{-\beta^* T_* }-e^{-\beta^* t}\big)+Z_0\ \ \
\mbox{ for }\ t\geqslant T_* ,\\
 & \bar{u}(t,x):=\min\big\{\big(1+M'e^{-\beta^* t}\big)q_{c^*}\big(\bar{h} (t)-x\big),\ u^*\big\}\ \ \ \mbox{ for }\ t\geqslant T_* ,\ x\leqslant \bar{h} (t),
\end{align*}
where $K$ is a positive constant to be determined below.

Clearly, for all $t\geqslant T_*$, $\bar{u} (t, g(t))>0= u(t, g(t))$, $\bar{u}\big(t, \bar{h} (t)\big)=0$, and
\begin{eqnarray*}
-\mu \bar{u} _x(t,\bar{h}(t))& = & \mu \big(1+M'e^{-\beta^* t}\big)q_{c^*}'(0)=\big(1+M'e^{-\beta^* t}\big)c^*, \\
& < & c^*+M' K \beta^* e^{-\beta^* t} = \bar{h}'(t),
\end{eqnarray*}
if we choose $K$ with $K\beta^* > c^*$.  By the definition of $\bar{h}$ we have $h (T_*+s )<\bar{h}(T_* +s)$ for $s\in[0,\tau]$. It then follows
from  \eqref{U1a1} that for $(s,x)\in[0,\tau]\times[ g (T_*+s ),h (T_* +s)]$,
\[
\big(1+M'e^{-\beta^* (T_*+s) }\big)q_{c^*}\big(\bar{h} (T_*+s)-x\big) \geqslant\big(1+M'e^{-\beta^* (T_*+\tau) }\big)q_{c^*}(Z_0)\geqslant u^*,
\]
which yields that $\bar{u}(T_*+s,x)=u^*\geqslant u(T_*+s,x)$ for $(s,x)\in[0,\tau]\times[g(T_*+s),h(T_*+s)]$.

We now show that
\begin{equation}\label{u+ upper}
 \mathcal{N} [\bar{u}] := \bar{u}_t - \bar{u}_{xx} + d  \bar{u}-f(\bar{u}(t-\tau,x)) \geqslant 0,\quad x\in [g(t), \bar{h}(t)],\ t> T_*+\tau .
\end{equation}
Thanks to the definition of $\bar{u}(t,x)$ and the monotonicity of $q_{c^*}(z)$ in $z$, we can find a decreasing function
$\eta(t)<\bar{h}(t)$ for $t>T_*$, such that
\[
\big(1+M'e^{-\beta^* t}\big)q_{c^*}\big(\bar{h}(t)-x\big)\left\{\begin{array}{ll} > u^*, &  x<\eta(t),\\
= u^*, &  x=\eta(t),\\
< u^*, & x\in\big(\eta(t),\bar{h}(t)\big],
\end{array}
\right.
\]
which implies that
\[
\bar{u}(t,x)=u^*\ \mbox{ for } x \leqslant \eta(t), \ \mbox{ and }\  \bar{u}(t,x)=\big(1+M'e^{-\beta^* t}\big)q_{c^*}\big(\bar{h}(t)-x\big)\ \mbox{ for } x\in\big[\eta(t),\bar{h}(t)\big].
\]
As $\mathcal{N} u^*=0$, thus in what follows, we only consider the case $ x\in\big[\eta(t),\bar{h}(t)\big]$. Set $q_\tau:=q_{c^*}\big(\bar{h}_\tau-x\big)$ for convenience.
A direct calculation shows that, for $t>T_*+\tau$,
\begin{align*}
\mathcal{N} [\bar{u}] :& = \bar{u}_t - \bar{u}_{xx} + d  \bar{u}-f(\bar{u}(t-\tau,x))\\
 & = -\beta^* M'e^{-\beta^* t} q_{c^*}+\big(1+M'e^{-\beta^* t}\big) \{K\beta^* M'e^{-\beta^* t} q_{c^*}'+f(q_{\tau})\} -f\big((1+M'e^{-\beta^* (t-\tau)})q_{\tau}\big)\\
 & = M'e^{-\beta^* t}\Big\{f(q_{\tau})+K \beta^*\big(1+M'e^{-\beta^* t}\big)q_{c^*}' -\beta^*q_{c^*} \Big\} + f(q_\tau)- f\big((1+M'e^{-\beta^* (t-\tau)})q_\tau\big)\\
 & \geqslant M'e^{-\beta^* t}\Big\{K \beta^* \big(1+M'e^{-\beta^* t}\big)q_{c^*}'-\big[\big(f'\big((1+ \theta M'e^{-\beta^* (t-\tau)})q_\tau\big)e^{\beta^* \tau}- d  \big)q_\tau-\beta^*q_{c^*}\big]\Big\},
\end{align*}
for some $\theta \in (0,1)$. Since
\begin{equation}\label{qcto1}
q_{c^*}(z)\to u^*\ \mbox{ and } \frac{(q_{c^*}(z)-u^*)'}{q_{c^*}(z)-u^*}\to k^*\ \ \mbox{ as } z\to \infty
\end{equation}
where $k^*:=c^*-\sqrt{(c^*)^2+4( d -f'(u^*))}<0$, there are $z_0>0$  and $k_1>0$ such that
\begin{equation}\label{qqq1}
q_{c^*}''(z)<0,\ \ \ q_{c^*}(z)\geqslant u^*-\rho\ \ \mbox{ and }\ \ q_{c^*}'(z-2c^*\tau)
\leqslant  k_1 q_{c^*}'(z)  \ \mbox{ for } \ z>z_0,
\end{equation}
Moreover, we can compute that
\begin{align*}
\triangle \bar{h}(t) := \bar{h}(t)-\bar{h}_\tau(t)= c^*\tau+KM'e^{-\beta^*t}(e^{\beta^*\tau}-1).
\end{align*}
For any given $K>0$, by enlarging $T_*$ if necessary,  we have that
\begin{equation}\label{deh}
\triangle \bar{h}(t)\in[c^*\tau,2c^*\tau]\ \ \mbox{ for }\ t \geqslant T_*.
\end{equation}

When $\bar{h}_\tau-x>z_0$ and $t> T_*+\tau$, it then follows that
\begin{align*}
\mathcal{B}  :& = K \beta^* \big(1+M'e^{-\beta^* t}\big)q_{c^*}'-\big[\big(f'\big((1+ \theta M'e^{-\beta^* (t-\tau)})q_\tau\big)e^{\beta^* \tau}- d  \big)q_\tau-\beta^*q_{c^*}\big]\\
 & \geqslant \big[ d  -f'\big(\big(1+ \theta M'e^{-\beta^* (t-\tau)}\big)q_\tau\big)e^{\beta^* \tau}-\beta^*\big]q_\tau+
K\beta^* q_{c^*}'+\beta^* (q_\tau-q_{c^*})\\
& \geqslant K\beta^* q_{c^*}'(\bar{h}(t)-x)-\beta^* q_{c^*}'(\bar{h}(t)-x-\tilde{\theta}\triangle\bar{h}(t))\triangle\bar{h}(t)\ \ \ (\mbox{with } \tilde{\theta}\in(0,1))\\
& \geqslant (K-2k_1c^*\tau) \beta^*q_{c^*}'(\bar{h}(t)-x)\geqslant 0
\end{align*}
provided that $K$ is sufficiently large, and we have used $M'e^{-\beta^* (t-\tau)} u^*\leqslant\rho$ for $t> T_* $,  $q_{c^*}'(z)>0$ for
$z>0$, \eqref{vu1}, \eqref{qqq1} and \eqref{deh}. Thus $\mathcal{N} [\bar{u}]\geqslant 0$ in this case.

When  $0\leqslant \bar{h}_\tau-x\leqslant z_0$ and $t> T_*+\tau $, for sufficiently large $K$,  we have
$$
\mathcal{N}[\bar{u}] \geqslant  M'e^{-\beta^* t}\big[K \beta^* D_1 - D_2u^* e^{\beta^* \tau}-\beta^*u^*\big]\geqslant 0,
$$
where $D_1:=\min_{z\in[0,z_0+2c^*\tau]}q_{c^*}'(z)>0$, $D_2:=\max_{v\in[0,2u^*]}f'( v)$, and \eqref{deh} are used.

Summarizing the above results we see that $(\bar{u}, g, \bar{h})$ is a supersolution of \eqref{p}.  Thus we can apply the comparison
principle to deduce
$$
h(t) \leqslant \bar{h}(t) \quad \mbox{and} \quad  u(t,x)\leqslant \bar{u}(t,x) \leqslant u^*+M'
u^*e^{-\beta^* t}\quad \mbox{ for } x\in [g(t), h(t)],\ t>T_*.
$$
By the definition of $\bar{h}$ we see that, for $C_r := h(T_*+\tau)+Z_0 +KM'$, we have
\begin{equation}\label{hbd}
h(t)< c^*t +C_r\ \ \ \mbox{ for all } t\geqslant 0.
\end{equation}
For any $\varepsilon>0$, if we choose $T_1(\varepsilon) >T_*$ large such that
$M' e^{-\beta^* T_1(\varepsilon)} < \varepsilon$, then we have
\begin{equation}\label{v<1+epsilon/P}
u(t,x)\leqslant \bar{u}(t,x) \leqslant u^*(1 +\varepsilon) ,\quad x\in [g(t), h(t)],\ t> T_1(\varepsilon),
\end{equation}
which ends the proof of Step 1.

\smallskip

$Step\ 2$. To give some lower bounds for $h(t)$ and $u(t,x)$.

Let $c$, $M$, $T$ and  $\beta^*$ be as before. By shrinking $c$ if necessary, we can find $T^*>T+\tau$ large such that
\begin{equation}\label{vu2}
M u^* e^{-\beta^* (t-\tau)}\leqslant \frac{\rho}{2}\ \ \ \ \mbox{ for }\ t\geqslant T^*\ \mbox{ and }\ \ \ h(T^*)-cT^*\geqslant c^*\tau.
\end{equation}
We will define the following functions
\begin{align*}
 & \underline{g}(t)=ct,\ \ \underline{h}(t)=c^*(t-T^*)+cT^*-\sigma M(e^{-\beta^*T^*}-e^{-\beta^*t}),\ \ \ t\geqslant T^*,\\
 & \underline{u}(t,x)=\big(1-Me^{-\beta^* t}\big)q_{c^*}(\underline{h}(t)-x),\ \ \ t\geqslant T^*,\ \ x\in[\underline{g}(t),\underline{h}(t)],
\end{align*}
where $\sigma$ is a positive constant to be determined later.

We will prove that $(\underline{u},\underline{g},\underline{h})$ is a subsolution to \eqref{p} for $t>T^*$. Firstly, for $t\geqslant T^*$,
\[
\underline{u}\big(t,\underline{g}(t)\big)=\underline{u}(t,-ct)\leqslant u^*-M u^* e^{-\beta^* t}\leqslant u(t,-ct)=u\big(t,\underline{g}(t)\big).
\]
Next, we check that $\underline{h}$ and $\underline{u}$ satisfy the required conditions at $x=\underline{h}(t)$. It is obvious that
$\underline{u}(t,\underline{h}(t))=0$. If we choose $\sigma$ with $\sigma\beta^*\geqslant c^*$, then
\begin{eqnarray*}
-\mu \underline{u}_x(t,\underline{h}(t))& = & \mu\big(1-Me^{-\beta^* t}\big)q_{c^*}'(0)=c^*\big(1-Me^{-\beta^* t}\big), \\
& > & c^*-\sigma M \beta^* e^{-\beta^* t} = \underline{h}'(t).
\end{eqnarray*}

Later, let us check the initial conditions. From Lemma \ref{lem:u-to-1}, it is easy to see that
\begin{align*}
&\underline{h}(T^*+s)\leqslant cT^*+c^*\tau \leqslant h(T^*+s),\\
&\underline{u}(T^*+s,x)\leqslant u^*\big(1-Me^{-\beta^* (T^*+s)}\big)\leqslant u(T^*+s,x),
\end{align*}
for $s\in[0,\tau]$ and $x\in [\underline{g}(T^*+s),\underline{h}(T^*+s)]$.

Finally we will prove that $\underline{u}_t-\underline{u}_{xx}+ d \underline{u}-f(\underline{u}(t-\tau,x))\leqslant 0$
for $t\geqslant T^*+\tau$. Put  $z=\underline{h}(t)-x$ and $q_\tau=q_{c^*}(\underline{h}(t-\tau)-x)$.
It is easy to check that
\begin{align*}
\mathcal{N}[\underline{u}]:&=\underline{u}_t-\underline{u}_{xx}+ d  \underline{u}-f(\underline{u}(t-\tau,x))\\
&\leqslant M e^{-\beta^* t}\Big\{\beta^*q_{c^*}-\sigma \beta^*\big(1-M e^{-\beta^* t}\big)q_{c^*}'+\big[f'\big(\big(1-\theta_1M e^{-\beta^* (t-\tau)}\big)q_\tau\big)e^{\beta^*\tau}- d \big]q_\tau\Big\}.
\end{align*}
for some $\theta_1\in(0,1)$. It follows from \eqref{qcto1} that
there are two constants $z_1>0$, $k_2>0$ such that
\begin{equation}\label{qqq12}
q_{c^*}''(z)<0,\ \ \ q_{c^*}(z)\geqslant u^*-\frac{\rho}{2}\ \ \mbox{ and }\ \ q_{c^*}'(z-c^*\tau)
\leqslant  k_2 q_{c^*}'(z)  \ \mbox{ for } \ z>z_1,
\end{equation}

Moreover, we can compute that
\begin{align*}
\triangle \underline{h}(t) : = \underline{h}(t)-\underline{h}_\tau(t) = c^*\tau-\sigma M e^{-\beta^*t}(e^{\beta^*\tau}-1).
\end{align*}
For any given $\sigma>0$, by enlarging $T^*$ if necessary,  we have that
\begin{equation}\label{deh2}
\triangle \underline{h}(t)\in[0,c^*\tau]\ \ \mbox{ for }\ t \geqslant T^*.
\end{equation}
When $\underline{h}_\tau-x>z_1$ and $t\geqslant T^*+\tau$, it then follows that
\begin{align*}
\mathcal{C}  :& = \beta^*q_{c^*}-\sigma \beta^*\big(1-M e^{-\beta^* t}\big)q_{c^*}'+\big[f'\big(\big(1-\theta_1M e^{-\beta^* (t-\tau)}\big)q_\tau\big)e^{\beta^*\tau}- d \big]q_\tau\\
 & \leqslant \big[f'\big(\big(1-\theta_1M e^{-\beta^* (t-\tau)}\big)q_\tau\big)e^{\beta^*\tau}- d +\beta^*\big]q_\tau
-\sigma\beta^* q_{c^*}'+\beta^*(q_{c^*}-q_{\tau})\\
& \leqslant -\sigma\beta^* q_{c^*}'(\underline{h}(t)-x)+\beta^* q_{c^*}'(\underline{h}(t)-x-\tilde{\theta}_1\triangle \underline{h}(t))\triangle \underline{h}(t)
\ \ \ (\mbox{with } \tilde{\theta}_1\in(0,1))\\
& \leqslant (k_2c^*\tau-\sigma) \beta^*q_{c^*}'(\underline{h}(t)-x)\leqslant 0
\end{align*}
provided that $\sigma$ is sufficiently large, and we have used $\big(1-\theta_1 M e^{-\beta^* (t-\tau)}\big)q_\tau\in[u^*-\rho,u^*]$ and
\eqref{vu2} for $t\geqslant T^*$,  and \eqref{vu1}, \eqref{qqq12}, \eqref{deh2}. Thus $\mathcal{N} [\underline{u}]\leqslant 0$ in this case.

When  $0\leqslant \underline{h}_\tau-x\leqslant z_1$ and $t\geqslant T^*+\tau $, for
sufficiently large $\sigma$, we have
$$
\mathcal{N} [\underline{u}] \leqslant M e^{-\beta^*t} \Big[\beta^*u^*-\sigma \beta^* \Big(1-\frac{\rho}{2u^*} e^{-\beta^*\tau}\Big)D'_1 + D'_2u^* e^{\beta^* \tau}\Big]\leqslant 0,
$$
where $D'_1:=\min_{z\in[0,z_1+c^*\tau]}q_{c^*}'(z)>0$, $D'_2:=\max_{v\in[0,2u^*]}f'( v)$ and \eqref{deh2} are used.

Consequently, $(\underline{u},\underline{g}, \underline{h})$ is a subsolution to \eqref{p}, then the comparison principle implies that
\[
\underline{h}(t)\leqslant h(t),\ \ \underline{u}(t,x)\leqslant u(t,x)\ \ \mbox{ for }\ t\geqslant T^*,\ x\in[\underline{g}(t), \underline{h}(t)],
\]
which yields that
\begin{equation}\label{hbd2}
h(t)\geqslant \underline{h}(t) - \max_{t\in[0,T^*]}|h(t)-\underline{h}(t)| \geqslant c^*t -C_l \ \ \mbox{ for all } t\geqslant 0,
\end{equation}
where $C_l = \max_{t\in[0,T^*]}|h(t)-\underline{h}(t)|+c^*T^* +\sigma M$.  Combining with \eqref{hbd} we obtain \eqref{hghg1}.

On the other hand, for any $\varepsilon>0$, since $q_{c^*}(\infty) =u^*$, there exists $Z_1(\varepsilon)>0$ such that
$$
q_{c^*}(z)> u^*\Big(1- \frac{\varepsilon}{2}\Big)\ \  \mbox{  for }  z\geqslant Z_1(\varepsilon).
$$
It follows from \eqref{hbd2} and \eqref{hbd} that
$$
\underline{h}(t) -x \geqslant c^*t -C_l -x \geqslant h(t) - C_r -C_l  -x \geqslant Z_1(\varepsilon)\ \mbox{ for }\ t>T^*,
$$
which yields that for $(t,x)\in \Phi_1 := \{ (t,x) : ct\leqslant x\leqslant h(t) -C_r -C_l -Z_1(\varepsilon),\ t>T^*\}$,
$$
u(t,x) \geqslant \underline{u} (t,x) \geqslant \big(1-M e^{-\beta^* t} \big) q_{c^*}\big(Z_1(\varepsilon)\big) \geqslant
u^*\big(1-M e^{-\beta^* t} \big) \Big( 1- \frac{\varepsilon}{2} \Big).
$$
Moreover, if we choose $T_2(\epsilon) >T^*$ such that $2 M e^{-\beta^* T_2(\varepsilon)} <\varepsilon$, then
\begin{equation}\label{v>1-epsilonP}
u(t,x)\geqslant u^*\Big( 1- \frac{\varepsilon}{2}\Big)^2 > u^*(1- \varepsilon)\ \  \mbox{ for }
(t,x)\in \Phi_1\  \mbox{ and }\ t>T_2(\varepsilon),
\end{equation}
which completes the proof of Step 2.

\smallskip

$ Step\ 3$. Completion of the proof of \eqref{ughu1}.
Denote $T_\varepsilon :=T_1(\varepsilon)+T_2(\varepsilon)$ and $Z_\varepsilon := C_r + C_l +Z_1(\varepsilon)$, then
by \eqref{v<1+epsilon/P} and \eqref{v>1-epsilonP} we have
$$
|u(t,x)-u^*| \leqslant u^*\varepsilon\ \  \mbox{ for } 0\leqslant x\leqslant h(t) -Z_\varepsilon,\ t>T_\varepsilon.
$$
This yields the estimate in \eqref{ughu1}, which completes the proof of this proposition.
\end{proof}

Using a similar argument as above we can obtain the following result.
\begin{prop}\label{pro:sigma12}
Assume that spreading happens for the solution $(u,g,h)$. Then
\begin{itemize}
\item[(i)] there exists $C'>0$ such that
\begin{equation}\label{hhgg1}
|g(t)+c^*t |\leqslant C' \ \ \mbox{for all } t\geqslant0 ;
\end{equation}

\item[(ii)] for any small $\varepsilon>0$, there exists $Z'_\varepsilon>0$ and $T'_\epsilon >0$ such that
\begin{equation}\label{uugghh1}
\|u(t,\cdot ) - u^* \|_{L^\infty ([g(t)+Z'_\varepsilon,0])} \leqslant u^*\varepsilon \ \ \mbox{ for } t> T'_\varepsilon.
\end{equation}
\end{itemize}
\end{prop}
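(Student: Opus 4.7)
The plan is to exploit the reflection symmetry of problem \eqref{p} under $x\mapsto -x$ to reduce Proposition \ref{pro:sigma12} to Proposition \ref{pro:sigma01}, which has already been proved. Specifically, I would define
\[
\tilde u(t,x):=u(t,-x),\quad \tilde g(t):=-h(t),\quad \tilde h(t):=-g(t),\quad \tilde\phi(\theta,x):=\phi(\theta,-x).
\]
A direct check shows that the reaction-diffusion equation $u_t=u_{xx}-du+f(u(t-\tau,\cdot))$ is invariant under $x\mapsto -x$, and that the Stefan-type free boundary conditions transform correctly: if $h'(t)=-\mu u_x(t,h(t))$ then $\tilde g'(t)=-h'(t)=\mu u_x(t,-\tilde g(t))=-\mu\tilde u_x(t,\tilde g(t))$, and similarly for the right boundary. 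Also the compatible condition \eqref{CC} is preserved, so $(\tilde u,\tilde g,\tilde h)$ is the unique solution of \eqref{p} with initial data $(\tilde\phi,\tilde g(0),\tilde h(0))$. Spreading happens for $(\tilde u,\tilde g,\tilde h)$ iff it does for $(u,g,h)$, since $g_\infty=-\tilde h_\infty$ and $h_\infty=-\tilde g_\infty$.

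Next, I would apply Proposition \ref{pro:sigma01} to $(\tilde u,\tilde g,\tilde h)$. Part (i) gives a constant $C>0$ with $|\tilde h(t)-c^*t|\leqslant C$, which is exactly $|g(t)+c^*t|\leqslant C$, proving \eqref{hhgg1} with $C'=C$. Part (ii) provides, for each small $\varepsilon>0$, constants $Z_\varepsilon>0$ and $T_\varepsilon>0$ such that $\|\tilde u(t,\cdot)-u^*\|_{L^\infty([0,\tilde h(t)-Z_\varepsilon])}\leqslant u^*\varepsilon$ for $t>T_\varepsilon$. Undoing the change of variable $y=-x$, the interval $[0,\tilde h(t)-Z_\varepsilon]=[0,-g(t)-Z_\varepsilon]$ becomes $[g(t)+Z_\varepsilon,0]$, and we obtain $\|u(t,\cdot)-u^*\|_{L^\infty([g(t)+Z'_\varepsilon,0])}\leqslant u^*\varepsilon$ with $Z'_\varepsilon:=Z_\varepsilon$ and $T'_\varepsilon:=T_\varepsilon$, which is \eqref{uugghh1}.

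There is no genuine obstacle, since the entire argument of Proposition \ref{pro:sigma01} is intrinsic to the problem and uses only its spatial invariance together with the semi-wave $q_{c^*}$. If one preferred to avoid the reflection trick, one could mirror the super/subsolution construction step by step: for the upper bound on $-g(t)$, use
\[
\bar g(t):=-c^*(t-T_*)+g(T_*+\tau)-KM'(e^{-\beta^*T_*}-e^{-\beta^*t})-Z_0,\quad \bar u(t,x):=\min\bigl\{(1+M'e^{-\beta^*t})q_{c^*}(x-\bar g(t)),\,u^*\bigr\},
\]
and for the lower bound use the analogous translate $\underline g(t)=-c^*(t-T^*)-cT^*+\sigma M(e^{-\beta^*T^*}-e^{-\beta^*t})$ with $\underline u(t,x)=(1-Me^{-\beta^*t})q_{c^*}(x-\underline g(t))$, repeating the same verifications of the parabolic inequality and the Stefan condition that were carried out in Steps 1--2 of the proof of Proposition \ref{pro:sigma01}. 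The reflection route is cleaner, so I would present that one and note the alternative only in passing.
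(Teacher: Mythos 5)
Your proposal is correct and captures exactly what the paper has in mind when it says ``Using a similar argument as above we can obtain the following result'' for Proposition~\ref{pro:sigma12}: it is just the left/right symmetry of the problem. Your reflection map $\tilde u(t,x)=u(t,-x)$, $\tilde g=-h$, $\tilde h=-g$ is well chosen --- the PDE has no first-order spatial term, the Stefan conditions and the compatible condition \eqref{CC} transform correctly, and the spreading hypothesis and the constants $c^*$, $u^*$ are all invariant under the reflection --- so Proposition~\ref{pro:sigma01} applied to $(\tilde u,\tilde g,\tilde h)$ yields \eqref{hhgg1} and \eqref{uugghh1} immediately; this is a clean way of making the paper's ``similar argument'' precise, and your remark that one could instead re-run the super/subsolution constructions of Steps 1--2 with mirrored translates is an accurate account of what the paper intends.
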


\subsection{Asymptotic profiles of the spreading solutions}\label{sub52}
This subsection is devoted to the proof of Theorem \ref{thm:profile of spreading sol}. We will prove this theorem by a
series of results. Firstly, it follows from Proposition \ref{pro:sigma01} that there exist positive constant $C$ such that
\[
-C\leqslant h(t)-c^*t\leqslant C\ \ \mbox{ for } t\geqslant 0.
\]
Let us use the moving coordinate $y :=x-c^*t+2C$ and set
$$
\begin{array}{l}
h_1(t):= h(t)-c^*t+2C, \quad g_1(t) :=g(t)-c^*t+2C\ \  \mbox{ for } t\geqslant 0,\\
\mbox{and }  u_1(t,y):= u (t, y+c^*t-2C)\ \  \mbox{ for } y\in[g_1(t),h_1(t)],\ t\geqslant 0.
\end{array}
$$
Then $({u_1},{g_1},{h_1})$ solves
\begin{equation}\label{pWH}
\left\{
\begin{array}{ll}
 (u_1)_t =(u_1)_{yy}+c^*(u_1)_y- d  u_1+ f(u_1(t-\tau,y+c^*\tau)), &  {g_1}(t)<y<{h_1}(t),\ t>0,\\
 {u_1}(t, y)= 0,\ {g'_1}(t)=-\mu (u_1)_y(t,y)-c^*, &  y={g_1}(t),\ t>0,\\
 {u_1}(t, y)= 0,\ {h'_1}(t)=-\mu (u_1)_y(t,y)-c^*, &  y={h_1}(t),\ t>0.
\end{array}
\right.
\end{equation}
Let $t_n\to\infty$ be an arbitrary sequence satisfying $t_n>\tau$ for $n\geqslant1$. Define
\[
v_n(t,y)=u_1(t+t_n,y),\ \ H_n(t)=h_1(t+t_n), \ \ k_n(t)=g_1(t+t_n).
\]
\begin{lem}\label{limn1}
Subject to a subsequence,
\begin{equation}\label{vhgt1}
H_n(t)\to H\ \  in \ C^{1+\frac{\nu}{2}}_{loc}(\R)\ \ \ and \ \ \ \|v_n-V\|_{C^{\frac{1+\nu}{2},1+ \nu}_{loc}(\Omega_n)}\to 0,
\end{equation}
where $\nu\in(0,1)$, $\Omega_n=\{(t,y)\in\Omega: \ y\leqslant H_n(t)\}$, $\Omega=\{(t,y): \ -\infty<y\leqslant H(t), t\in\R\}$,
and $(V(t,y),H(t))$ satisfies
\begin{equation}\label{VGHQ}
\left\{
\begin{array}{ll}
 V_t =V_{yy}+c^*V_{y}- d  V+ f(V(t-\tau,y+c^*\tau)), &   (t,y)\in\Omega,\\
 V (t, H(t))= 0,\ H'(t)=-\mu V_{y} (t,H(t))-c^*, & t\in\R.
\end{array}
\right.
\end{equation}
\end{lem}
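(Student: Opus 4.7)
The plan is to derive uniform estimates on the shifted triple $(v_n, k_n, H_n)$, extract a convergent subsequence by a standard compactness/diagonal argument, and then pass to the limit in \eqref{pWH}. First, I would observe that the moving-frame quantities are already essentially bounded. By Proposition~\ref{pro:sigma01}, $h_1(t) = h(t)-c^*t+2C \in [C,3C]$, so $H_n(t)$ is uniformly bounded on $\mathbb R$. By Proposition~\ref{pro:sigma12}, $g_1(t)=g(t)-c^*t+2C \leqslant -2c^*t + 2C+C'$, so for every fixed $t\in\mathbb R$ one has $k_n(t) = g_1(t+t_n) \to -\infty$ as $n\to\infty$. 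Hence the limiting domain, if it exists, is of the form $\{(t,y): y\leqslant H(t),\, t\in\mathbb R\}$.

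The next task is a uniform bound on $|H_n'(t)|$. Since $H_n'(t)=h_1'(t+t_n)=h'(t+t_n)-c^*$, it suffices to bound $h'(t)$ uniformly for large $t$. This follows from the standard barrier argument used in the Stefan-problem literature and already invoked in the proof of Lemma~\ref{lem:global}: the auxiliary parabola $\bar u(t,x)=u^*[2M(h(t)-x)-M^2(h(t)-x)^2]$ placed just inside the right free boundary, together with $0\leqslant u\leqslant u^*+o(1)$ from Proposition~\ref{pro:sigma01}, yields an upper bound on $|u_x(t,h(t))|$ independent of $t$, hence a uniform bound on $|h_1'|$ and thus on $|H_n'|$.

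Once $H_n$ is uniformly $C^1$-bounded, the usual flattening-of-the-free-boundary change of variables turns \eqref{pWH} into a uniformly parabolic problem on a fixed strip with coefficients uniformly bounded in $C^{\nu/2,\nu}$. Interior $L^p$ estimates followed by Sobolev embedding, together with Schauder boundary estimates up to the (now fixed) right boundary, give a uniform bound
\[
\|v_n\|_{C^{(1+\nu)/2,1+\nu}(\Omega_n\cap K)} + \|H_n\|_{C^{1+\nu/2}(I)} \leqslant C_K
\]
for every compact $K\subset\overline\Omega$ and compact time interval $I$. Arzelà--Ascoli on an exhausting sequence of compact sets and a Cantor diagonal selection then produce a subsequence along which $H_n\to H$ in $C^{1+\nu/2}_{\mathrm{loc}}(\mathbb R)$ and $v_n\to V$ in $C^{(1+\nu)/2,1+\nu}_{\mathrm{loc}}$ on the corresponding moving domains; that $k_n\to -\infty$ locally uniformly ensures the limit is defined on all of $\Omega$.

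Finally I would pass to the limit in \eqref{pWH} to obtain \eqref{VGHQ}. The interior equation and the Dirichlet/Stefan boundary conditions pass in the limit by the $C^{(1+\nu)/2,1+\nu}$ convergence of $v_n$ and $C^{1+\nu/2}$ convergence of $H_n$. The only subtle point, and the main obstacle I expect, is the delay term $f(v_n(t-\tau,y+c^*\tau))$: on each compact set $K\subset\overline\Omega$ the shifted argument $(t-\tau,y+c^*\tau)$ still lies in a compact subset of $\overline\Omega$ once $n$ is large (since $k_n(t-\tau)\to-\infty$ and $H_n(t-\tau)\to H(t-\tau)$), so locally uniform convergence of $v_n$ together with continuity of $f$ suffices to identify the limit as $f(V(t-\tau,y+c^*\tau))$. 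This yields \eqref{VGHQ} and completes the proof.
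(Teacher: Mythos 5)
Your proposal matches the paper's proof in all essentials: uniform $C^1$ bound on $H_n$ from the barrier argument in Lemma~\ref{lem:global}, the flattening change of variables (the paper uses $z=y/H_n(t)$) to a fixed strip, uniform parabolic $L^p$/Sobolev/Hölder estimates, diagonal extraction, and passage to the limit. Your explicit remark that the shifted delay argument $(t-\tau,y+c^*\tau)$ stays in a fixed compact for $n$ large (because $k_n\to-\infty$ locally uniformly) correctly addresses a point the paper leaves implicit, but the overall route is the same.
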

\begin{proof}
It follows from the proof of Lemma \ref{lem:global} that there is $C_0>0$ such that $0<h'(t)\leqslant C_0$ for all $t>0$. One can deduce that
\[
-c^*<H_n'(t)\leqslant C_0 \ \ \mbox{ for } t+t_n\ \mbox{ large and every } n\geqslant1.
\]
Define
\[
z=\frac{y}{H_n(t)},\ \ \ w_n(t,z)=v_n(t,y),
\]
and direct computations yield that
\[(w_n)_t =\frac{1}{H^2_n(t)}(w_n)_{zz}+\frac{c^*+zH'_n(t)}{H_n(t)}(w_n)_z- d  w_n+ f\Big(w_n\Big(t-\tau,\frac{H_n(t)z+c^*\tau}{H_n(t-\tau)}\Big)\Big)\]
for $\frac{k_n(t)}{H_n(t)} <z<1$, $t>\tau-t_n$, and
\[
w_n(t,1)=0,\ \ H_n'(t)=-\mu \frac{(w_n)_z(t,1)}{H_n(t)}-c^*,\ \ t>\tau-t_n.
\]
Since $w_n\leqslant u^*$, then $f\Big(w_n\Big(t-\tau,\frac{H_n(t)z+c^*\tau}{H_n(t-\tau)}\Big)\Big)$ is bounded. For any given
$Z>0$ and $T_0\in\R$, using the partial interior-boundary $L^p$ estimates and the Sobolev embedding theorem (see \cite{DMZ2, Fr}),
for any $\nu'\in(0,1)$, we obtain
\[
\|w_n\|_{C^{\frac{1+\nu'}{2},1+ \nu'}([T_0,\infty)\times[-Z,1])}\leqslant C_Z\ \ \mbox{ for all large } n,
\]
where $C_Z$ is a positive constant depending on $Z$ and $\nu'$ but independent of $n$ and $T_0$. Thanks to this, we have
\[
\|H_n\|_{C^{1+\frac{\nu'}{2}}([T_0,\infty))}\leqslant C_1\ \ \mbox{ for all large } n,
\]
with $C_1$ is a positive constant independent of $n$ and $T_0$.  Hence by passing to a subsequence we may assume that as $n\to\infty$,
\[
w_n\to W\ \ \mbox{ in } C_{loc}^{\frac{1+\nu}{2},1+ \nu}(\R\times(-\infty,1]),\ \ \ H_n\to H\ \ \mbox{ in } C_{loc}^{1+\frac{\nu}{2}}(\R),
\]
where $\nu\in(0,\nu')$. Based on above results, we can see that $(W,H)$ satisfies that
\[
\left\{
\begin{array}{ll}
 W_t =\frac{W_{zz}}{H^2(t)}+\frac{c^*+zH'(t)}{H(t)}W_{z}- d  W+ f(W(t-\tau,H(t)z+c^*\tau)),  & (t,z)\in(-\infty,1]\times\R,\\
 W (t, 1)= 0,\ \ \ \ H'(t)=-\mu \frac{W_{z} (t,1)}{H(t)}-c^*,  &  t\in\R.
\end{array}
\right.
\]
Define $V(t,y)=W\big(t, \frac{y}{H(t)}\big)$. It is easy to check that $(V,H)$ satisfies \eqref{VGHQ} and \eqref{vhgt1} holds.
\end{proof}

Later, we show by a sequence of lemmas that $H(t)\equiv H_0$ is a constant and hence
\[
V(t,y)=q_{c^*}(H_0-y).
\]

Since $C\leqslant h(t)-c^*t+2C\leqslant 3C$ for all $t\geqslant0$, then $C\leqslant H(t)\leqslant 3C$ for $t\in\R$. Denote
\[
\phi(z):=q_{c^*}(-z)\ \ \mbox{ for } z\in\R,
\]
it follows from the proof of Proposition \ref{pro:sigma01} that for $x\in[(c-c^*)(t+t_n),H_n(t)]$ and $t+t_n$ large,
\[
 \big(1-Me^{-\beta^* (t+t_n)}\big)\phi(y-C)\leqslant v_n(t,y)\leqslant \min\Big\{\big(1+M'e^{-\beta^* (t+t_n)}\big)\phi(y-3C),\ u^*\Big\}.
\]
Letting $n\to\infty$ we have
\[
\phi(y-C)\leqslant V(t,y)\leqslant  \phi(y-3C) \ \ \ \mbox{for all } t\in\R,\ y<H(t).
\]
Define
\[
X^*:=\inf\{X:\ V(t,y)\leqslant  \phi(y-X)\ \ \mbox{ for all } (t,y)\in D\}
\]
and
\[
X_*:=\sup\{X:\ V(t,y)\geqslant  \phi(y-X)\ \ \mbox{ for all } (t,y)\in D\}
\]
Then
\[\phi(y-X_*)\leqslant V(t,y)\leqslant  \phi(y-X^*)\ \ \mbox{ for all } (t,y)\in D,\]
and
\[
C\leqslant X_*\leqslant\inf_{t\in\R} H(t)\leqslant \sup_{t\in\R} H(t)\leqslant X^*\leqslant 3C.
\]
By a similar argument as in \cite{DMZ2}, we have the following result.
\begin{lem}\label{limn2}
$X^*=\sup_{t\in\R} H(t)$, $X_*=\inf_{t\in\R} H(t)$, and there exist two sequences $\{s_n\}$, $\{\tilde{s}_n\}\subset \R$
such that
\[
H(t+s_n)\to X^*,\ \ V(t+s_n,y)\to \phi(y-X^*)\ \ \mbox{ as } n\to\infty
\]
uniformly for $(t,y)$ in compact subsets of $\R\times(-\infty,X^*]$, and
\[
H(t+\tilde{s}_n)\to X_*,\ \ V(t+\tilde{s}_n,y)\to \phi(y-X_*)\ \ \mbox{ as } n\to\infty
\]
uniformly for $(t,y)$ in compact subsets of $\R\times(-\infty, X_*]$.
\end{lem}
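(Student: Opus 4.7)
The plan is to extract sequences $s_n, \tilde s_n$ at which $H$ approaches $\sup_t H(t)$ and $\inf_t H(t)$, pass to global subsequential limits via the compactness of Lemma \ref{limn1}, and then force the limit profiles to coincide with $\phi(\cdot - X^*)$ and $\phi(\cdot - X_*)$ through a Hopf boundary-point argument. First, $\sup_t H(t) \leqslant X^*$ is immediate: the pointwise bound $V(t,y) \leqslant \phi(y - X^*)$ combined with positivity of $V$ on the interior $\{g_1(t) < y < H(t)\}$ forces $\phi(y - X^*) > 0$ just below $y = H(t)$, hence $H(t) < X^*$; symmetrically $\inf_t H(t) \geqslant X_*$. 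Pick $s_n \in \R$ with $H(s_n) \to \sup_t H(t) =: \bar H$. Applying Lemma \ref{limn1} to $V(\cdot + s_n, \cdot)$ and $H(\cdot + s_n)$ and extracting a subsequence yields a global limit $(V_\infty, H_\infty)$ solving \eqref{VGHQ} on $\R$, with $H_\infty(0) = \bar H$, $H_\infty \leqslant \bar H$ throughout, and $V_\infty \leqslant \phi(\cdot - X^*)$. Since $H_\infty$ attains its maximum at $t = 0$, $H_\infty'(0) = 0$, and the free-boundary condition in \eqref{VGHQ} then gives $V_{\infty, y}(0, \bar H) = -c^*/\mu$.

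The heart of the argument is Hopf's boundary-point lemma applied to $W := \phi(\cdot - X^*) - V_\infty \geqslant 0$ on $\{y \leqslant H_\infty(t)\}$. Using that $\phi(y - X^*) = q_{c^*}(X^* - y)$ is a stationary solution of the PDE in \eqref{VGHQ} (a direct consequence of \eqref{semiwave112}) and that $f$ is nondecreasing on $[0, u^*]$, the mean-value theorem produces a bounded nonnegative coefficient $A$ with $W_t - W_{yy} - c^* W_y + dW = A\, W(t - \tau, y + c^*\tau) \geqslant 0$, making $W$ a supersolution of a uniformly parabolic operator. Assuming $\bar H = X^*$, we have $W(0, \bar H) = \phi(0) = 0$, so Hopf forces $W_y(0, \bar H) < 0$ strictly unless $W \equiv 0$. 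But $\phi'(0) = -q_{c^*}'(0)$ together with the computed value $V_{\infty, y}(0, \bar H) = -c^*/\mu$ and the identity $\mu q_{c^*}'(0) = c^*$ from Theorem \ref{waves} gives $W_y(0, \bar H) = -q_{c^*}'(0) + c^*/\mu = 0$, a contradiction. Hence $W \equiv 0$, i.e., $V_\infty \equiv \phi(\cdot - X^*)$, whence $V_\infty(t, H_\infty(t)) = 0 = \phi(H_\infty(t) - X^*)$ pins down $H_\infty \equiv X^*$.

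The main obstacle is excluding the a priori case $\bar H < X^*$, for which $W(0, \bar H) = \phi(\bar H - X^*) > 0$ and the Hopf argument yields no contradiction. I plan to rule this out by a sliding argument: in that scenario the uniform positive gap $\phi(H(t) - X^*) \geqslant \phi(\bar H - X^*) > 0$ along the free boundary, combined with the exponential convergence $V(t, y), \phi(y - X^*) \to u^*$ as $y \to -\infty$ uniformly in $t$ (via Propositions \ref{pro:sigma01}(ii) and \ref{pro:sigma12}(ii) together with the asymptotics of $q_{c^*}$ at $+\infty$, whose decay rate is controlled by the negative root of $\Delta_{c^*}(\lambda, \tau) = 0$ from Lemma \ref{lem:eigen}), allows one to shift the profile slightly leftward to conclude $V(t, y) \leqslant \phi(y - (X^* - \delta))$ throughout $D$ for some $\delta > 0$, contradicting the infimum definition of $X^*$. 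The assertions for $X_*$ are established symmetrically: choose $\tilde s_n$ with $H(\tilde s_n) \to \inf_t H(t) =: \underline H$, obtain a limit whose free boundary attains its minimum at $t = 0$, and rerun the same Hopf calculation at that corner with $W^* := V_* - \phi(\cdot - X_*) \geqslant 0$ to force $V_* \equiv \phi(\cdot - X_*)$ and $\underline H = X_*$.
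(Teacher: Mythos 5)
The skeleton of your argument — translate $H, V$ by a sequence achieving the supremum, pass to an entire-in-time limit $(V_\infty, H_\infty)$ by the compactness of Lemma~\ref{limn1}, and use a free-boundary Hopf lemma at the time where $H_\infty$ attains its maximum — is the same as in the cited reference \cite{DMZ2}, and your Hopf calculation in the case $\bar H = X^*$ is essentially correct: the monotonicity of $f$ turns the delayed nonlocal term into a nonnegative forcing, and the coincidence $V_{\infty,y}(0,\bar H) = -c^*/\mu = \phi'(0)$ forces $W\equiv 0$ by the strong maximum principle plus Hopf.

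However, the exclusion of the case $\bar H<X^*$ is where the entire difficulty of the lemma sits, and your sketch does not close it. Shifting from $X^*$ to $X^*-\delta$ changes the tail $u^*-\phi(y-X)$ only by a multiplicative factor while leaving the exponential rate $e^{-k^*y}$ unchanged, and the \emph{a priori} bound $\phi(\cdot-X_*)\leqslant V\leqslant\phi(\cdot-X^*)$ only pins $u^*-V$ between two multiples of that same exponential. To make the slide work one needs a strict separation of the leading coefficients, i.e.\ a uniform strict inequality $u^*-V\geqslant (1+\delta')\bigl(u^*-\phi(\cdot-X^*)\bigr)$ near $y=-\infty$, which does not follow from the strong maximum principle alone (that only gives $W>0$ pointwise, not bounded below relative to the exponential weight). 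This is precisely the "touching at infinity" obstruction that makes the proof in \cite{DMZ2} technical, typically requiring a barrier built from the linearization of the equation at $u^*$. A side issue: the characteristic function $\Delta_{c^*}(\lambda,\tau)$ of Lemma~\ref{lem:eigen} governs the decay of $q_{c^*}$ near $0$, not near $u^*$; the relevant equation for the tail at $+\infty$ is $r^2-c^*r-d+f'(u^*)e^{-rc^*\tau}=0$, so the reference in your sketch is to the wrong linearization. Until the tail comparison is carried out quantitatively, the proof has a genuine gap at its central step.
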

Based on Lemma \ref{limn2}, we have the following lemma.
\begin{lem}\label{limn21}
$X^*=X_*$, and hence $H(t)\equiv H_0$ is a constant, which yields $V(t,y)=\phi (y-H_0)$.
\end{lem}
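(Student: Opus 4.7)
The plan is a contradiction argument built on the strong maximum principle for a suitable linear parabolic operator together with the limit structure furnished by Lemmas~\ref{limn1}--\ref{limn2}.

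Assume for contradiction that $X_*<X^*$. Introduce the non-negative difference
\[
w(t,y):=V(t,y)-\phi(y-X_*),\qquad (t,y)\in\R\times(-\infty,H(t)].
\]
Since $V\geqslant \phi(\cdot-X_*)$ and $f$ is monotonically increasing on $[0,u^*]$ by {\bf(H)}, subtracting the profile equation for the stationary solution $\phi(\cdot-X_*)$ from \eqref{VGHQ} yields that $w$ is a non-negative supersolution of the linear parabolic operator
\[
L[w]:=w_t-w_{yy}-c^*w_y+d\,w.
\]
Moreover $w$ vanishes at the free boundary $y=H(t)\geqslant X_*$.

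By Lemma~\ref{limn2}, $w(t+\tilde s_n,y)\to 0$ locally uniformly on $\R\times(-\infty,X_*]$, while for every fixed $y_0<X_*$ the other sequence yields
\[
w(t+s_n,y_0)\longrightarrow \phi(y_0-X^*)-\phi(y_0-X_*)>0,
\]
since $\phi$ is strictly decreasing where its argument is negative. I next split into two cases according to the time-gap $\tilde s_n-s_n$. If some subsequence of $\tilde s_n-s_n$ stays bounded, I extract a joint limit of $(V(\cdot+s_n,\cdot),H(\cdot+s_n))$ via Lemma~\ref{limn1} to obtain an entire solution of \eqref{VGHQ} whose history on $[-\tau,0]$ coincides with $\phi(\cdot-X^*)$ but which equals $\phi(\cdot-X_*)$ at a later bounded time. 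Forward uniqueness for the delayed free-boundary problem (Theorem~\ref{wellposedness}) then forces $\phi(\cdot-X^*)=\phi(\cdot-X_*)$, i.e.\ $X_*=X^*$, a contradiction.

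If instead $\tilde s_n-s_n\to\infty$, I would use the strong maximum principle and the Hopf boundary lemma for $L$ to upgrade the local positivity $w(s_n,y_0)\geqslant \tfrac{1}{2}(\phi(y_0-X^*)-\phi(y_0-X_*))$ to a uniform-in-time lower bound $w(t,y_0)\geqslant c>0$ for all sufficiently large $t$, via comparison with a solution of $L[v]=0$ that stays bounded below on a growing forward parabolic cylinder. This directly contradicts $w(\tilde s_n,y_0)\to 0$. Either way, one concludes $X_*=X^*=:H_0$, whence $H(t)\equiv H_0$ and, by the uniqueness part of Proposition~\ref{prop:semiwave}, $V(t,y)=\phi(y-H_0)$.

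The main obstacle is the unbounded-gap case: the linear parabolic lower bound has to be made time-uniform, and the delay term $f(V(t-\tau,y+c^*\tau))-f(\phi(y+c^*\tau-X_*))$ has to be absorbed carefully. Here the fact that $V$ itself remains bounded below by $\phi(\cdot-X_*)$, which is strictly positive on $(-\infty,X_*)$, is the key to sustaining positivity of $w$ under the iterated delayed evolution between consecutive indices of the two sequences.
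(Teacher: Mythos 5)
Your contradiction setup is sound, and the first half of the argument (the bounded-gap case) works: if $\tilde s_n-s_n$ stays bounded one can pass to the limit along both sequences and directly read off $\phi(\cdot-X^*)=\phi(\cdot-X_*)$ from Lemma~\ref{limn2}, since both limits are time-independent. But that is the easy case, and the unbounded-gap case is where the proof actually lives, and there your argument has a genuine gap.

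Concretely, from $L[w]\geqslant 0$ with $L[w]=w_t-w_{yy}-c^*w_y+d\,w$ and $w\geqslant 0$ you cannot extract a uniform-in-time lower bound $w(t,y_0)\geqslant c>0$. The issue is structural: the zeroth-order coefficient $+d$ in $L$ is \emph{positive}, so nonnegative solutions of $L[v]=0$ decay like $e^{-dt}$; there is no comparison function of the kind you invoke that "stays bounded below on a growing forward parabolic cylinder." More precisely, using the mean value theorem the inequality $L[w]\geqslant 0$ is equivalent to a delayed linear inequality
\begin{equation*}
w_t-w_{yy}-c^*w_y+d\,w\;\geqslant\;a(t,y)\,w(t-\tau,y+c^*\tau),\qquad a\geqslant 0,
\end{equation*}
and this inequality certainly \emph{permits} $w$ to decay to zero as $t\to\infty$; indeed that is exactly what happens along the sequence $\tilde s_n$, so no amount of maximum-principle persistence can exclude it. Moreover your linear analysis never touches the Stefan free boundary condition, which is where the dynamics of $H(t)$ is governed; a pure interior maximum-principle estimate on $w$ cannot control how $H(t)$ drifts.

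The paper sidesteps the difficulty entirely by \emph{not} trying to compare the two times $s_n$ and $\tilde s_n$ directly. Instead it shows a one-sided stability statement: starting from the moment $\tilde s_n$ where $V$ is $\varepsilon$-close to $\phi(\cdot-X_*)$, one can plant an explicit supersolution of the full free-boundary problem (built from a translate of $q_{c^*}$ with a suitably tuned perturbation $1+N\varepsilon e^{-\beta_0 t}$ and drifting front $\bar G(t)$) that traps $H(t)\leqslant X_*+\epsilon$ for \emph{all} $t>\tilde s_n+\tau$, not just along a subsequence. A symmetric subsolution planted at $s_n$ gives $H(t)\geqslant X^*-\epsilon$ for all $t>s_n+\tau$. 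For $t$ large enough both bounds hold simultaneously and yield $X^*-X_*\leqslant 2\epsilon<X^*-X_*$, a contradiction — no information about the relative sizes of $s_n$ and $\tilde s_n$ is needed. To repair your proof you would essentially have to import this squeezing construction, at which point the linear-operator detour and the case split on $\tilde s_n-s_n$ become unnecessary.
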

\begin{proof}
Argue indirectly we may assume that $X_*<X^*$. Choose $\epsilon=(X^*-X_*)/4$. We will show next that there is
$T_\epsilon>0$ such that
\begin{equation}\label{HGH1}
H(t)-X^*\geqslant -\epsilon\ \ \mbox{ and }\ \  H(t)-X_*\leqslant \epsilon\ \ \mbox{ for } t\geqslant T_\epsilon,
\end{equation}
which implies that $X^*-X_*\leqslant 2\epsilon$. This contraction would complete the proof.

To complete the proof, we need to prove that for given $\epsilon=(X^*-X_*)/4$, there exist $n_1(\epsilon)$ and $n_2(\epsilon)$
such that
\[
H(t)-X^*\geqslant -\epsilon\ \ (\forall t\geqslant s_{n_1}),\ \ \ H(t)-X_*\leqslant \epsilon\ \ (\forall t\geqslant \tilde{s}_{n_2}).
\]
It follows from $\phi(y-X_*)\leqslant V(t,y)\leqslant \phi(y-X^*)$
that there exist $C_1>0$ and $\beta_1>0$ such that
\[
|u^*-V(t,y)|\leqslant C_1e^{\beta_1y}.
\]
By Lemma \ref{limn2}, for any $\varepsilon>0$, there exist $K>0$, $T>0$ such that for $\tilde{s}_n>T+\tau$ and $s\in[0,\tau]$,
\begin{equation}\label{UHG1}
\sup_{y\in(-\infty,K]}|V(\tilde{s}_n+s,y)-\phi(y-X^*)|<\varepsilon
\end{equation}
 Set $G(t)=H(t)+c^*t$ and $U(t,y)=V(t,y-c^*t)$, then $(W,G)$ satisfies
\begin{equation}\label{UGu}
\left\{
\begin{array}{ll}
 U_t =U_{yy}- d  U+ f(U(t-\tau,y)),  & t\in\R,\ y\leqslant G(t),\\
 U (t, G(t))= 0,\ \ \ \ G'(t)=-\mu U_y (t,G(t)),  &  t\in\R.
\end{array}
\right.
\end{equation}
It follows from Lemma \ref{limn2} and \eqref{UHG1} that there is $n_1=n_1(\varepsilon)$ such that for $n\geqslant n_1$,
\begin{eqnarray}
& H(\tilde{s}_n+s)\leqslant X_*+\varepsilon\ \ \mbox{ for } s\in[0,\tau], \label{GR1}\\
& V(\tilde{s}_n+s,y)\leqslant \phi(y-X_*-\varepsilon)+\varepsilon\ \ \mbox{ for } s\in[0,\tau],\ y\leqslant X_*. \label{UR1}
\end{eqnarray}

Thanks to {\bf (H)}, for $\beta_0\in(0,\beta^*)$ small with $\beta^*$ is given in the proof of Proposition \ref{pro:sigma01},
there is $\eta>0$ small such that
\begin{equation}\label{vuf1}
 d -f'(v)e^{\beta_0 \tau}\geqslant \beta_0\ \ \ \mbox{ for } v\in[u^*-\eta,u^*+\eta],
\end{equation}
and we can find $N>1$ independent of $\varepsilon$ satisfies
\[
 \phi(y-X_*-\varepsilon)+\varepsilon \leqslant \big(1+N\varepsilon e^{-\beta_0\tau}\big)\phi(y-X_*-N\varepsilon)\ \
 \mbox{ for } y\leqslant X_*+\varepsilon,
\]

Let us construct the following supersolution of problem \eqref{UGu}:
$$
\begin{array}{l}
\bar{G}(t):= X_*+N\varepsilon+c^*t+N\sigma\varepsilon\big(1-e^{-\beta_0(t-\tilde{s}_n)}\big),\\
\bar{U}(t,y):=\min\big\{\big(1+N\varepsilon e^{-\beta_0(t-\tilde{s}_n)}\big)\phi\big(y-\bar{G}(t)\big),\ u^*\big\}.
\end{array}
$$
Since $\lim_{y\to-\infty}\big(1+N\varepsilon e^{-\beta_0(t-\tilde{s}_n)}\big)\phi\big(y-\bar{G}(t)\big)>u^*$, then there is a
smooth function $\bar{K}(t)$ of $t\geqslant \tilde{s}_n$ such that $\bar{K}(t)\to-\infty$ as $t\to\infty$ and
$\big(1+N\varepsilon e^{-\beta_0(t-\tilde{s}_n)}\big)\phi\big(\bar{K}(t)-\bar{G}(t)\big)=u^*$. We will check that
$(\bar{U},\bar{K},\bar{G})$ is a supersolution for $t\geqslant \tilde{s}_n+\tau$ and $y\in[\bar{K}(t),\bar{G}(t)]$. We note that
\[
\bar{U}(t,y)=\big(1+N\varepsilon e^{-\beta_0(t-\tilde{s}_n)}\big)\phi\big(y-\bar{G}(t)\big)\ \mbox{ when }\ y\in[\bar{K}(t),\bar{G}(t)].
\]
Firstly, it follows from \eqref{GR1} that for $s\in[0,\tau]$,
\[
G(\tilde{s}_n+s)\leqslant X_*+\varepsilon+c^*(\tilde{s}_n+s)\leqslant
X_*+N\varepsilon+c^*(\tilde{s}_n+s)\leqslant \bar{G}(\tilde{s}_n+s).
\]
In view of \eqref{UR1}, we have
\begin{align*}
\bar{U}(\tilde{s}_n+s,y)&=\big(1+N\varepsilon e^{-\beta_0s}\big)\phi\big(y-\bar{G}(\tilde{s}_n+s)\big)\\
&\geqslant\big(1+N\varepsilon e^{-\beta_0\tau}\big)\phi\big(y-X_*-N\varepsilon-c^*(\tilde{s}_n+s)\big)\\
&\geqslant\phi\big(y-X_*-\varepsilon-c^*(\tilde{s}_n+s)\big)+\varepsilon\\
&\geqslant V\big(\tilde{s}_n+s,y-c^*(\tilde{s}_n+s)\big)=U(\tilde{s}_n+s,y ).
\end{align*}
for $s\in[0,\tau]$ and $y\leqslant G(\tilde{s}_n+s)$. By definition $\bar{U}(t,\bar{G}(t))=0$ and direct computation yields
\begin{eqnarray*}
-\mu \bar{U} _y(t,\bar{G}(t))& = & c^*\big(1+N\varepsilon e^{-\beta_0( t-\tilde{s}_n)}\big), \\
& < & c^*+N\varepsilon \sigma \beta_0 e^{-\beta_0 ( t-\tilde{s}_n)} = \bar{G}'(t),
\end{eqnarray*}
if we choose $\sigma$ with $\sigma\beta_0 > c^*$. Since $U\leqslant u^*$, it then follows from the definition of $\bar{K}(t)$ that
$\bar{U}(t,\bar{K}(t))=u^*\geqslant U(t,\bar{K}(t))$.

Finally, let us show
\begin{equation}\label{uhupper}
 \mathcal{N} [\bar{U}] := \bar{U}_t - \bar{U}_{yy} + d  \bar{U}-f(\bar{U}(t-\tau,y)) \geqslant 0,\quad y\in [\bar{K}(t), \bar{G}(t)],\ t>\tilde{s}_n+\tau.
\end{equation}
Put $z:=y-\bar{G}(t)$, $\zeta(t):=N\varepsilon e^{-\beta_0( t-\tilde{s}_n)}$ and $\phi_\tau:=\phi\big(y-\bar{G}(t-\tau)\big)$. It is easy to
compute that
\begin{align*}
\mathcal{N} [\bar{U}]&=\zeta\Big\{f(\phi_\tau)-\beta_0\phi-\sigma\beta_0(1+\zeta)\phi'-f'\big(\big(1+\theta_2\zeta e^{\beta_0\tau}\big)\phi_\tau\big)e^{\beta_0\tau}\phi_\tau\Big\}\ \ \ (\mbox{with } \theta_2\in(0,1))\\
&\geqslant\zeta\Big\{-\sigma\beta_0(1+\zeta)\phi'-\big[f'\big(\big(1+\theta_2\zeta e^{\beta_0\tau}\big)\phi_\tau\big)e^{\beta_0\tau}- d \big]\phi_\tau-\beta_0\phi\Big\}.
\end{align*}

Since
\[
\phi(z)\to u^*\ \mbox{ and } \frac{(\phi(z)-u^*)'}{\phi(z)-u^*}\to k^*\ \ \mbox{ as } z\to -\infty
\]
where $k^*:=c^*-\sqrt{(c^*)^2+4( d -f'(u^*))}<0$, there are two constants $z_\eta<0$  and $k_0$ such that
\begin{equation}\label{qqq122}
\phi''(z)>0,\ \ \ \phi(z)\geqslant u^*-\eta\ \ \mbox{ and }\ \ \phi'(z-2c^*\tau)
\geqslant  k_0 \phi'(z)  \ \mbox{ for } \ z<z_\eta,
\end{equation}
Moreover, we can compute that
\begin{align*}
\triangle \bar{G}(t) :& = \bar{G}(t)-\bar{G}(t-\tau) = c^*\tau+N\sigma \varepsilon e^{-\beta_0(t-\tilde{s}_n)}(e^{\beta_0\tau}-1).
\end{align*}
For any given $\sigma>0$, by shrinking $\varepsilon$ if necessary,  we have that
\begin{equation}\label{deh22}
\triangle \bar{G}(t)\in[c^*\tau,2c^*\tau]\ \ \mbox{ for }\ t > \tilde{s}_n+\tau.
\end{equation}

For $y-\bar{G}(t-\tau)\leqslant z_\eta$ and $t>\tilde{s}_n+\tau$, direct calculation implies
\begin{align*}
\mathcal{N} [\bar{U}]  &\geqslant\zeta\big\{-\sigma\beta_0(1+\zeta)\phi'-\big[f'\big(\big(1+\theta_2\zeta e^{\beta_0\tau}\big)\phi_\tau\big)e^{\beta_0\tau}- d \big]\phi_\tau-\beta_0\phi\big\}\\
 & \geqslant \zeta\big\{\big[d-f'\big(\big(1+\theta_2\zeta e^{\beta_0\tau}\big)\phi_\tau\big)e^{\beta_0\tau}-\beta_0\big]\phi_\tau-\sigma\beta_0\phi'+\beta_0(\phi_\tau-\phi)\big\}\\
&\geqslant \zeta\big[\beta_0 \phi'(y-\bar{G}(t)+\tilde{\theta}_2\triangle\bar{G}(t))\triangle\bar{G}(t)-\sigma\beta_0\phi'(y-\bar{G}(t))\big]\ \ \ (\mbox{with } \tilde{\theta}_2\in(0,1))\\
& \geqslant \zeta (2k_0c^*\tau-\sigma) \beta_0\phi'(y-\bar{G}(t)) \geqslant 0
\end{align*}
provided that $\sigma$ is sufficiently large, and we have used $\big(1+ \theta_2 \zeta e^{\beta_0\tau}\big)\phi_\tau\in[u^*-\eta,u^*+\eta]$
for $t>\tilde{s}_n+\tau$, \eqref{vuf1}, $\phi'(z)\leqslant0$ for $z\leqslant z_\eta$, \eqref{qqq122} and \eqref{deh22}.

When $z_\eta\leqslant y-\bar{G}(t-\tau)\leqslant 0$ and $t>\tilde{s}_n+\tau$, for sufficiently large $\sigma$, we have
\[
\mathcal{N} [\bar{U}]\geqslant\zeta \big[-\sigma\beta_0 C_z-u^*e^{\beta_0\tau}C_f -\beta_0u^*  \big]\geqslant0.
\]
where $C_z:=\max_{z\in[0,z_\eta+2c^*\tau]}\phi'(z)<0$, $C_f:=\max_{v\in[0,2u^*]}f'(v)$, and \eqref{deh22} are used.

Thus \eqref{uhupper} holds, then we can apply the comparison principle to conclude that
\[
U(t,y)\leqslant \bar{U}(t,y),\ \ \ G(t)\leqslant \bar{G}(t)\ \ \mbox{ for }\  y\in[\bar{K}(t),\bar{G}(t)]\ \mbox{and } t>\tilde{s}_n+\tau.
\]
This, together with the definition of $H(t)$, yields that
$H(t)\leqslant X_*+N\varepsilon(1+\sigma)$ for $t>\tilde{s}_n+\tau$.
By shrinking $\varepsilon$ if necessary, we obtain
\begin{equation}\label{Hsu1}
H(t)\leqslant X_*+\epsilon\ \ \ \mbox{ for }\ t>\tilde{s}_n+\tau\ \mbox{and } n>n_1.
\end{equation}

In the following, we show $H(t)\geqslant X^*-\epsilon$ for all large $t$. As in the construction of supersolution, for any $\varepsilon>0$,
there exists $n_2=n_2(\varepsilon)$ such that, for $n\geqslant n_2$,
\begin{eqnarray}
& H(s_n+s)\geqslant X^*-\varepsilon\ \ \mbox{ for } s\in[0,\tau], \label{GRH1}\\
& V(s_n+s,y)\geqslant \phi(y-X^*+\varepsilon)-\varepsilon\ \ \mbox{ for } s\in[0,\tau],\ y\leqslant X^*-\varepsilon. \label{URH1}
\end{eqnarray}
We also can find $N_0>1$ independent of $\varepsilon$ such that
\[
 \phi(y-X^*+\varepsilon)-\varepsilon \geqslant (1-N_0\varepsilon e^{-\beta_0\tau})\phi(y-X^*+N_0\varepsilon)\ \ \mbox{ for } y\leqslant X^*-\varepsilon,
\]
We can define a subsolution as follows:
$$
\begin{array}{l}
\underline{G}(t):= X^*-N_0\varepsilon+c^*t-N_0\sigma\varepsilon\big(1-e^{-\beta_0(t-s_n)}\big),\\
\underline{U}(t,y):=\big(1-N_0\varepsilon e^{-\beta_0(t-s_n)}\big)\phi\big(y-\underline{G}(t)\big).
\end{array}
$$
Since $U(t,y)\geqslant\phi(y-X_*)$, there are $C_0$ and $\alpha>0$ such that $V(t,y)\geqslant u^*-C_0e^{\alpha y}$
for all $y\leqslant0$, which implies that $U(t,y)\geqslant u^*-C_0e^{\alpha (y-c^*t)}$.
Let us fix $c\in(0,c^*)$ such that $\beta_0\leqslant \alpha(c+c^*)$. By enlarging $n$ if necessary we may assume that
$C_0\leqslant u^*N_0\varepsilon e^{\beta_0 s_n}$. Denote $\underline{K}(t)\equiv-ct$.

By a similar argument as above and in Step 2 of Proposition \ref{pro:sigma01}, we can show that
$(\underline{U},\underline{G},\underline{K})$ is a subsolution of problem \eqref{UGu} by taking  $\sigma>0$ sufficiently large.
The comparison principle can be used to conclude that
\[
\underline{U}(t,y)\leqslant U(t,y),\ \ \ \ \underline{G}\leqslant G(t)\ \ \ \mbox{ for } t\geqslant s_n+\tau,\ y\in[-ct,\underline{G}(t)],
\]
which implies that $G(t)\geqslant X^*-N_0\varepsilon(1+\sigma)$ for $t\geqslant s_n+\tau$.
By shrinking $\varepsilon$ if necessary, we have
\[
X^*-\epsilon\leqslant G(t)\ \ \ \mbox{ for } t\geqslant s_n+\tau\ \mbox{ and }\ n\geqslant n_2.
\]
This completes the proof of this lemma.
\end{proof}
\begin{thm}\label{thm:WHG}
Assume that {\bf (H)} and spreading happens.
Then there exists $H_1\in\R$  such that
\begin{equation}\label{HWt1}
\lim_{t\to\infty}[h(t) - c^*t ]= H_1,\ \ \ \ \ \lim_{t\to\infty} h'(t) =c^*,
\end{equation}
\begin{equation}\label{WHt1}
\lim\limits_{t\to\infty} \| u(t,\cdot)- q_{c^*}(c^*t+H_1-\cdot)\| _{L^\infty ( [0, h(t)])}=0,
\end{equation}
where $(c^*, q_{c^*})$ be the unique solution of \eqref{sw11}.
\end{thm}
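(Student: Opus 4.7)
The strategy is to show that $h_1(t) := h(t) - c^*t + 2C$ converges to a constant as $t \to \infty$; the rest of the theorem then follows quickly by translation and by combining the previous convergence statements. By Lemmas \ref{limn1}--\ref{limn21}, for every sequence $t_n \to \infty$ a subsequence can be extracted along which $h_1(\cdot + t_n) \to H_0$ in $C^{1+\nu/2}_{\mathrm{loc}}(\R)$ and $u_1(\cdot + t_n, \cdot) \to q_{c^*}(H_0 - \cdot)$ in $C^{(1+\nu)/2,\,1+\nu}_{\mathrm{loc}}$, where $H_0$ is a constant which a priori depends on the subsequence. The main task is to prove that $H_0$ is actually the same for every such sequence.

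Set $H^* := \limsup_{t \to \infty} h_1(t)$ and $H_* := \liminf_{t \to \infty} h_1(t)$; both are finite by Proposition \ref{pro:sigma01}(i). Suppose for contradiction that $H^* > H_*$. Pick $\tilde t_n \to \infty$ with $h_1(\tilde t_n) \to H_*$, and pass to a subsequence so that $u_1(\tilde t_n + s, y) \to q_{c^*}(H_* - y)$ locally uniformly on $[0,\tau] \times (-\infty, H_*]$. Fix a small $\epsilon > 0$. Using this local uniform convergence together with the global tail bound $|u_1(t,y) - u^*| \leq u^*\epsilon$ for $y \leq h_1(t) - Z_\epsilon$ supplied by Proposition \ref{pro:sigma01}(ii), we obtain for all $n$ sufficiently large the initial-data comparisons
\[
h_1(\tilde t_n + s) \leq H_* + \epsilon, \qquad u_1(\tilde t_n + s, y) \leq q_{c^*}(H_* + \epsilon - y) + \epsilon,
\]
valid for $s \in [0,\tau]$ and $y \leq h_1(\tilde t_n + s)$. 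With these in hand, I run the supersolution construction from the proof of Lemma \ref{limn21} verbatim (with $X_*$ replaced by $H_*$ and $\tilde s_n$ by $\tilde t_n$), now applied to the actual solution rather than to the limit $V$. This yields a supersolution $(\bar U, \bar G)$ of \eqref{p} whose moving-frame free boundary $\bar H_1(t) := \bar G(t) - c^*t + 2C$ satisfies $\bar H_1(t) \leq H_* + N\epsilon(1+\sigma)$ with $N,\sigma$ independent of $\epsilon$. The comparison principle (Lemma \ref{lem:comp1}) then gives $h_1(t) \leq H_* + N\epsilon(1+\sigma)$ for every $t > \tilde t_n + \tau$; taking $\limsup_{t\to\infty}$ and then $\epsilon \downarrow 0$ yields $H^* \leq H_*$, the desired contradiction.

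Denote the common value by $H_0$; then $h_1(t) \to H_0$ and $h(t) - c^*t \to H_0 - 2C =: H_1$. The derivative limit $h'(t) = h_1'(t) + c^* \to c^*$ is immediate, since the $C^{1+\nu/2}_{\mathrm{loc}}$ convergence of Lemma \ref{limn1}, now forcing the limit constant to equal the unique $H_0$ along every subsequence, implies $h_1'(\cdot + t_n) \to 0$ locally uniformly, and hence $h_1'(t) \to 0$. For the uniform profile convergence \eqref{WHt1}, I rewrite the $L^\infty$ norm in the moving frame as $\sup_{y \in [-c^*t + 2C,\, h_1(t)]} |u_1(t,y) - q_{c^*}(H_0 - y)|$ and split the $y$-range into three parts: on any fixed compact interval around $H_0$, the uniqueness of $H_0$ upgrades the subsequential locally uniform convergence to full locally uniform convergence; near the free boundary, $h_1(t) \to H_0$ and the vanishing of both $u_1$ and $q_{c^*}$ at the frontier close the argument by continuity; in the far-left tail, Proposition \ref{pro:sigma01}(ii) makes $u_1$ close to $u^*$ while $q_{c^*}(H_0 - y) \to u^*$ as $y \to -\infty$, so the two estimates combine.

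I expect the main obstacle to be the contradiction step: adapting the supersolution construction of Lemma \ref{limn21} so that its initial-time comparisons are genuinely satisfied by the actual solution $u_1$ at $\tilde t_n$ (rather than by an idealized limit $V$ on its own asymptotic domain). This relies on matching local uniform convergence along $\tilde t_n$ with the pointwise tail estimate from Proposition \ref{pro:sigma01}(ii) to dominate $u_1(\tilde t_n + s, \cdot)$ by the shifted semi-wave $q_{c^*}(H_* + \epsilon - \cdot) + \epsilon$ over the whole existence interval $[g_1(\tilde t_n + s), h_1(\tilde t_n + s)]$.
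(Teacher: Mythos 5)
Your proposal is correct and follows the same overall strategy as the paper: extract subsequential limits via Lemma~\ref{limn1}, classify them as translated semi-waves via Lemmas~\ref{limn2} and~\ref{limn21}, and combine the resulting near-front convergence with the far-field estimate of Proposition~\ref{pro:sigma01}(ii). Where you genuinely sharpen the argument is in the uniqueness of the front constant $H_0$. The paper passes from ``for any $t_n\to\infty$, along a subsequence $h_1(\cdot+t_n)\to H_0$ (a constant)'' directly to ``$h_1(t)\to H_1$'' by citing the arbitrariness of $\{t_n\}$; but Lemmas~\ref{limn1}--\ref{limn21} alone only show that \emph{each} subsequential limit is some constant, not that it is the \emph{same} constant for every subsequence (a slowly varying $h_1$ could oscillate yet have every such subsequential limit constant). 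You close this gap by running the supersolution of Lemma~\ref{limn21} against the actual solution $u_1$, started from a sequence of times realizing $\liminf h_1$, so that the comparison principle forces $\limsup_{t\to\infty}h_1\leqslant\liminf_{t\to\infty}h_1+O(\epsilon)$ and hence the full convergence. This is exactly how the ``arbitrariness'' step should be understood, and making it explicit is an improvement over the paper's terse phrasing.

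One technical caution on your initial-data comparison $u_1(\tilde t_n+s,y)\leqslant q_{c^*}(H_*+\epsilon-y)+\epsilon$: in the far-left range of $y$, do not rely on Proposition~\ref{pro:sigma01}(ii), which only gives $u_1\leqslant u^*(1+\epsilon)$ and would fail to be dominated by $q_{c^*}(H_*+\epsilon-y)+\epsilon\approx u^*+\epsilon$ unless $u^*\leqslant 1$. Instead invoke the a priori bound $u\leqslant u^*$, already established at the start of the proof of Lemma~\ref{lem:global} via comparison with the constant supersolution; since $q_{c^*}(H_*+\epsilon-y)>u^*-\epsilon$ for $y$ sufficiently negative, this bound suffices there, and the locally uniform subsequential convergence along $\tilde t_n$ covers the remaining compact $y$-range, after which the overlap is automatic because the domain terminates at $h_1(\tilde t_n+s)\leqslant H_*+\epsilon$. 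The remainder of your argument (the derivative limit from $C^{1+\nu/2}_{\mathrm{loc}}$ convergence and the three-region splitting of the $L^\infty$ norm) matches the paper's in substance; the paper happens to switch to the coordinate $z=x-h(t)$ for the near-front piece while you stay in $y=x-c^*t+2C$, but that is a cosmetic difference.
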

\begin{proof}
It follows from Lemmas \ref{limn1} and \ref{limn21} that for any $t_n\to\infty$, by passing to a subsequence,
$h(t+t_n)-c^*(t+t_n)\to H_1:=H_0-2C$ in $C_{loc}^{1+\frac{\nu}{2}}(\R)$. The arbitrariness of $\{t_n\}$ implies
that $h(t)-c^*t\to H_1$ and $h'(t)\to c^*$ as $t\to\infty$, which proves \eqref{HWt1}.

In what follows, we use the moving coordinate $z:= x-h(t)$ to prove \eqref{WHt1}. Set
$$
g_2(t) := g(t)-h(t), \ \ \ \ \ \ u_2(t,z) := u(t, z+h(t))\ \ \mbox{ for }\ z\in [g_2 (t), 0],\ t\geqslant \tau,
$$
\[
\tilde{g}_n(t)=g(t+t_n)-h(t+t_n),\ \ \ \tilde{h}_n(t)=h(t+t_n),\ \ \ \ \tilde{u}_n(t,z)=u_2(t+t_n,z),
\]
then the pair $(\tilde{u}_n, \tilde{g}_n,\tilde{h}_n)$ solves
\begin{equation}\label{p u2}
\left\{
\begin{array}{ll}
 (\tilde{u}_n)_t =(\tilde{u}_n)_{zz}+\tilde{h}_n'(\tilde{u}_n)_z+ f(\tilde{u}_n(t-\tau,z+\tilde{h}_n(t)-\tilde{h}_n(t-\tau))- d  \tilde{u}_n, &  z\in(\tilde{g}_n(t),0),\ t>\tau,\\
 {\tilde{u}_n}(t, z)= 0,\ \ {\tilde{g}'_n}(t)=- \mu (\tilde{u}_n)_z (t,z)-\tilde{h}_n'(t), &  z={\tilde{g}_n}(t),\ t>\tau,\\
 {\tilde{u}_n}(t, 0)= 0,\ \ \tilde{h}_n'(t) = -\mu (\tilde{u}_n)_z(t,0), &  t>\tau.
 \end{array}
\right.
\end{equation}
By the same reasoning as in the proof of Lemma \ref{limn1}, the parabolic regularity to \eqref{p u2} plus the Sobolev
embedding theorem can be used to conclude that, by passing to a further subsequence if necessary, as $n\to\infty$,
$\tilde{u}_n\to W$  in $\ C_{loc}^{\frac{1+\nu}{2},1+\nu}(\R\times(-\infty,0])$, and $W$ satisfies, in view of $\tilde{h}'_n(t)\to c^*$,
\[
\left\{
\begin{array}{ll}
 W_t =W_{zz}+c^*W_z- d  W+ f(W(t-\tau,z+c^*\tau)), &   -\infty<z<0,\ t\in \R,\\
 W (t, 0)= 0, \ c^*=-\mu W_z (t,0), &  t\in \R.
\end{array}
\right.
\]
This is equivalent to \eqref{VGHQ} with $V=W$ and $H=0$. Hence we can conclude
\[
W(t,z)\equiv \phi(z)\ \ \ \mbox{ for } (t,z)\in\R\times(-\infty,0].
\]
Thus we have proved that, as $n\to\infty$,
\[
u(t+t_n,z+h(t+t_n))-q_{c^*}(-z)\to0 \ \ \   \mbox{ in } C_{loc}^{\frac{1+\nu}{2},1+\nu}(R\times(-\infty,0]).
\]
This, together with the arbitrariness of $\{t_n\}$, yields that
\[
\lim_{t\to\infty}[u(t,z+h(t))-q_{c^*}(-z)]=0\ \ \mbox{ uniformly for } z \mbox{ in compact subsets of } (-\infty,0].
\]
Then, for any $L>0$,
\[
\|u(t,\cdot)-q_{c^*}(h(t)-\cdot)\|_{L^\infty ([h(t)-L,h(t)])}\to0 \ \quad \mbox{ as } t\to \infty.
\]
Using the limit $h(t)-c^*t\to H_1$ as $t\to\infty$  we obtain
\begin{equation}\label{u to U near h(t)}
\|u(t, \cdot) - q_{c^*}(c^*t+H_1 -\cdot)\|_{L^\infty ([h(t)-L,h(t)])} \to 0\ \quad \mbox{ as } t\to \infty.
\end{equation}

Finally we prove \eqref{WHt1}. For any given small $\varepsilon >0$, it follows from \eqref{ughu1} in Proposition \ref{pro:sigma01}
that there exist two positive constants $Z_\varepsilon$ and $T_\varepsilon$ such that
$$
|u(t,x) - u^*| \leqslant u^*\varepsilon \ \quad \mbox{ for }\ 0\leqslant x\leqslant h(t) - Z_\varepsilon,\ t>T_\varepsilon.
$$
Since $q_{c^*}(z)\to u^*$ as $z\to\infty$, there exists $Z^*_\varepsilon > Z_\varepsilon$ such that
$$
|q_{c^*}(c^*t +H_1 -x) -u^*| \leqslant u^*\varepsilon\ \quad  \mbox{ for }\ x\leqslant c^*t+ 2H_1 -Z^*_\varepsilon.
$$
Taking $T^*_\varepsilon >T_\varepsilon$ large such that $h(t) <c^*t+2H_1$ for $t>T^*_\varepsilon$, then  we obtain
$$
|u(t,x) - q_{c^*}(c^*t +H_1 -x) | \leqslant 2u^*\varepsilon\ \quad \mbox{ for }\ 0\leqslant x\leqslant h(t)-Z^*_\varepsilon, \ t> T^*_\varepsilon.
$$
Taking $L= Z^*_\varepsilon$ in \eqref{u to U near h(t)} we see that for some $T^{**}_\varepsilon >T^*_\varepsilon$, we have
$$
|u(t, x) - q_{c^*}(c^*t +H_1 -x)| \leqslant u^*\varepsilon\  \quad \mbox{ for }\ h(t) -Z^*_\varepsilon \leqslant x \leqslant h(t),\ t>T^{**}_\varepsilon.
$$
This completes the proof of \eqref{WHt1}.
\end{proof}

Taking use of a similar argument as above one can obtain the following result.

\begin{thm}\label{thm:WGH}
Assume that {\bf (H)} and spreading happens. Then there exists $G_1\in\R$
such that
\begin{equation}\label{HWgt1}
\lim_{t\to\infty}[g(t) + c^*t ]= G_1,\ \ \ \ \ \lim_{t\to\infty} g'(t) =-c^*,
\end{equation}
\begin{equation}\label{WHtg1}
\lim\limits_{t\to\infty} \| u(t,\cdot)- q_{c^*}(c^*t-G_1+\cdot)\| _{L^\infty ( [g(t), 0])}=0,
\end{equation}
where $(c^*, q_{c^*})$ be the unique solution of \eqref{sw11}.
\end{thm}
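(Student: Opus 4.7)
The cleanest route is to exploit the spatial symmetry of \eqref{p} under the reflection $x\mapsto -x$ and reduce Theorem \ref{thm:WGH} directly to Theorem \ref{thm:WHG}, rather than reproduce the entire sliding/squeezing machinery of Lemmas \ref{limn1}--\ref{limn21} in the symmetric setting. The plan is as follows.

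Define $\tilde u(t,x):=u(t,-x)$, $\tilde g(t):=-h(t)$, $\tilde h(t):=-g(t)$, and $\tilde\phi(\theta,x):=\phi(\theta,-x)$. A direct computation shows that $(\tilde u,\tilde g,\tilde h)$ is a solution of \eqref{p} with initial datum $\tilde\phi$: the differential equation $u_t=u_{xx}-du+f(u(t-\tau,x))$ is invariant under $x\mapsto -x$, and since $\tilde u_x(t,\tilde x)=-u_x(t,-\tilde x)$, the two Stefan conditions $g'(t)=-\mu u_x(t,g(t))$ and $h'(t)=-\mu u_x(t,h(t))$ exchange roles under the reflection. The regularity conditions \eqref{def:X} and the compatibility condition \eqref{CC} are manifestly preserved by $x\mapsto -x$, and spreading occurs for $(\tilde u,\tilde g,\tilde h)$ if and only if it occurs for $(u,g,h)$.

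Applying Theorem \ref{thm:WHG} to $(\tilde u,\tilde g,\tilde h)$ yields a constant $\tilde H_1\in\R$ such that
\begin{align*}
\lim_{t\to\infty}[\tilde h(t)-c^*t]=\tilde H_1,\ \ \lim_{t\to\infty}\tilde h'(t)=c^*,\ \ \lim_{t\to\infty}\|\tilde u(t,\cdot)-q_{c^*}(c^*t+\tilde H_1-\cdot)\|_{L^\infty([0,\tilde h(t)])}=0.
\end{align*}
Set $G_1:=-\tilde H_1$. Because $\tilde h(t)=-g(t)$ and $\tilde h'(t)=-g'(t)$, the first two limits translate immediately to \eqref{HWgt1}. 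For \eqref{WHtg1}, the change of variable $y=-x$ converts a supremum over $[0,\tilde h(t)]=[0,-g(t)]$ into a supremum over $[g(t),0]$, and $c^*t+\tilde H_1-(-x)=c^*t-G_1+x$, so the profile limit becomes precisely \eqref{WHtg1}.

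Since all the heavy analysis has already been carried out in the proof of Theorem \ref{thm:WHG}, no genuine obstacle remains. The only step requiring a few lines of verification is the invariance of \eqref{p}, \eqref{def:X}, \eqref{CC} under $x\mapsto -x$ together with the sign computations for the two free-boundary conditions; no new estimate, sub/supersolution construction, or delayed-equation argument is needed.
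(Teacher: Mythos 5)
Your proposal is correct, and it takes a genuinely different and more efficient route than the paper. The paper's own treatment of Theorem \ref{thm:WGH} is a single sentence (``Taking use of a similar argument as above one can obtain the following result''), implicitly asking the reader to redo, mutatis mutandis, the entire chain Lemma~\ref{limn1} $\to$ Lemma~\ref{limn2} $\to$ Lemma~\ref{limn21} $\to$ Theorem~\ref{thm:WHG} with the roles of the left and right free boundaries interchanged. You instead exploit the reflection invariance of \eqref{p} under $x\mapsto -x$: the reflected triple $(\tilde u,\tilde g,\tilde h)=(u(t,-x),-h(t),-g(t))$ solves \eqref{p} because the PDE is autonomous in $x$, the vanishing boundary conditions swap, and the two Stefan conditions exchange roles with the correct sign (since $\tilde u_x(t,x)=-u_x(t,-x)$); the reflected initial datum satisfies \eqref{def:X} and \eqref{CC}; and spreading is preserved. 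Applying Theorem~\ref{thm:WHG} to $(\tilde u,\tilde g,\tilde h)$ and unwinding the reflection with $G_1=-\tilde H_1$, $\tilde h(t)=-g(t)$, $y=-x$ yields exactly \eqref{HWgt1} and \eqref{WHtg1}. This buys a shorter and more transparent proof --- the entire squeezing and sliding machinery is run once rather than twice --- at the modest cost of a few elementary lines verifying the symmetry, which you carry out correctly, including the sign bookkeeping in the free-boundary conditions that is the only place one could plausibly slip.
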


\smallskip

\noindent
{\bf Proof of Theorem \ref{thm:profile of spreading sol}}. The results in Theorem \ref{thm:profile of spreading sol}
follow from Theorems \ref{thm:WHG} and \ref{thm:WGH}.
\hfill $\square$

\end{document}